\documentclass[11pt,english]{article}
\usepackage[T1]{fontenc}
\usepackage[latin9]{inputenc}
\usepackage{geometry}
\geometry{verbose,tmargin=2cm,bmargin=2cm,lmargin=3cm,rmargin=3cm}
\setcounter{tocdepth}{2}
\usepackage{color}
\usepackage{babel}
\usepackage{array}
\usepackage{units}
\usepackage{amsmath}
\usepackage{amsthm}
\usepackage{amssymb}
\usepackage{stmaryrd}
\usepackage[all]{xy}
\usepackage[unicode=true,pdfusetitle,
 bookmarks=true,bookmarksnumbered=false,bookmarksopen=false,
 breaklinks=false,pdfborder={0 0 0},pdfborderstyle={},backref=false,colorlinks=true]
 {hyperref}

\makeatletter

\providecommand{\tabularnewline}{\\}

\theoremstyle{plain}
\newtheorem{thm}{\protect\theoremname}[section]
  \theoremstyle{plain}
  \newtheorem{conjecture}[thm]{\protect\conjecturename}
  \theoremstyle{plain}
  \newtheorem{cor}[thm]{\protect\corollaryname}
  \theoremstyle{remark}
  \newtheorem{rem}[thm]{\protect\remarkname}
  \theoremstyle{definition}
  \newtheorem{defn}[thm]{\protect\definitionname}
  \theoremstyle{plain}
  \newtheorem{lem}[thm]{\protect\lemmaname}
  \theoremstyle{plain}
  \newtheorem{prop}[thm]{\protect\propositionname}
  \theoremstyle{remark}
  \newtheorem{claim}[thm]{\protect\claimname}


\usepackage{appendix}
\usepackage[all]{xy}

\usepackage{marginnote}
\setlength{\marginparwidth}{1.9cm}

\newtheorem{notation}[thm]{Notation}

\makeatother

  \providecommand{\claimname}{Claim}
  \providecommand{\conjecturename}{Conjecture}
  \providecommand{\corollaryname}{Corollary}
  \providecommand{\definitionname}{Definition}
  \providecommand{\lemmaname}{Lemma}
  \providecommand{\propositionname}{Proposition}
  \providecommand{\remarkname}{Remark}
\providecommand{\theoremname}{Theorem}

\begin{document}
\global\long\def\rot{\mathrm{rot} }
\global\long\def\rotm{\overline{\mathrm{rot}} }
\global\long\def\alt{\mathrm{Alt} }
\global\long\def\p{{\cal P} }
\global\long\def\F{\mathrm{F} }
\global\long\def\Tr{\mathrm{Tr} }
\global\long\def\tr{\mathrm{tr} }
\global\long\def\soc{\mathrm{Soc} }

\title{The Markoff Group of Transformations\\
in Prime and Composite Moduli}

\author{Chen Meiri and Doron Puder\\
with an Appendix by Dan Carmon}
\maketitle
\begin{abstract}
The Markoff group of transformations is a group $\Gamma$ of affine
integral morphisms, which is known to act transitively on the set
of all positive integer solutions to the equation $x^{2}+y^{2}+z^{2}=xyz$.
The fundamental strong approximation conjecture for the Markoff equation
states that for every prime $p$, the group $\Gamma$ acts transitively
on the set $X^{*}\left(p\right)$ of non-zero solutions to the same
equation over $\nicefrac{\mathbb{Z}}{p\mathbb{Z}}$. Recently, Bourgain,
Gamburd and Sarnak proved this conjecture for all primes outside a
small exceptional set.

In the current paper, we study a group of permutations obtained by
the action of $\Gamma$ on $X^{*}\left(p\right)$, and show that for
most primes, it is the full symmetric or alternating group. We use
this result to deduce that $\Gamma$ acts transitively also on the
set of non-zero solutions in a big class of composite moduli. 

Our result is also related to a well-known theorem of Gilman and Evans,
stating that for any finite non-abelian simple group $G$ and $r\ge3$,
the group $\mathrm{Aut}\left(\F_{r}\right)$ acts on at least one
``$T_{r}$-system'' of $G$ as the alternating or symmetric group.
In this language, our main result translates to that for most primes
$p$, the group $\mathrm{Aut}\left(\F_{2}\right)$ acts on a particular
$T_{2}$-system of $\mathrm{PSL}\left(2,p\right)$ as the alternating
or symmetric group.
\end{abstract}
\tableofcontents{}

\section{Introduction\label{sec:Introduction}}

The Markoff surface $\mathbb{X}$ is the affine surface in $\mathbb{A}^{3}$
defined by the equation\footnote{Sometimes the Markoff equation is written as $x^{2}+y^{2}+z^{2}=3xyz$.
However, these two equations are equivalent in the sense that their
integer solutions are related bijectively by $\left(x,y,z\right)\longleftrightarrow\left(3x,3y,3z\right)$.
This bijection holds also for solutions in $\nicefrac{\mathbb{Z}}{p\mathbb{Z}}$
for every prime $p\neq3$.}
\begin{equation}
x^{2}+y^{2}+z^{2}=xyz.\label{eq:markoff}
\end{equation}
The Markoff triples ${\cal M}$\marginpar{${\cal M}$} is the set
of positive integer solutions to Equation (\ref{eq:markoff}), such
as $\left(3,3,3\right)$. The Markoff group of automorphisms of $\mathbb{X}$
is the group $\Gamma$\marginpar{$\Gamma$} generated by permutations
of the coordinates and the Vieta involutions $R_{1}$, $R_{2}$ and
$R_{3}$\marginpar{$R_{i}$} where $R_{3}\left(x,y,z\right)=\left(x,y,xy-z\right)$
and $R_{1}$ and $R_{2}$ are defined analogously. It is easy to see
that ${\cal M}$ is invariant under $\Gamma$ and Markoff proved that
$\Gamma$ acts transitively on ${\cal M}$ \cite{markoff1879formes,markoff1880formes}.
Let $\Delta$ be the group generated by $\Gamma$ and the involutions
that replace two of the coordinates by their negatives. Then the set
$\mathbb{X}\left(\mathbb{Z}\right)$ of integer solutions to (\ref{eq:markoff})
has two $\Delta$-orbits: $\left\{ \left(0,0,0\right)\right\} $ and
its complement $X^{*}\left(\mathbb{Z}\right)\stackrel{\mathrm{def}}{=}\mathbb{X}\left(\mathbb{Z}\right)\setminus\left\{ \left(0,0,0\right)\right\} $. 

\subsection*{Prime Moduli}

If $p$ is a prime number, then $\mathbb{X}\left(\nicefrac{\mathbb{\mathbb{Z}}}{p\mathbb{Z}}\right)$
is the finite set of solutions to (\ref{eq:markoff}) in $\nicefrac{\mathbb{Z}}{p\mathbb{Z}}$,
and we denote $X^{*}\left(p\right)=\mathbb{X}\left(\nicefrac{\mathbb{Z}}{p\mathbb{Z}}\right)\setminus\left\{ \left(0,0,0\right)\right\} $\marginpar{$X^{*}\left(p\right)$}.
The strong approximation conjecture for the Markoff equation (\ref{eq:markoff})
states that for every prime $p$, the reduction mod $p$ of the set
of Markoff triples ${\cal M}\to X^{*}\left(p\right)$ is onto. This
is clearly equivalent to $\Gamma$ acting transitively on $X^{*}\left(p\right)$.
Recently, Bourgain, Gamburd and Sarnak proved this conjecture for
all primes outside of a small exceptional set:
\begin{thm}[Bourgain-Gamburd-Sarnak \cite{BGS-I2016markoff}]
\label{thm:BGS}Let $E$ be the set of primes for which $\Gamma$
does not act transitively on $X^{*}\left(p\right)$. For any $\varepsilon>0$,
the number of primes $p\le T$ with $p\in E$ is at most $T^{\varepsilon}$,
for $T$ large enough.

Moreover, for any $\varepsilon>0$, the largest $\Gamma$-orbit in
$X^{*}\left(p\right)$ is of size at least $\left|X^{*}\left(p\right)\right|-p^{\varepsilon}$,
for $p$ large enough (whereas $\left|X^{*}\left(p\right)\right|\sim p^{2}$).
\end{thm}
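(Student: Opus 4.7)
The plan is to follow a \emph{cage method}: exploit the fiberwise conic structure of $\mathbb{X}$ and promote a spectral gap on $\mathrm{SL}_{2}(\mathbb{F}_{p})$ to an orbit-mixing statement for $\Gamma$. Fix a nonzero $z_{0}\in \mathbb{F}_{p}$ and consider the slice $C_{z_{0}}=\{(x,y)\colon x^{2}-z_{0}xy+y^{2}+z_{0}^{2}=0\}$. This is a plane conic whose split/non-split/degenerate type is governed by whether $z_{0}^{2}-4$ is a nonzero square in $\mathbb{F}_{p}$, and it has roughly $p\mp 1$ points accordingly. The Vieta involutions $R_{1}$ and $R_{2}$ preserve $C_{z_{0}}$ and act there as reflections, so $\rho_{z_{0}}:=R_{1}R_{2}$ is a torus element of $\mathrm{SL}_{2}$ whose matrix has trace $z_{0}^{2}-2$; its eigenvalues $\lambda(z_{0}),\lambda(z_{0})^{-1}$ depend algebraically on $z_{0}$. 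Any $\Gamma$-orbit that meets $C_{z_{0}}$ automatically contains a full $\langle \rho_{z_{0}}\rangle$-orbit.

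Next I would combine three families of rotations $\{\rho_{z_{0}}\},\{\rho_{y_{0}}\},\{\rho_{x_{0}}\}$ coming from the three coordinate slices, which are accessible because the $S_{3}$ of coordinate permutations sits inside $\Gamma$. The essential observation is that rotations about distinct axes fail to commute and Zariski-generate $\mathrm{SL}_{2}$; this is precisely the setting of the Bourgain-Gamburd expansion machine, which yields an absolute spectral gap for the Cayley graph of $\mathrm{SL}_{2}(\mathbb{F}_{p})$ on such a symmetric generating set, provided that (i) the generators have multiplicative order at least $p^{\delta}$ and (ii) they do not collapse into a proper subgroup (dihedral, Borel, or the exceptional $A_{4},S_{4},A_{5}$). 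Via a transfer operator / random walk argument on the fibration, this spectral gap converts into an $L^{2}$-mixing estimate for the $\Gamma$-action: every $\Gamma$-orbit meeting the bulk of a nondegenerate slice already covers that slice up to $O(p^{\varepsilon})$ exceptional points. Coordinate-permuting across the three slice families then glues these near-covers into a single $\Gamma$-orbit of size $|X^{*}(p)|-O(p^{\varepsilon})$, which is the \emph{moreover} clause.

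The first assertion $|E\cap [1,T]|\le T^{\varepsilon}$ reduces to counting those primes at which the Bourgain-Gamburd hypotheses fail. Subgroup-containment obstructions in (ii) impose only finitely many Frobenius conditions on $p$ and are therefore a density-zero nuisance; the binding constraint is the order hypothesis (i). The main obstacle, and the source of the $T^{\varepsilon}$ bound, is an effective Chebotarev-type estimate controlling the set of primes $p\le T$ at which the orders of the algebraic units $\lambda(z_{0})\in \overline{\mathbb{Q}}^{\times}$ in $\mathbb{F}_{p}^{\times}$ are anomalously small uniformly in the parameter $z_{0}$. Once the bulk orbit of size $|X^{*}(p)|-O(p^{\varepsilon})$ is obtained, uniqueness of the large orbit outside $E$ follows from a separate descent on the degenerate slices $z_{0}^{2}=4$ and $z_{0}=0$, whose $\Gamma$-action is elementary enough to analyze directly, giving the final density bound on $E$.
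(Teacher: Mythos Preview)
This theorem is not proved in the present paper: it is quoted from Bourgain--Gamburd--Sarnak \cite{BGS-I2016markoff} and used as a black box (see the explicit remark after Corollary~\ref{cor:transitive for composite with factors we handle}). So there is no ``paper's own proof'' to compare against; the paper only records from \cite{BGS-I2016markoff} the conic-section structure of the rotations $\rot_j$ (Lemmas~\ref{lem:rotations a la BGS p=00003D1(4)} and~\ref{lem:rotations a la BGS p=00003D3(4)}), not the transitivity argument itself.

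That said, your sketch captures several correct ingredients of the actual BGS argument---the fiberwise conic structure, the rotations as torus elements with eigenvalues $\frac{z_0\pm\sqrt{z_0^2-4}}{2}$, and the reduction of the exceptional-set bound to controlling primes at which these eigenvalues have anomalously small multiplicative order---but the mechanism you propose is not the one BGS use. Their proof is not a spectral-gap/$L^2$-mixing argument on a Cayley graph of $\mathrm{SL}_2(\mathbb{F}_p)$ via the Bourgain--Gamburd machine; rather, it is a three-stage combinatorial argument (``opening/middle/end game''). One first shows, using a result on multiplication tables and divisor distribution, that every $\Gamma$-orbit contains a point with a coordinate of near-maximal multiplicative order; the rotation about that coordinate then forces the orbit to have size $\gg p^{1-\varepsilon}$; finally one shows that all points with a maximal coordinate lie in a single orbit, and the parabolic slices $z_0=\pm 2$ (when $p\equiv 1\pmod 4$) are used to glue. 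Your transfer-operator step does not appear, and the quantitative input is divisor-theoretic (in the spirit of Ford's results invoked in Appendix~\ref{sec:Dan-Carmon}) rather than expander-theoretic.
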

Let $\Gamma_{p}$\marginpar{$\Gamma_{p}$} be the finite permutation
group induced by the action of $\Gamma$ on $X^{*}\left(p\right)$.
In the current work we study the nature of this group. The first step
here is to notice that $\Gamma_{p}$ preserves a block structure as
follows: 

For $\left(x,y,z\right)\in X^{*}\left(p\right)$ denote by $\left[x,y,z\right]$\marginpar{$\left[x,y,z\right]$}
the block of all solutions obtained from $\left(x,y,z\right)$ by
sign changes, so 
\[
\left[x,y,z\right]\stackrel{\mathrm{def}}{=}\left\{ \left(x,y,z\right),\left(x,-y,-z\right),\left(-x,y,-z\right),\left(-x,-y,z\right)\right\} .
\]
Then $\Gamma_{p}$ preserves this block structure. Let $Y^{*}\left(p\right)$\marginpar{$Y^{*}\left(p\right)$ }
denote the set of blocks in $X^{*}\left(p\right)$, and $Q_{p}$\marginpar{$Q_{p}$}
denote the permutation group induced by the action of $\Gamma$ (or
$\Gamma_{p}$) on $Y^{*}\left(p\right)$. Simulations suggest the
following conjecture:
\begin{conjecture}
\label{conj:always alt or sym}For every $p\ge5$, the permutation
group $Q_{p}$ is the full alternating or symmetric group.
\end{conjecture}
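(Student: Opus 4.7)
The plan is to establish three successive properties of the permutation group $Q_p$ acting on $Y^*(p)$: transitivity, primitivity, and containment of the alternating group.

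\emph{Transitivity} follows for all primes $p\notin E$ directly from Theorem \ref{thm:BGS}: since $\Gamma$ acts transitively on $X^*(p)$, the induced action on the quotient $Y^*(p)$ by the sign-change blocks $[x,y,z]$ is also transitive. For primes in the exceptional set $E$, one needs either direct verification (for small primes) or a finer argument showing that on each large $\Gamma$-orbit in $X^*(p)$ the induced action on blocks is already alternating or symmetric on that orbit, then ruling out small orbits by other means.

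For \emph{primitivity}, I would exploit the fibration structure underlying the Bourgain--Gamburd--Sarnak analysis. The product $\rho_3=R_1R_2$ preserves the $z$-coordinate and acts on each fiber conic $C_{z_0}=\{(x,y):x^2+y^2-xyz_0+z_0^2=0\}$ as an element of $\mathrm{PGL}_2(\mathbb{F}_p)$ via a rational parameterization. Together with the analogous rotations $\rho_1=R_2R_3$, $\rho_2=R_1R_3$ and the coordinate permutations, $\Gamma$ provides three transverse fibrations of $X^*(p)$. Any $Q_p$-invariant partition of $Y^*(p)$ has to be compatible with each of them; showing that the only such compatible partitions are trivial should yield primitivity. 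A concrete tactic is to use large powers of $\rho_i$ which restrict trivially to one fiber and act as a long cycle on another, forcing any supposed block to be either a singleton or a union of full fibers, and then rule out the latter using a different rotation.

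For the \emph{alternating/symmetric} step, I would appeal to Jordan's theorem: a primitive subgroup of $S_n$ containing a $q$-cycle with $q$ prime and $q\le n-3$ must contain $\mathrm{Alt}(n)$. The natural candidate is a suitable power $\rho_i^m$ which acts trivially on most conic fibers and as a $q$-cycle on one fiber, for $q$ a large prime divisor of $p\pm 1$ (depending on whether the fiber is a split or non-split conic). Since $|Y^*(p)|\sim p^2/4$, as long as one can find such a $q$ lying in the Jordan range, one concludes $Q_p\supseteq\mathrm{Alt}(Y^*(p))$. Whether $Q_p$ equals $\mathrm{Alt}$ or $\mathrm{Sym}$ is then decided by a parity computation: the sign of a single explicit generator, say a coordinate transposition or a Vieta involution, acting on $Y^*(p)$.

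The \emph{main obstacle} is uniformity across all primes $p\ge 5$. Primes in $E$ are not covered by Theorem \ref{thm:BGS} and demand a separate transitivity argument. More seriously, for primes $p$ such that both $p-1$ and $p+1$ are very smooth, or where the rotations happen to have small order on many fibers simultaneously, the Jordan-cycle step may fail to produce a cycle of prime length in the required range, so one would need to find such a cycle via iterated commutators of rotations with coordinate permutations, or via a structural tool other than Jordan's theorem. Reconciling the three steps uniformly for every $p\ge 5$ is where I expect the genuine difficulty to lie.
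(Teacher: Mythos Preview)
The statement you are attempting to prove is a \emph{conjecture} in the paper; the paper does not prove it in full generality, only for most primes (Theorems~\ref{thm:1 mod 4} and~\ref{thm:3 mod 4: alternating given specific conditions}). So there is no ``paper's own proof'' to compare against, and your proposal should be read as a strategy toward an open problem.

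That said, your outline essentially recovers the paper's argument for $p\equiv 1\pmod 4$: the parabolic conic $C_1(\pm 2)$ is a single $p$-cycle of $\rot_1$, all other cycle lengths are coprime to $p$, so a suitable power of $\rot_1$ is a genuine $p$-cycle, and Jordan's theorem finishes once primitivity is known. Primitivity in this case is also easy, because the parabolic conics for the three coordinates overlap and force any block containing one of them to be everything.

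The genuine gap is the case $p\equiv 3\pmod 4$. Here there are \emph{no} solutions with a parabolic coordinate $\pm 2$, so there is no $p$-cycle at all. Your proposed substitute---a prime $q\mid p\pm 1$ giving a $q$-cycle on one fiber---does not work as stated: for each prime $q\mid\frac{p-1}{2}$ there are $\frac{q-1}{2}$ distinct hyperbolic conics $C_1(\pm x)$ on which $\rot_1$ has cycles of length $q$, and each such conic contributes $\frac{p-1}{2q}$ of them (see Table~\ref{tab:3 mod 4}). Any power of $\rot_1$ that isolates the $q$-part therefore yields many disjoint $q$-cycles, not a single one, and Jordan's theorem does not apply. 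The paper circumvents this by proving primitivity via a delicate Weil-bound argument (requiring the extra hypothesis on the order of $\frac{3+\sqrt{5}}{2}$) and then invoking the Guralnick--Magaard classification of primitive groups with an element fixing more than half the points, which depends on the CFSG. Your sketch does not address either of these ingredients, and the ``iterated commutators'' fallback you mention is exactly where the difficulty lies; no elementary replacement is currently known.
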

This conjecture was also raised, independently, in \cite[Conjecture 1.3]{magee2016cycle},
where the authors also state precisely for which primes one can expect
the alternating group ($p\equiv3\mod16$) and for which the full symmetric
group ($p\not\equiv3\mod16)$. If this conjecture holds, then roughly
speaking (we give the precise formulation in Theorem \ref{thm:strong approximation for square-free composite}
below), $\Gamma$ acts transitively on the solutions of (\ref{eq:markoff})
modulo $n$, for every square free.

Here we prove this conjecture for most primes. More particularly,
we prove it for every $p\equiv1\left(4\right)$ outside the exceptional
set from Theorem \ref{thm:BGS}, and for density-1 of the primes $p\equiv3\left(4\right)$: 
\begin{thm}
\label{thm:1 mod 4}If $p\equiv1\left(4\right)$ and $Q_{p}$ is transitive,
then $Q_{p}$ is the full alternating or symmetric group on $Y^{*}\left(p\right)$. 
\end{thm}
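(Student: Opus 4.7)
The plan is to invoke Jordan's classical theorem on primitive permutation groups: a primitive subgroup of $\mathrm{Sym}\left(\Omega\right)$ containing a cycle of prime length $q\le\left|\Omega\right|-3$ must contain $\mathrm{Alt}\left(\Omega\right)$. It therefore suffices to prove two things about $Q_{p}$: first, that $Q_{p}$ is primitive on $Y^{*}\left(p\right)$, and second, that $Q_{p}$ contains an element whose cycle decomposition on $Y^{*}\left(p\right)$ consists of a single cycle of prime length $q$ (with $q\le\left|Y^{*}\left(p\right)\right|-3$) together with only fixed points.

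For the second task, I would analyze the rotation elements of $\Gamma$. The Vieta involutions $R_{2}$ and $R_{3}$ both fix the first coordinate, so they act on every horizontal slice $S_{a}=\left\{ \left(a,y,z\right)\in X^{*}\left(p\right)\right\}$. Their product $\rot = R_{3}R_{2}$ acts on the $\left(y,z\right)$-plane as an $\mathrm{SL}_{2}\left(\mathbb{F}_{p}\right)$-matrix of trace $a^{2}-2$. Writing $a=t+t^{-1}$ with $t$ in $\mathbb{F}_{p}^{*}$ or in the norm-one subgroup of $\mathbb{F}_{p^{2}}^{*}$ (according to whether $a^{2}-4$ is a square), the eigenvalues are $t^{\pm2}$, so the order of $\rot$ modulo the center equals the order of $t^{2}$ in the appropriate cyclic torus. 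By choosing $a$ carefully, one arranges this order to be a large prime $q$ dividing $p\pm1$; then $\rot$ acts trivially on blocks outside $S_{a}$ and as cycles of length $q$ on the blocks inside $S_{a}$. The hypothesis $p\equiv1\left(4\right)$ enters here because $-1$ is a square in $\mathbb{F}_{p}$, which makes the block structure (quotienting by sign changes on $\left(y,z\right)$) compatible with the torus action and gives clean control of the cycle count. A counting argument (using $\left|Y^{*}\left(p\right)\right|\sim p^{2}/4$ against $O\left(p\right)$ blocks per slice) confirms Jordan's inequality.

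For the first task, since $Q_{p}$ is transitive by hypothesis, primitivity amounts to ruling out nontrivial $Q_{p}$-invariant block systems on $Y^{*}\left(p\right)$. Combining rotations $\rot$ in the first coordinate with their conjugates by the coordinate-permutation $S_{3}$-subgroup of $\Gamma$ (which yields analogous rotations in the other two coordinates) should produce enough incompatible divisibility and transitivity constraints to preclude any proper nontrivial block system. The main obstacle, I expect, will be arranging for the cycle structure of $\rot$ on $S_{a}$ to consist of a \emph{single} $q$-cycle rather than several; this may require either a splicing argument using a second element of $\Gamma$ to merge multiple $q$-cycles into one, or an appeal to a more refined classification result (e.g., Marggraf's theorem, or a strengthening of Jordan's theorem allowing several equal-length prime cycles together with fixed points). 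A secondary difficulty is closing the primitivity argument uniformly across all $p\equiv1\left(4\right)$ for which $Q_{p}$ is transitive, without any further exceptional set.
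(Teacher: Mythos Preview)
Your high-level plan---Jordan's theorem applied to a prime cycle, plus a primitivity argument---is exactly the paper's strategy. But you have missed the one observation that makes both halves of the argument immediate, and the obstacle you flag at the end is precisely the symptom of that miss.

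The key point is to take the \emph{parabolic} slice $a=\pm2$, not a hyperbolic or elliptic one. When $p\equiv1\,(4)$ there are solutions with a coordinate equal to $\pm2$ (this is where the congruence condition actually enters), and on $C_{1}(\pm2)$ the rotation matrix $\left(\begin{smallmatrix}0&1\\-1&2\end{smallmatrix}\right)$ is unipotent, so $\rot_{1}$ acts on $C_{1}(\pm2)\subset Y^{*}(p)$ as a \emph{single} $p$-cycle. Every other $\rot_{1}$-cycle has length dividing $\frac{p-1}{2}$ or $\frac{p+1}{2}$, hence coprime to $p$; raising $\rot_{1}$ to the appropriate power kills those and leaves exactly one $p$-cycle. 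Your semisimple analysis ($a=t+t^{-1}$, eigenvalues $t^{\pm2}$, a prime $q\mid p\pm1$) cannot produce a single cycle---the slice $C_{1}(\pm a)$ then has size $\frac{p\mp1}{2}$ and breaks into $\frac{p\mp1}{2q}$ cycles of length $q$---which is why you ran into the ``single vs.\ several cycles'' obstacle.

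The same parabolic observation dispatches primitivity in a few lines: since $\left|C_{1}(\pm2)\right|=p$ is prime, any block meeting $C_{1}(\pm2)$ must contain all of it; but $C_{1}(\pm2)$ contains $[2,2,2+2i]$ and $[2,2+2i,2]$, so by applying the same reasoning with $\rot_{2}$ and $\rot_{3}$ the block contains every solution with $\pm2$ in \emph{any} coordinate, hence is invariant under all three rotations and all coordinate permutations, hence (by transitivity) equals $Y^{*}(p)$. No ``incompatible divisibility constraints'' across multiple slices are needed.
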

Namely, $Q_{p}$ is the full alternating or symmetric group for all
$p\equiv1\left(4\right)$ outside the exceptional set from Theorem
\ref{thm:BGS}. In fact, our proof yields that for every $p\equiv1\left(4\right)$,
the group $\Gamma$ acts as the full alternating or symmetric group
on the large component described in Theorem \ref{thm:BGS}. In the
case $p\equiv3\left(4\right)$, our proof is more involved and requires
one further assumption:
\begin{thm}
\label{thm:3 mod 4: alternating given specific conditions}Let $p$
be a prime. Assume that:

\begin{itemize}
\item $p\equiv3\left(4\right)$.
\item $Q_{p}$ is transitive.
\item The order of $\frac{3+\sqrt{5}}{2}\in\mathbb{F}_{p^{2}}$ is at least
$32\sqrt{p+1}$.
\end{itemize}
Then $Q_{p}$ is the full alternating or symmetric group on $Y^{*}\left(p\right)$.
\end{thm}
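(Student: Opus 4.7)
The plan follows the classical two-step strategy for forcing a transitive permutation group to contain the alternating group: first promote transitivity to primitivity, then exhibit an element whose cycle structure triggers a Jordan-type theorem. The third hypothesis, on the order of $\tfrac{3+\sqrt{5}}{2}\in\mathbb{F}_{p^2}^\times$, is tailored to supply such a cycle on the fiber over $x=3$.

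For primitivity, with transitivity of $Q_p$ given, I would rule out nontrivial $\Gamma$-invariant partitions of $Y^*(p)$. The obvious four-element sign-change blocks have already been quotiented out in the passage $X^*(p)\to Y^*(p)$. Any further invariant partition I would rule out using the rich generating set of $\Gamma$: the three Vieta involutions and the $S_3$ of coordinate permutations. A careful analysis of how $R_1, R_2, R_3$ and coordinate swaps move a putative block containing a well-chosen point (say a point in the fiber over $x=3$) should force the block to be trivial, using the hypothesis $p\equiv 3\pmod 4$ through the nonresiduosity of $-1$, which controls the existence of fixed points/eigenvectors for certain generators.

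To produce the needed cycle, consider the element $\rho\in\Gamma$ defined as the composition of the $(y,z)$-transposition with $R_3$, namely $\rho(x,y,z)=(x,xy-z,y)$. Since $\rho$ fixes the first coordinate, it preserves every fiber $F_k=\{(k,y,z)\in X^*(p)\}$, and on $F_k$ it is the linear rotation of the smooth conic $y^2-kyz+z^2+k^2=0$, whose eigenvalues are $\alpha_k^{\pm 1}$ with $\alpha_k=\tfrac{k+\sqrt{k^2-4}}{2}\in\mathbb{F}_{p^2}^\times$. The order of $\rho|_{F_k}$ therefore equals the multiplicative order of $\alpha_k$. For $k=3$ this is exactly the order $N$ in the hypothesis, so $N\ge 32\sqrt{p+1}$. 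Since $|F_3|\le p+1$, the image of $F_3$ inside $Y^*(p)$ contains at most $(p+1)/2$ blocks, and $\rho$ acts there with cycles of length $N$ or $N/2$. I would then pick a prime power $\ell^a$ dividing $N$ with $\ell^a$ of size on the order of $\sqrt{p+1}$, take a power of $\rho$ whose restriction to the $F_3$-image is a product of $\ell$-cycles, and, using that for generic $k\ne 3$ the order of $\alpha_k$ is coprime to $\ell$, absorb the action on the other fibers into fixed points by a further power. A standard commutator trick (or an appeal to a CFSG-enhanced Jordan theorem tolerating several parallel cycles of a common prime length) then yields an element of $Q_p$ that is a single $\ell$-cycle with everything else fixed. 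Since $Q_p$ is primitive and $\ell$ is prime with $\ell\le|Y^*(p)|-3$, Jordan's theorem forces $Q_p\supseteq A_{|Y^*(p)|}$.

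The main obstacle, I expect, is the isolation of a genuine single $\ell$-cycle: after taking powers of $\rho$, one has on the order of $\sqrt{p+1}/32$ parallel cycles on the $F_3$-image and potentially more on other fibers, and trimming these down to a single cycle requires either an explicit commutator computation or an appeal to a sharper Jordan-type result. The specific constant $32$ in the numerical hypothesis is the slack that absorbs the losses at this step. Primitivity is nontrivial but should follow from a careful direct analysis of the generators of $\Gamma$ once transitivity is assumed, with the $p\equiv 3\pmod 4$ condition entering only in a few quadratic-residue checks.
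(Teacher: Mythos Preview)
Your two-step outline --- primitivity, then a Jordan-type argument --- matches the paper's architecture, but you have the roles of the hypotheses inverted, and this leads to real gaps in both steps.

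\textbf{Primitivity.} In the paper, the order hypothesis on $\tfrac{3+\sqrt{5}}{2}$ is used \emph{here}, not in the Jordan step. The argument is not a direct block-chase with the generators; it is a number-theoretic estimate. One first shows (Proposition~\ref{prop:2 homogenous coordinates}) that in any proper $Q_p$-block at least two coordinates are type-homogeneous (all hyperbolic or all elliptic). Then, if two solutions with the same first coordinate $\pm x$ lie in one block, iterating $\rot_1$ forces a correlation between the hyperbolic/elliptic types along two $\rot_1$-cycles; a Weil character-sum bound (Theorem~\ref{thm:Weil}) shows this is impossible once $d_p(\pm x)\ge 16\sqrt{p+1}$. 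The constant $32$ in the hypothesis is exactly what makes this Weil estimate go through for $x=3$, and the symmetric solution $[3,3,3]$ then pins down the block. Your ``careful direct analysis of how $R_1,R_2,R_3$ move a block'' with ``a few quadratic-residue checks'' does not capture this mechanism, and without the Weil input there is no evident way to rule out nontrivial blocks when $p\equiv 3\pmod 4$ (the parabolic shortcut from the $1\pmod 4$ case is unavailable).

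\textbf{From primitivity to $\mathrm{Alt}$.} The paper does \emph{not} isolate a single prime cycle. Instead it observes that $\rot_1^{(p+1)/2}\in Q_p$ fixes all elliptic-first-coordinate points, which is more than half of $Y^*(p)$, and then invokes the Guralnick--Magaard classification (CFSG-dependent) of primitive groups containing an element with $\ge n/2$ fixed points, ruling out the non-alternating cases by arithmetic of $|Y^*(p)|$ and a cycle-length lemma. Your proposed route --- find a prime $\ell\mid N$ of size $\asymp\sqrt{p+1}$, raise $\rot_1$ to a power, then strip the other fibers --- has two unaddressed problems. First, $N$ need not have a prime factor of that size. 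Second, and more seriously, for any prime $\ell\mid\tfrac{p\pm 1}{2}$ there are many $\pm k$ with $\ell\mid d_p(\pm k)$ (see Table~\ref{tab:3 mod 4}), so raising to a power does not kill the action on the other fibers; you are left with a large number of parallel $\ell$-cycles across many $C_1(\pm k)$, and no commutator trick you name reduces this to one. You flag this obstacle yourself, but it is not a matter of slack absorbed by the constant $32$ --- that constant is spent entirely on the primitivity step.
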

The number $\frac{3+\sqrt{5}}{2}$ is related to the special solution
$\left[3,3,3\right]\in Y^{*}\left(p\right)$: its order inside $\mathbb{F}_{p^{2}}$
gives the length of the cycle of the transformation $\left[x,y,z\right]\mapsto\left[x,z,xz-y\right]$
containing the element $\left[3,3,3\right]$. For details see Sections
\ref{sec:Preliminaries} and \ref{subsec:Primitivity-for-p=00003D3(4)}.

As shown in Appendix \ref{sec:Dan-Carmon}, the condition regarding
the order of $\frac{3+\sqrt{5}}{2}$ is satisfied for density-1 of
the primes\footnote{A set of primes ${\cal A}$ has \emph{density 1 }if $\lim_{n\to\infty}\frac{\left|{\cal A}\cap{\cal P}_{n}\right|}{\left|{\cal P}_{n}\right|}=1$,
where ${\cal P}_{n}=\left\{ 1<p\le n\,\middle|\,p~\mathrm{is~prime}\right\} $.
In fact, the set of primes for which $\frac{3+\sqrt{5}}{2}$ has order
at least $32\sqrt{p+1}$ satisfies something slightly stronger than
density 1 \textendash{} see Appendix \ref{sec:Dan-Carmon}.}, hence
\begin{cor}
\label{cor:3 mod 4 - alternating for density-1}For density-1 of all
primes $p\equiv3\left(4\right)$, the group $Q_{p}$ is the full alternating
or symmetric group on $Y^{*}\left(p\right)$.
\end{cor}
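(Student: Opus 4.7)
The plan is to deduce Corollary~\ref{cor:3 mod 4 - alternating for density-1} as a direct combination of Theorem~\ref{thm:3 mod 4: alternating given specific conditions} with two density-$1$ statements already available: the Bourgain--Gamburd--Sarnak bound (Theorem~\ref{thm:BGS}) and the appendix by Dan Carmon (Appendix~\ref{sec:Dan-Carmon}). Concretely, I would fix a prime $p \equiv 3 \pmod 4$ and then verify that the three bullets of Theorem~\ref{thm:3 mod 4: alternating given specific conditions} hold for all but a zero-density subset of such primes.

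The first hypothesis, $p\equiv 3 \pmod 4$, is our standing assumption. For the second, let $E$ be the exceptional set of Theorem~\ref{thm:BGS}; since $\left|E\cap\{p\le T\}\right|\le T^{\varepsilon}$ for every $\varepsilon>0$, the set $E$ has natural density $0$ inside all primes, and hence density $0$ inside primes $\equiv 3 \pmod 4$ (which themselves have density $\tfrac12$ among all primes by Dirichlet's theorem). For primes $p \equiv 3\pmod 4$ with $p\notin E$, transitivity of $\Gamma$ on $X^{*}(p)$ descends to transitivity of $Q_{p}$ on $Y^{*}(p)$, so the second hypothesis of Theorem~\ref{thm:3 mod 4: alternating given specific conditions} is satisfied. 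For the third hypothesis, I would invoke the main result of Appendix~\ref{sec:Dan-Carmon}, which asserts that the set
\[
\mathcal{A} \;=\; \Bigl\{p \text{ prime} \;\Bigm|\; \tfrac{3+\sqrt{5}}{2}\in\mathbb{F}_{p^{2}} \text{ has order}\ge 32\sqrt{p+1}\Bigr\}
\]
has density $1$ among all primes (indeed slightly more). Restricting to $p\equiv 3\pmod 4$, the complement of $\mathcal{A}$ again has density $0$.

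Intersecting the two density-$1$ conditions (non-membership in $E$ and membership in $\mathcal{A}$) inside the arithmetic progression $p\equiv 3\pmod 4$ yields a set of density $1$ within primes $\equiv 3\pmod 4$ on which all three hypotheses of Theorem~\ref{thm:3 mod 4: alternating given specific conditions} are met. The theorem then forces $Q_{p}$ to be the full alternating or symmetric group on $Y^{*}(p)$, proving the corollary.

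There is no real obstacle here; all the work has been done in Theorem~\ref{thm:3 mod 4: alternating given specific conditions}, in Theorem~\ref{thm:BGS}, and in the appendix. The only thing to be careful about is that the two ``density-$1$'' statements are proved as densities among all primes, whereas the corollary asks for density $1$ among primes $\equiv 3 \pmod 4$; but since the progression itself has positive density, a set of relative density $0$ among all primes automatically has relative density $0$ in the progression, which is what is needed.
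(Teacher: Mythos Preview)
Your proposal is correct and follows essentially the same approach as the paper: the paper deduces the corollary by combining Theorem~\ref{thm:BGS} and Corollary~\ref{cor:3 has high order in density 1} (the appendix result) to verify the hypotheses of Theorem~\ref{thm:3 mod 4: alternating given specific conditions} for density-$1$ of primes $p\equiv 3\pmod 4$, exactly as you do. Your additional remark about passing from density among all primes to density within the progression $p\equiv 3\pmod 4$ is a useful clarification that the paper leaves implicit.
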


\subsection*{Composite Moduli}

Let $n$ be a positive integer which is square-free, so $n=p_{1}\cdots p_{k}$
where $p_{1},\ldots,p_{k}$ are distinct primes. Let $\mathbb{X}\left(n\right)$\marginpar{$\mathbb{X}\left(n\right)$}
denote the set of solutions to the Markoff equation (\ref{eq:markoff})
in $\nicefrac{\mathbb{Z}}{n\mathbb{Z}}$. By the Chinese Remainder
Theorem, $\mathbb{X}\left(n\right)=\mathbb{X}\left(p_{1}\right)\times\ldots\times\mathbb{X}\left(p_{k}\right)$,
and let $X^{*}\left(n\right)=X^{*}\left(p_{1}\right)\times\ldots\times X^{*}\left(p_{k}\right)$\marginpar{$X^{*}\left(n\right)$ }
be the set of solutions which are non-zero modulo any of the primes
composing $n$. The action of $\Gamma$ on $\mathbb{X}\left(n\right)$
is the diagonal action on the $\mathbb{X}\left(p_{i}\right)$, and
the subset $X^{*}\left(n\right)$ is invariant under this action.
Denote the corresponding permutation group \marginpar{$\Gamma_{n}$}$\Gamma_{n}$.
Is the action on $X^{*}\left(n\right)$ transitive? It turns out that
this would follow from Conjecture \ref{conj:always alt or sym} and
indeed holds true for the cases of that conjecture we establish:
\begin{thm}
\label{thm:strong approximation for square-free composite}Let $n=p_{1}\cdots p_{k}$
be a product of distinct primes. If for every $j=1,\ldots,k$, $Q_{p_{j}}\ge\mathrm{Alt}\left(Y^{*}\left(p_{j}\right)\right)$,
then $\Gamma$ acts transitively on $X^{*}\left(n\right)$.\\
In particular, if conjecture \ref{conj:always alt or sym} holds,
then $\Gamma$ acts transitively on $X^{*}\left(n\right)$ for every
square-free $n$.
\end{thm}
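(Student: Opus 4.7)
The plan is to leverage the Chinese Remainder Theorem decomposition $X^{*}(n)=\prod_{j=1}^{k}X^{*}(p_{j})$ and reduce the problem to a subdirect-product analysis. The image $H$ of $\Gamma$ inside $\prod_{j}\Gamma_{p_{j}}$ acts on $X^{*}(n)$ diagonally, and by construction $H$ projects surjectively onto each factor $\Gamma_{p_{j}}$. The goal is then to show that this subdirect product is large enough to be transitive on the product set, via a Goursat-style rigidity argument driven by the fact that different primes $p_{j}$ give rise to non-isomorphic non-abelian composition factors.

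The first main step is to establish transitivity on $Y^{*}(n):=\prod_{j}Y^{*}(p_{j})$. Write $m_{j}:=|Y^{*}(p_{j})|$; since $m_{j}\sim p_{j}^{2}$ and the primes $p_{j}$ are distinct and at least $5$, the integers $m_{j}$ are pairwise distinct and $\ge 5$, so the alternating groups $\mathrm{Alt}(Y^{*}(p_{1})),\ldots,\mathrm{Alt}(Y^{*}(p_{k}))$ are pairwise non-isomorphic non-abelian simple groups. Composing $\Gamma\to\prod_{j}\Gamma_{p_{j}}$ with the block-quotient $\prod_{j}\Gamma_{p_{j}}\twoheadrightarrow\prod_{j}Q_{p_{j}}$ yields a subdirect product in $\prod_{j}Q_{p_{j}}$. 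By the classical Goursat-type lemma for subdirect products whose factors have pairwise disjoint sets of non-abelian composition factors (applied iteratively across the $k$ coordinates), the image contains $\prod_{j}\mathrm{Alt}(Y^{*}(p_{j}))$, which is already transitive on $Y^{*}(n)$. To upgrade to transitivity on $X^{*}(n)$, one fixes a block $\mathbf{b}=(\mathbf{b}_{1},\ldots,\mathbf{b}_{k})\in Y^{*}(n)$; it then suffices to show that the stabilizer of $\mathbf{b}$ in $H$ acts transitively on the (generically) $4^{k}$-element fiber. The single-prime analysis of the paper provides, for each $j$, elements of $\mathrm{Stab}_{\Gamma_{p_{j}}}(\mathbf{b}_{j})$ realizing every sign-change on $\mathbf{b}_{j}$; a second Goursat-style argument applied to these stabilizers would lift the individual $V_{4}$-surjections to a simultaneous $V_{4}^{k}$-surjection.

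The main obstacle is the Goursat step itself. While the $\mathrm{Alt}(Y^{*}(p_{j}))$'s are pairwise non-isomorphic and non-abelian, the groups $\Gamma_{p_{j}}$ may share abelian quotients (for instance, a $\mathbb{Z}/2$ coming from $S_{m_{j}}/A_{m_{j}}$, or characters on the $V_{4}^{m_{j}}$-kernel of $\Gamma_{p_{j}}\twoheadrightarrow Q_{p_{j}}$), so one cannot directly conclude $H=\prod_{j}\Gamma_{p_{j}}$. The delicate point is to extract from the Goursat analysis the full ``non-abelian socle'' $\prod_{j}\mathrm{Alt}(Y^{*}(p_{j}))$, together with enough independent within-block control, rather than aiming for the full product; this minimal amount is what turns out to suffice for transitivity on $X^{*}(n)$.
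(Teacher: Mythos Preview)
Your first step---the Goursat/composition-factor argument yielding $\prod_{j}\mathrm{Alt}(Y^{*}(p_{j}))$ inside the image of $\Gamma$ in $\prod_{j}Q_{p_{j}}$, hence transitivity on $Y^{*}(n)$---is exactly what the paper does in Lemma~\ref{lem:alt =00003D=00003D> transitivity on Y*(n)}.

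The genuine gap is in your second step. The ``second Goursat-style argument applied to these stabilizers'' cannot work as stated: the within-block action on each $4$-element fibre is through a copy of $V_{4}$ (or a quotient thereof), and these are abelian and mutually isomorphic, so there is no composition-factor separation to exploit. A subdirect product in $V_{4}^{k}$ that surjects onto each factor can perfectly well be a diagonal, and nothing in your outline rules this out. You explicitly flag this obstacle in your last paragraph, but you do not resolve it; the phrase ``enough independent within-block control'' is precisely the content that is missing.

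The paper's route around this is quite different from a Goursat argument and uses concrete features of the Markoff dynamics. One orders the primes and works inductively with the kernels $\Lambda_{j-1}=\ker(\Gamma\to\Gamma_{M_{j-1}})$. The key observation (Lemma~\ref{lem:Lambda transitive on Y*}) is that $\Omega_{j-1}/\Lambda_{j-1}$ embeds in a product of copies of $\mathrm{Sym}(4)$ and is therefore solvable, so the simple factor $\mathrm{Alt}(Y^{*}(p_{j}))$ already lying in $\pi_{p_{j}}(\Omega_{j-1})$ must survive in $\pi_{p_{j}}(\Lambda_{j-1})$. Then (Lemma~\ref{lem:Lambda transitive on X*(p_k)}) one builds, by hand, commutators of specific powers of the rotations $\rot_{i}$ with transporting elements of $\Lambda_{j-1}$, producing elements of $\Lambda_{j-1}$ that realise the sign changes inside a well-chosen $4$-block of $X^{*}(p_{j})$. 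This construction is different for $p_{j}\equiv 1\,(4)$ (where one exploits a parabolic coordinate and a maximal hyperbolic one) and for $p_{j}\equiv 3\,(4)$ (where one needs the number-theoretic Proposition~\ref{prop:(e,e,*) with order divisible by 4} guaranteeing a solution with two elliptic coordinates of order divisible by $4$). The induction is anchored by a direct computer check for $n=2\cdot 5\cdot 7\cdot 11$. None of this is recoverable from a generic subdirect-product argument; the passage from $Y^{*}$ to $X^{*}$ genuinely requires these explicit constructions.
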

\begin{cor}
\label{cor:transitive for composite with factors we handle}Let $\p$
denote the set of primes that satisfy the assumptions of Theorem \ref{thm:1 mod 4}
or of Theorem \ref{thm:3 mod 4: alternating given specific conditions}.
Then for every set of distinct primes $p_{1},\ldots,p_{k}\in\p$,
$\Gamma$ acts transitively on $X^{*}\left(p_{1}\cdots p_{k}\right)$.
\end{cor}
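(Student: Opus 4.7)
The statement is set up to follow immediately by chaining the three previous results, so the plan is essentially to verify that the hypotheses of Theorem \ref{thm:strong approximation for square-free composite} are met for every product $n=p_{1}\cdots p_{k}$ with $p_{j}\in\p$.

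First I would unpack the definition of $\p$. By definition, each $p_{j}\in\p$ satisfies the hypotheses of either Theorem \ref{thm:1 mod 4} or Theorem \ref{thm:3 mod 4: alternating given specific conditions}. In both theorems, transitivity of $Q_{p_{j}}$ on $Y^{*}(p_{j})$ is among the listed assumptions, so this transitivity holds for every $p_{j}\in\p$. The conclusion of each of these theorems is precisely that $Q_{p_{j}}$ contains $\mathrm{Alt}(Y^{*}(p_{j}))$. Thus for every $j=1,\ldots,k$ we have $Q_{p_{j}}\ge\mathrm{Alt}(Y^{*}(p_{j}))$, which is exactly the hypothesis of Theorem \ref{thm:strong approximation for square-free composite}.

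Applying Theorem \ref{thm:strong approximation for square-free composite} then yields that $\Gamma$ acts transitively on $X^{*}(p_{1}\cdots p_{k})$, which is the desired conclusion. No further work is needed: the corollary is a formal consequence of the three earlier results, with the real content already absorbed into Theorems \ref{thm:1 mod 4}, \ref{thm:3 mod 4: alternating given specific conditions}, and \ref{thm:strong approximation for square-free composite}. The only ``obstacle'' is bookkeeping, namely making sure that the transitivity assumption required by Theorems \ref{thm:1 mod 4} and \ref{thm:3 mod 4: alternating given specific conditions} has already been folded into the definition of $\p$, which it has.
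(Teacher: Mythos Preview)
Your proposal is correct and matches the paper's approach: the corollary is treated as an immediate formal consequence of Theorems \ref{thm:1 mod 4}, \ref{thm:3 mod 4: alternating given specific conditions}, and \ref{thm:strong approximation for square-free composite}, with no additional argument given. The paper even explicitly notes (in the remark on the CFSG) that Corollary \ref{cor:transitive for composite with factors we handle} is deduced from Theorem \ref{thm:strong approximation for square-free composite} exactly as you describe.
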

Bourgain, Gamburd and Sarnak already proved Corollary \ref{cor:transitive for composite with factors we handle}
for primes $p\equiv1\left(4\right)$ for which $\Gamma_{p}$ is transitive.
This result should appear in the series announced in \cite{BGS-announcement2016markoff}.
We stress that our proof is entirely different: while Bourgain, Gamburd
and Sarnak improve their techniques from the proof of Theorem \ref{thm:BGS}
so that the argument work for several primes simultaneously, our proof
is group-theoretic and uses Theorem \ref{thm:BGS} as a black box.
Both proofs rely on solutions containing the parabolic elements $\pm2$
\textendash{} see Figure \ref{fig:line of arguments} and Section
\ref{sec:Preliminaries}. 

For $n=p_{1}\cdots p_{k}$ as above, we use the notation $Y^{*}\left(n\right)=Y^{*}\left(p_{1}\right)\times\ldots\times Y^{*}\left(p_{k}\right)$\marginpar{$Y^{*}\left(n\right)$}
for the set of blocks in $X^{*}\left(n\right)$ and $Q_{n}$\marginpar{$Q_{n}$}
for the permutation group induced by the action of $\Gamma$ on $Y^{*}\left(n\right)$.
Note that these blocks are given by sign changes modulo every prime
separately and are usually of size $4^{k}$ each (if all primes are
odd). It is quite straight-forward to prove that under the assumptions
of Theorem \ref{thm:strong approximation for square-free composite},
$\Gamma$ acts transitively on $Y^{*}\left(n\right)$, using composition
factors of $Q_{n}$. It requires some further argument to show that
$\Gamma$ acts transitively on the full set $X^{*}\left(n\right)$.
We elaborate in Section \ref{sec:Strong-Approximation-for-composite}.
\begin{rem}[Regarding the classification of finite simple groups]
 At this point we would like to remark on the dependence of our results
on the Classification of Finite Simple Groups (CFSG)\marginpar{CFSG}.
We use the classification only in the proof of Theorem \ref{thm:3 mod 4: alternating given specific conditions}:
we first give an elementary proof that for a prime $p$ satisfying
the assumptions in the theorem, $Q_{p}$ is a primitive permutation
group\footnote{Recall that a permutation group $G\le\mathrm{Sym}\left(m\right)$
is called primitive if it does not preserve any non-trivial block-structure.
In particular, if $m\ge3$, $G$ must be transitive.}, and then rely on (results depending on) the CFSG to deduce that
$Q_{p}$ is the full alternating or symmetric group. If we rely on
Theorem \ref{thm:strong approximation for square-free composite}
to deduce Corollary \ref{cor:transitive for composite with factors we handle},
the latter also becomes partly dependent on the CFSG. This can be
avoided, however, and to this aim we also give a proof that $\Gamma$
acts transitively on $X^{*}\left(n\right)$ assuming only that $Q_{p_{1}},\ldots,Q_{p_{k}}$
are primitive permutation groups, without using the CFSG (see Theorem
\ref{thm:if primitive then transitive on product} below). To sum
up, the only results depending on the CFSG are Theorem \ref{thm:3 mod 4: alternating given specific conditions},
Corollary \ref{cor:3 mod 4 - alternating for density-1}, and the
part of Theorem \ref{thm:Action on T2-system} relating to primes
$p\equiv3\left(4\right)$. In contrast, Theorems \ref{thm:1 mod 4}
and \ref{thm:strong approximation for square-free composite} and
Corollary \ref{cor:transitive for composite with factors we handle}
do not depend on the CFSG. We illustrate this in Figure \ref{fig:line of arguments}.
\end{rem}
Indeed, the following result does not depend on the CFSG:
\begin{thm}
\label{thm:if primitive then transitive on product}Let $n=p_{1}\cdots p_{k}$
be a product of distinct primes. If $Q_{p_{1}},\ldots,Q_{p_{k}}$
are \emph{primitive }permutation groups, then $\Gamma$ acts transitively
on $X^{*}\left(n\right)$.
\end{thm}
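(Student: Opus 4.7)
The plan is to split the proof into two stages: transitivity of $\Gamma$ on the block-set $Y^*(n)$, and transitivity of the block-stabilizer within a single $(\mathbb{Z}/2)^{2k}$-fiber of the projection $X^*(n)\to Y^*(n)$.

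For the first stage I would induct on $k$, with the case $k=1$ being the assumed primitivity (which implies transitivity, as $|Y^*(p)|\ge 2$). For the inductive step, put $m=p_1\cdots p_{k-1}$ and let $N\lhd\Gamma$ be the kernel of the action on $Y^*(m)$. Because the projection $\Gamma\twoheadrightarrow Q_{p_k}$ is surjective and $N$ is normal in $\Gamma$, the image $\overline N$ of $N$ in $Q_{p_k}$ is a normal subgroup of $Q_{p_k}$, and by primitivity of $Q_{p_k}$ it is either trivial or transitive on $Y^*(p_k)$. In the transitive case, combining with the inductive hypothesis gives transitivity on $Y^*(n)$ in the usual way: first match the first $k-1$ coordinates via the inductive hypothesis, then fix the $k$-th via an element of $N$. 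So it remains to rule out the trivial case, equivalently to show that the map $\Gamma\to Q_{p_k}$ does not factor through $\Gamma\to \prod_{i<k}Q_{p_i}$. I would do this by comparing composition factors: since $|Y^*(p_k)|$ is divisible by $p_k$ and $Q_{p_k}$ is primitive, its socle is transitive and its order is divisible by $p_k$, producing a minimal normal subgroup of $Q_{p_k}$ whose involvement of $p_k$ cannot be mimicked by $\prod_{i<k}Q_{p_i}$ (after ruling out the affine case via the specific arithmetic of $|Y^*(p)|$ not being a prime power for the primes in question).

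For the second stage, transitivity on $Y^*(n)$ reduces transitivity on $X^*(n)$ to showing that the image $\Sigma\le(\mathbb{Z}/2)^{2k}$ of a block-stabilizer fills out the full group. Each projection $\Sigma\to(\mathbb{Z}/2)^2_i$ is surjective by the $k=1$ case of the theorem, which produces from primitivity of $Q_{p_i}$ explicit compositions of Vieta involutions and coordinate permutations stabilizing a given block in $X^*(p_i)$ and realizing both non-trivial sign changes on it. Hence $\Sigma$ is a subdirect product of $k$ copies of $(\mathbb{Z}/2)^2$; upgrading to the full direct product requires finding sign-change elements of $\Gamma$ that act non-trivially modulo $p_k$ only and trivially modulo the other primes. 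I would obtain these by CRT-style arguments on the orders of suitable Vieta-type elements (for instance the transformation $[x,y,z]\mapsto[x,z,xz-y]$, whose order on $Y^*(p_i)$ is governed by the order of specific elements of $\mathbb{F}_{p_i^2}^\times$): an appropriate power trivializes the action on all but one of the primes while preserving a non-trivial residual action on the remaining one.

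The main obstacle is the non-triviality step in each stage: producing explicit elements of $\Gamma$ whose residual actions modulo different primes are decoupled. In the first stage this appears as the need to show that $\Gamma\to Q_{p_k}$ does not factor through the previous primes, and in the second stage as the need for single-prime sign-change elements. Both steps must rely on the specific arithmetic of orders of Markoff-group elements, together with elementary divisibility facts about primitive permutation groups of known degree, rather than on the classification of finite simple groups; this is precisely what keeps the proof CFSG-free, in contrast with Theorem~\ref{thm:strong approximation for square-free composite}.
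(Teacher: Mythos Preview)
Your two-stage scaffolding matches the paper's, but Stage~1 contains a genuine gap. The claim that ``the involvement of $p_k$'' in $\mathrm{Soc}(Q_{p_k})$ ``cannot be mimicked by $\prod_{i<k}Q_{p_i}$'' is not justified and is in general false: nothing prevents $p_k$ from dividing $|Q_{p_i}|$ for $i<k$. For instance, with $p_k=5$ and $p_i=13$ one has $|Y^*(13)|=52$, and $Q_{13}\le\mathrm{Sym}(52)$ has order divisible by large powers of~$5$; indeed $\mathrm{Alt}(52)$ itself has order divisible by $5^{12}$. So the mere fact that $p_k\mid |\mathrm{Soc}(Q_{p_k})|$ gives no obstruction to $Q_{p_k}$ being a quotient of $Q_m$. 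A composition-factor argument of this type works if you already know the composition factors are alternating groups of pairwise distinct degrees---but that is exactly the content of Theorem~\ref{thm:strong approximation for square-free composite}, and establishing it for $p\equiv 3\pmod 4$ is what required the CFSG.

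The paper avoids this altogether. It first orders the primes not by size but by the order $o_p$ of $\rot_1$ in $Q_p$ (Notation~\ref{notation:primes,kernels}). When $o_p<o_q$, the element $\rot_1^{\,o_p}$ lies in $\ker(\Gamma\to Q_p)$ but has nontrivial image $\overline g$ in $Q_q$; one then takes commutators of $\overline g$ with socle elements to produce nontrivial elements of $\mathrm{Soc}(Q_q)$ inside $\pi_q(\ker(\Gamma\to Q_p))$, and normality forces the whole socle in (Lemma~\ref{lem:Q_p primitive than socle contained in image of kernel of Q_q}). The rare tie case $o_p=o_q$ (e.g.\ $p=11$, $q=5$) forces $q\equiv 1\pmod 4$, so $Q_q\ge\mathrm{Alt}(Y^*(q))$ by the CFSG-free Theorem~\ref{thm:1 mod 4}, and one then uses CFSG-free bounds on primitive groups (Theorem~\ref{thm:bounds on permutation groups}) to exclude $\mathrm{Alt}(Y^*(q))$ as a composition factor of $Q_p$. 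Finally a nested-commutator trick assembles these pairwise facts into $\pi_{p_k}(\Omega_{k-1})\ge\mathrm{Soc}(Q_{p_k})$. This is the substance you are missing: an explicit, order-based separation of the primes rather than an abstract divisibility count.

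Your Stage~2 is pointed in the right direction but underspecifies the hard step. The paper does not work directly with the full $(\mathbb{Z}/2)^{2k}$ fiber; instead it shows that the kernel $\Lambda_{j-1}$ of the action on $X^*(M_{j-1})$ already surjects onto $\mathrm{Soc}(Q_{p_j})$ (because $\Omega_{j-1}/\Lambda_{j-1}$ is a $\{2,3\}$-group and the socle has no subgroups of index $\le 3$), and then produces explicit commutators $g^{-1}\rot_i^{-q}g\,\rot_i^{q}\in\Lambda_{j-1}$ realizing the sign changes within a single block of $X^*(p_j)$. For $p_j\equiv 3\pmod 4$ this last step requires the existence of a solution with two elliptic coordinates of order divisible by~$4$ (Proposition~\ref{prop:(e,e,*) with order divisible by 4}), which is a nontrivial character-sum computation; your ``CRT-style arguments on the orders'' gestures at this but does not supply it.
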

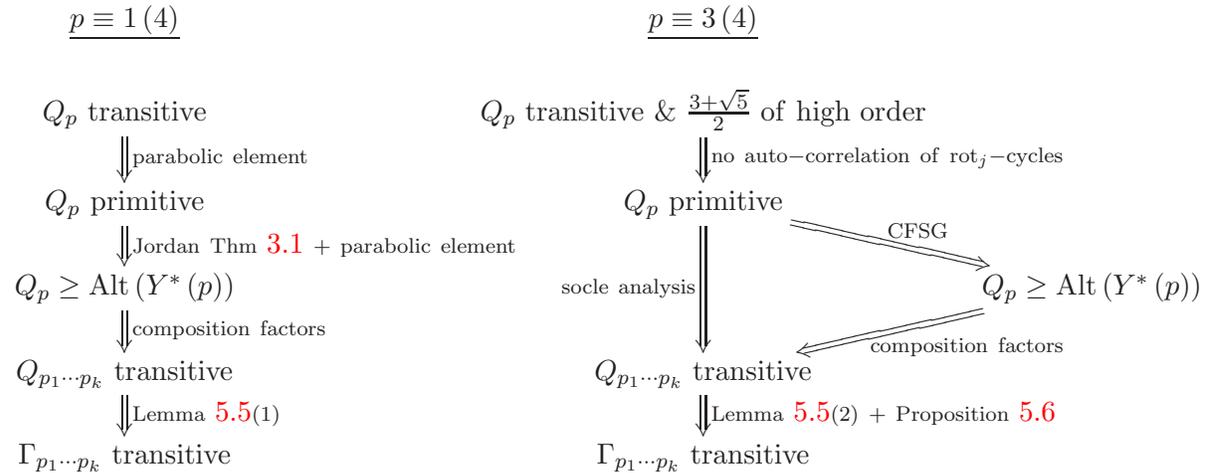
\begin{figure}[h]
\raggedright{}%
\begin{minipage}[t]{0.35\columnwidth}%
\[
\xymatrix@=15pt{\underline{p\equiv1\left(4\right)} &  & \underline{p\equiv3\left(4\right)}\\
Q_{p}~\mathrm{transitive\ar@{=>}[d]^{\mathrm{parabolic~element}}} &  & Q_{p}~\mathrm{transitive}~\&~\frac{3+\sqrt{5}}{2}~\mathrm{of~high~order}\ar@{=>}[d]^{\mathrm{no~auto-correlation~of}~\rot_{j}-\mathrm{cycles}}\\
Q_{p}~\mathrm{primitive}\ar@{=>}[d]^{\mathrm{Jordan~Thm~}\ref{thm:Jordan}\mathrm{~+~parabolic~element}} & ~~~~~~~~~~~~~~ & Q_{p}~\mathrm{primitive}\ar@{=>}[dd]_{\mathrm{socle~analysis}}\ar@{=>}[dr]^{~~\mathrm{CFSG}}\\
Q_{p}\ge\mathrm{Alt}\left(Y^{*}\left(p\right)\right)\ar@{=>}[d]^{\mathrm{composition~factors}} &  &  & Q_{p}\ge\mathrm{Alt}\left(Y^{*}\left(p\right)\right)\ar@{=>}[dl]^{\mathrm{~~~~~~~~composition~factors}}\\
Q_{p_{1}\cdots p_{k}}~\mathrm{transitive}\ar@{=>}[d]^{\mathrm{Lemma}~\ref{lem:Lambda transitive on X*(p_k)}(1)} &  & Q_{p_{1}\cdots p_{k}}~\mathrm{transitive}\ar@{=>}[d]^{\mathrm{Lemma}~\ref{lem:Lambda transitive on X*(p_k)}(2)\mathrm{~+~Proposition}~\ref{prop:(e,e,*) with order divisible by 4}}\\
\Gamma_{p_{1}\cdots p_{k}}~\mathrm{transitive} &  & \Gamma_{p_{1}\cdots p_{k}}~\mathrm{transitive}
}
\]
\end{minipage}\caption{\label{fig:line of arguments}The flow of arguments in the paper.
All the notions are explained in the sequel of the paper. Notice that
the case $p\equiv1\left(4\right)$ is indeed much simpler than its
counterpart $p\equiv3\left(4\right)$. To see the entire proof of
the results for primes $p\equiv1\left(4\right)$, it is enough to
read Section \ref{sec:Preliminaries}, the short Section \ref{sec:Alternating-Group-for 1(4)},
the short Section \ref{subsec:Transitivity-on-Y*(n)} and Section
\ref{subsec:Transitivity-on-X*(n)} up to the the first half of Lemma
\ref{lem:Lambda transitive on X*(p_k)}.}
\end{figure}

\subsection*{$T_{2}$-systems}

Let $G$ be a finitely generated group and $\F_{r}$ the free group
on $r$ generators. A normal subgroup $N\trianglelefteq\F_{r}$ is
said to be \emph{$G$-defining} if $\nicefrac{\F_{r}}{N}\cong G$.
Denote by $\Sigma_{r}\left(G\right)$\marginpar{$\Sigma_{r}\left(G\right)$}
the set of $G$-defining normal subgroups in $\F_{r}$. Consider the
action of $\mathrm{Aut}\left(\F_{r}\right)$ (in fact, of $\mathrm{Out}\left(\F_{r}\right)$)
on $\Sigma_{r}\left(G\right)$. The orbits of this action are called
$T_{r}$-systems of $G$. 

The following theorem is due to Gilman (for $r\ge4$) and Evans (who
extended to $r=3$):
\begin{thm}
\cite{gilman1977finite,evans1993t} \label{thm:Gilman}Let $G$ be
a finite non-abelian simple group and $r\ge3$. Then $\mathrm{Aut}\left(\F_{r}\right)$
acts on at least one $T_{r}$-system of $G$ as the alternating or
symmetric group.
\end{thm}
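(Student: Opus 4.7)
The plan is to recast the statement in terms of generating tuples and Nielsen moves. The set $\Sigma_{r}(G)$ is in natural bijection with $\mathrm{Epi}(\F_{r},G)/\mathrm{Aut}(G)$, and a choice of free basis of $\F_{r}$ identifies the latter with the set of generating $r$-tuples of $G$ modulo the diagonal action of $\mathrm{Aut}(G)$. Under this dictionary, the action of $\mathrm{Out}(\F_{r})$ becomes the action by Nielsen transformations (coordinate permutations, inversion of one coordinate, and elementary replacements $g_{i}\mapsto g_{i}g_{j}^{\pm1}$), so $T_{r}$-systems become Nielsen-and-$\mathrm{Aut}(G)$-equivalence classes of generating tuples.

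Since every finite non-abelian simple group $G$ is 2-generated, I would work with the distinguished $T_{r}$-system $\Omega$ containing all ``padded'' tuples $(a,b,1,\ldots,1)$ with $\langle a,b\rangle=G$. The crucial observation, available because $r\ge3$, is that the trailing identity slots form a workspace: a Nielsen move $g_{r}\mapsto g_{r}g_{i}$ can copy any chosen element into a previously trivial slot; coordinate permutations and further multiplications can then shuffle the content; and a final $g_{r}\mapsto g_{r}g_{j}^{-1}$ erases the slot back to the identity. I would use this scheme systematically to exhibit, as images in $\mathrm{Sym}(\Omega)$, Nielsen words that induce on the first two coordinates componentwise conjugation $(a,b)\mapsto(cac^{-1},cbc^{-1})$ for any $c\in G$, the swap $(a,b)\mapsto(b,a)$, and the replacements $(a,b)\mapsto(ab^{\pm1},b)$, $(a,b)\mapsto(a,ba^{\pm1})$. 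The key new move, unavailable at $r=2$, is one that acts as a 3-cycle on a triple of specific elements of $\Omega$ while fixing everything else; this is produced by an explicit commutator-style Nielsen word in which the padding slot is activated, used to realize the 3-cycle, and then zeroed out again.

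Granting such moves, the image of $\mathrm{Aut}(\F_{r})$ in $\mathrm{Sym}(\Omega)$ is transitive and contains a 3-cycle moving only three of the points; by Jordan's classical theorem on primitive permutation groups containing a 3-cycle, together with a short argument showing primitivity on $\Omega$, the image must contain $\mathrm{Alt}(\Omega)$. Thus $\mathrm{Aut}(\F_{r})$ acts on the $T_{r}$-system $\Omega$ as $\mathrm{Alt}(\Omega)$ or $\mathrm{Sym}(\Omega)$. The main obstacle is the boundary case $r=3$, where only a single padding coordinate is available and the workspace trick is far more constrained; this is precisely the extension beyond Gilman's original $r\ge4$ argument that Evans had to supply, and it requires a finer combinatorial analysis of Nielsen moves on triples in a 2-generated group. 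A small number of exceptional simple groups $G$ for which $|\Omega|$ is too small for the alternating/symmetric conclusion to be meaningful (e.g.\ some groups of very low order) would be dispatched by direct finite inspection.
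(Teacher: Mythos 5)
First, note that the paper does not prove Theorem \ref{thm:Gilman} at all: it is imported verbatim from Gilman and Evans \cite{gilman1977finite,evans1993t} and used as background, so there is no in-paper argument to compare yours against. Judged on its own terms, your proposal correctly sets up the standard dictionary ($\Sigma_{r}(G)\leftrightarrow$ generating $r$-tuples modulo $\mathrm{Aut}(G)$, with $\mathrm{Out}(\F_{r})$ acting by Nielsen moves) and correctly identifies the known strategy: work in the $T_{r}$-system of padded tuples $(a,b,1,\ldots,1)$, produce a $3$-cycle in the image of $\mathrm{Aut}(\F_{r})$ inside $\mathrm{Sym}(\Omega)$, and invoke Jordan's theorem. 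But the two steps that constitute the actual content of Gilman's and Evans' papers are asserted rather than proved. You never exhibit the Nielsen word that acts as a $3$-cycle fixing all other points of $\Omega$ \textendash{} ``an explicit commutator-style Nielsen word in which the padding slot is activated, used to realize the $3$-cycle, and then zeroed out again'' is a description of what such a word should do, not a construction, and it is far from obvious that a Nielsen word can move exactly three equivalence classes while fixing every other generating tuple. Likewise, primitivity of the action on $\Omega$ is indispensable for Jordan's theorem (imprimitive groups such as $\mathrm{Sym}(3)\times\mathrm{Sym}(3)$ acting on six points contain $3$-cycles), and you give no argument for it. Finally, you explicitly concede that the case $r=3$ \textendash{} which is part of the statement \textendash{} ``requires a finer combinatorial analysis'' that you do not supply.

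Two smaller points. The reduction to padded tuples $(a,b,1,\ldots,1)$ rests on every finite non-abelian simple group being $2$-generated, which is itself a consequence of the classification of finite simple groups; Gilman's 1977 argument predates that fact, so if you want a proof faithful to the cited sources (or one that is CFSG-free, a distinction this paper cares about elsewhere) you should either avoid this reduction or flag the dependence. Also, transitivity of the image on $\Omega$ is automatic, since a $T_{r}$-system is by definition a single orbit; the work is entirely in primitivity and in the $3$-cycle. As it stands, your text is a correct roadmap of the published proof, not a proof.
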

In fact, Gilman and Evans provide more information about the special
$T_{r}$-system on which $\mathrm{Aut}\left(\F_{r}\right)$ acts as
the full alternating or symmetric group, and show it is especially
large. Gilman also showed that for $G=\mathrm{PSL\left(2,p\right)}$
with $p\ge5$ prime, there is only one $T_{r}$-system for $r\ge3$.
Namely, he proved that $\mathrm{Aut}\left(\F_{r}\right)$ acts transitively
on $\Sigma_{r}\left(G\right)$. Theorem \ref{thm:Gilman} says, of
course, that the permutation group in this case is the alternating
or symmetric group. For more details we refer the reader to the beautiful
surveys \cite{pak2001PRA,lubotzky2011dynamics}.

When $r=2$, the action of $\mathrm{Aut}\left(\F_{2}\right)$ on $\Sigma_{2}\left(G\right)$
is not transitive for any finite non-abelian simple group $G$. In
fact, the number of $T_{2}$-systems tends to infinity as $\left|G\right|\to\infty$
\cite{garion2009commutator}. The main reason for this phenomenon
is that if $\left\{ a,b\right\} $ are a set of generators of $\F_{2}$,
and $\varphi\colon\F_{2}\twoheadrightarrow G$ an epimorphism, then
the set of conjugacy classes of\footnote{Here, $\left[a,b\right]$ denotes the commutator $aba^{-1}b^{-1}$.}
$\varphi\left(\left[a,b\right]\right)$ and of $\varphi\left(\left[a,b\right]\right)^{-1}$
is a well-defined invariant of the $G$-defining subgroup $N=\ker\varphi$,
which is also invariant under $\mathrm{Aut}\left(\F_{2}\right)$.
We elaborate more in Section \ref{sec:T-systems}.

Our result sheds more light on the case of $T_{2}$-systems for $G=\mathrm{PSL}\left(2,p\right)$.
If $A,B\in\mathrm{SL}\left(2,p\right)$ and we denote $x=\mathrm{tr}\left(A\right)$,
$y=\mathrm{tr}\left(B\right)$ and $z=\mathrm{tr}\left(AB\right)$,
then 
\[
\mathrm{tr}\left(\left[A,B\right]\right)=x^{2}+y^{2}+z^{2}-xyz-2.
\]
In Section \ref{sec:T-systems} it is explained why the map $\left(A,B\right)\mapsto\left(\mathrm{tr}\left(A\right),\mathrm{tr}\left(B\right),\mathrm{tr}\left(AB\right)\right)$
yields a bijection between the elements in $\Sigma_{2}\left(\mathrm{PSL\left(2,p\right)}\right)$
with associated trace $-2$ and the elements of $Y^{*}\left(p\right)$.
In this language, the main result of \cite{BGS-I2016markoff} \textendash{}
Theorem \ref{thm:BGS} above \textendash{} says that outside the exceptional
set of primes, these elements form a single $T_{2}$-system. See \cite{mccullough2013nielsen}
for an extensive survey of the connection between the Markoff equation
(\ref{eq:markoff}) and $T_{2}$-systems of $\mathrm{PSL}\left(2,p\right)$.
Through this connection, Theorems (\ref{thm:1 mod 4}) and (\ref{thm:3 mod 4: alternating given specific conditions})
translate to a result in the spirit of Theorem \ref{thm:Gilman}:
\begin{thm}
\label{thm:Action on T2-system}Assume that the prime $p$ satisfies
the assumptions of Theorem \ref{thm:1 mod 4} or of Theorem \ref{thm:3 mod 4: alternating given specific conditions}.
Then $\mathrm{Aut}\left(\F_{2}\right)$ acts on the trace-$\left(-2\right)$
$T_{2}$-system of $\mathrm{PSL}\left(2,p\right)$ as the full alternating
or symmetric group.
\end{thm}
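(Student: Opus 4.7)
The plan is to translate Theorems \ref{thm:1 mod 4} and \ref{thm:3 mod 4: alternating given specific conditions} through the classical Fricke--Vogt character correspondence between $T_{2}$-systems of $\mathrm{PSL}\left(2,p\right)$ and Markoff triples modulo sign, the details of which will be developed in Section \ref{sec:T-systems}. The first step is to make precise the map
\[
\left(A,B\right)\longmapsto\left(\tr A,\tr B,\tr AB\right),
\]
which in $\mathrm{PSL}\left(2,p\right)$ is only defined up to simultaneous sign changes, and hence lands naturally in $Y^{*}\left(p\right)$ rather than $X^{*}\left(p\right)$. The Fricke identity $\tr\left[A,B\right]=x^{2}+y^{2}+z^{2}-xyz-2$ shows that a generating pair has commutator trace $-2$ exactly when its character triple satisfies the Markoff equation, so this map sends the trace-$\left(-2\right)$ elements of $\Sigma_{2}\left(\mathrm{PSL}\left(2,p\right)\right)$ into $Y^{*}\left(p\right)$. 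I would then invoke the classical fact, to be established in Section \ref{sec:T-systems}, that the induced map on $\mathrm{Aut}\left(\F_{2}\right)$-orbits is a bijection onto $Y^{*}\left(p\right)$.

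Next, I would verify that this bijection is $\mathrm{Aut}\left(\F_{2}\right)$-equivariant, where the action on $Y^{*}\left(p\right)$ is the $\Gamma$-action. It suffices to check this on a Nielsen generating set of $\mathrm{Aut}\left(\F_{2}\right)$. Direct computation using the trace identities $\tr\left(BA\right)=\tr\left(AB\right)$ and $\tr\left(A^{-1}B\right)=\tr A\cdot\tr B-\tr\left(AB\right)$ shows that the swap $\left(a,b\right)\mapsto\left(b,a\right)$ realizes the coordinate transposition $\left(x,y,z\right)\mapsto\left(y,x,z\right)$, while the inversion $\left(a,b\right)\mapsto\left(a^{-1},b\right)$ realizes the Vieta involution $R_{3}\colon\left(x,y,z\right)\mapsto\left(x,y,xy-z\right)$, and cyclic analogues give $R_{1}$ and $R_{2}$. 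The remaining Nielsen moves decompose into these together with inversions of individual generators, which only flip signs of two trace coordinates and hence act trivially on $Y^{*}\left(p\right)$. Consequently the image of $\mathrm{Aut}\left(\F_{2}\right)$ in $\mathrm{Sym}\left(Y^{*}\left(p\right)\right)$ under the trace-$\left(-2\right)$ $T_{2}$-action coincides precisely with $Q_{p}$.

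With the equivariant bijection in hand, the theorem becomes immediate: under the hypotheses of Theorem \ref{thm:1 mod 4} or Theorem \ref{thm:3 mod 4: alternating given specific conditions}, $Q_{p}\ge\mathrm{Alt}\left(Y^{*}\left(p\right)\right)$, so $\mathrm{Aut}\left(\F_{2}\right)$ acts on the trace-$\left(-2\right)$ $T_{2}$-system as the full alternating or symmetric group.

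The main obstacle is the first step: establishing the bijection between the trace-$\left(-2\right)$ $T_{2}$-system and $Y^{*}\left(p\right)$. One must show that every non-zero Markoff triple mod $p$ lifts to a pair $\left(A,B\right)\in\mathrm{SL}\left(2,p\right)^{2}$ with the given trace coordinates (Fricke's theorem on the $\mathrm{SL}_{2}$ character variety), that the lift can be arranged to generate $\mathrm{PSL}\left(2,p\right)$ rather than a proper subgroup (which is where the non-vanishing of all three coordinates and the parabolic nature of $\left[A,B\right]$ enter), and that two such lifts lie in the same $T_{2}$-orbit if and only if their character triples are sign-change equivalent. These verifications, though essentially standard, will require careful bookkeeping and form the technical core of Section \ref{sec:T-systems}; everything else in the proof of Theorem \ref{thm:Action on T2-system} is a formal consequence.
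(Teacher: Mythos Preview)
Your approach is correct and essentially identical to the paper's: establish an equivariant bijection $\Sigma_{2,-2}(p)\leftrightarrow Y^{*}(p)$ via the trace map, check on Nielsen generators that the $\mathrm{Aut}(\F_{2})$-action corresponds to the $\Gamma$-action (the paper uses $r,s,t$ mapping to $R_{3},\tau_{(12)},\tau_{(23)}$, matching your computations), and then invoke Theorems \ref{thm:1 mod 4} and \ref{thm:3 mod 4: alternating given specific conditions}. One terminological slip to fix: the bijection is between \emph{elements} of $\Sigma_{2,-2}(p)$ and elements of $Y^{*}(p)$, not between $\mathrm{Aut}(\F_{2})$-orbits and $Y^{*}(p)$; likewise, two lifts give the same point of $\Sigma_{2,-2}(p)$ when they are $\mathrm{Aut}(\mathrm{PSL}(2,p))$-conjugate, not when they lie in the same $T_{2}$-orbit---the paper handles this via Macbeath's theorems and the analysis in \cite{mccullough2013nielsen}.
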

The paper is organized as follows. Section \ref{sec:Preliminaries}
gives some more notation and collects some results from \cite{BGS-I2016markoff}
we use here. In the short Section \ref{sec:Alternating-Group-for 1(4)}
and longer Section \ref{sec:Alternating-Group-for 3 (4)} we prove
Theorem \ref{thm:1 mod 4} for $p\equiv1\left(4\right)$ and Theorem
\ref{thm:3 mod 4: alternating given specific conditions} for $p\equiv3\left(4\right)$,
respectively. Section \ref{sec:Strong-Approximation-for-composite}
is dedicated to proving the transitivity of $\Gamma$ in certain composite
moduli: first assuming the groups $Q_{p}$ contain the alternating
group (in Section \ref{subsec:Transitivity-on-Y*(n)}), and then assuming
only that $Q_{p}$ is primitive (Section \ref{subsec:transitivity on Y*(n) without CFSG}).
In Section \ref{sec:T-systems} we give some background on $T$-systems
and prove Theorem \ref{thm:Action on T2-system}. Finally, Appendix
\ref{sec:Dan-Carmon}, by Dan Carmon, shows that the assumption in
Theorem \ref{sec:Alternating-Group-for 3 (4)} regarding the order
of $\frac{3+\sqrt{5}}{2}\in\mathbb{F}_{p^{2}}$ holds for most primes. 

\subsection*{Acknowledgments}

We are indebted to Peter Sarnak for his encouragement, and for stimulating
discussions, enlightening suggestions and clever advice. We would
also like to thank Zeev Rudnick and P\"{a}r Kurlberg for beneficial comments,
and to Dan Carmon for writing the useful Appendix \ref{sec:Dan-Carmon}.
We have benefited much from the mathematical open source community,
and in particular from SageMath. Author Meiri was supported by BSF
grant 2014099 and ISF grant 662/15. Author Puder was supported by
the Rothschild fellowship, by the NSF under agreement No.~DMS-1128155
and by the ISF grant 1071/16. Author Carmon was supported by the European
Research Council under the European Union's Seventh Framework Programme
(FP7/2007-2013) / ERC grant agreement $\mathrm{n^{o}}$ 320755. 

\section{Preliminaries\label{sec:Preliminaries}}

Before proving our main results, let us describe some further notation
and collect further results from \cite{BGS-I2016markoff} that we
use below.

\subsection*{Further notation}
\begin{itemize}
\item We already introduced above the notation $\left[x,y,z\right]$ for
the block of the solution $\left(x,y,z\right)$ in $X^{*}\left(p\right)$,
so $\left[x,y,z\right]\in Y^{*}\left(p\right)$. We also use this
notation for a composite (square-free) modulo $n$: here $\left[x,y,z\right]$
is the element (block) in $Y^{*}\left(n\right)$ containing the solution
$\left(x,y,z\right)$.
\item Some elements in $\Gamma$ are permutations of the three coordinates
of solutions. We denote these elements by $\tau_{\left(12\right)}$\marginpar{$\tau_{\left(12\right)},\tau_{\left(123\right)}$}
for the permutation exchanging the first and second coordinates, by
$\tau_{\left(123\right)}$ for the cyclic permutation and so on. By
abuse of notation, we use the same notation for the corresponding
elements in $\Gamma$, $\Gamma_{p}$, $Q_{p}$, $\Gamma_{n}$ and
$Q_{n}$. 
\item The analysis in \cite{BGS-I2016markoff}, as well as in the current
work, relies heavily on three ``rotation'' elements\label{rotations}
$\rot_{1},\rot_{2},\rot_{3}\in\Gamma$\marginpar{$\protect\rot_{i}$}.
They are defined by
\[
\rot_{j}\stackrel{\mathrm{def}}{=}R_{j+2}\circ\tau_{\left(j+1~~j+2\right)}
\]
(the indices are taken modulo $3$). For example, $\left(x,y,z\right)\stackrel{\rot_{1}}{\mapsto}\left(x,z,xz-y\right)$.
The rotation $\rot_{j}$ fixes the $j$-th coordinate and its action
on $X^{*}\left(p\right)$ and on $Y^{*}\left(p\right)$ is completely
analyzed in \cite{BGS-I2016markoff} \textendash{} see Lemmas \ref{lem:rotations a la BGS p=00003D1(4)}
and \ref{lem:rotations a la BGS p=00003D3(4)} below. Again, by abuse
of notation we write $\rot_{i}$ for the rotation element in the different
groups $\Gamma$, $\Gamma_{p}$, $Q_{p}$, $\Gamma_{n}$ and $Q_{n}$. 
\item Following \cite{BGS-I2016markoff}, we denote the ``conic sections''
by $C_{j}\left(a\right)$ \marginpar{$C_{j}\left(a\right)$}, $j=1,2,3$.
These are defined as
\[
C_{j}\left(a\right)=\left\{ \left(x_{1},x_{2},x_{3}\right)\in X^{*}\left(p\right)\,\middle|\,x_{j}=a\right\} .
\]
When we write $C_{j}\left(\pm a\right)$\marginpar{$C_{j}\left(\pm a\right)$},
we mean the conic section in $Y^{*}\left(p\right)$:
\[
C_{j}\left(\pm a\right)=\left\{ \left[x_{1},x_{2},x_{3}\right]\in Y^{*}\left(p\right)\,\middle|\,x_{j}=a\right\} .
\]
\item For every prime $p$ we let $i$\marginpar{$i$} denote a square root
of $-1$ (in $\mathbb{F}_{p}$ or in $\mathbb{F}_{p^{2}}$).
\item For $x\in\nicefrac{\mathbb{Z}}{p\mathbb{Z}}$ we use the standard
Legendre symbol $\left(\frac{x}{p}\right)$\marginpar{$\left(\frac{x}{p}\right)$}
to denote the image of $x$ under the character of order 2. Namely,
\[
\left(\frac{x}{p}\right)=\begin{cases}
1 & x~\mathrm{is~a~square~in~\mathbb{F}_{p}^{*}}\\
-1 & x~\mathrm{is~a~non-square~in}~\mathbb{F}_{p}^{*}\\
0 & x=0
\end{cases}.
\]
\item The notation $\left|x\right|$ is used to denote the order of the
group element $x\in G$ in the group $G$.
\end{itemize}

\subsection*{Rotation elements}

The action of $\rot_{1}$ on the conic section $C_{1}\left(x\right)\subseteq X^{*}\left(p\right)$
is a linear map on the last two coordinates given by the matrix
\begin{equation}
\left(\begin{array}{cc}
0 & 1\\
-1 & x
\end{array}\right).\label{eq:rot1 as matrix}
\end{equation}
The eigenvalues of this matrix are given by $\frac{x\pm\sqrt{x^{2}-4}}{2}$.
This leads to the following definitions and lemmas from \cite{BGS-I2016markoff}:
\begin{defn}
\label{def:hyper-ellip-parab}
\begin{itemize}
\item An element $x\in\mathbb{F}_{p}$ if called \emph{hyperbolic}\marginpar{\emph{hyperbolic}}\emph{
}if $\left(x^{2}-4\right)$ is a square in $\mathbb{F}_{p}^{*}$.
\item An element $x\in\mathbb{F}_{p}$ if called \emph{elliptic}\marginpar{\emph{elliptic}}\emph{
}if $\left(x^{2}-4\right)$ is a non-square in $\mathbb{F}_{p}^{*}$.
\item An element $x\in\mathbb{F}_{p}$ if called \emph{parabolic}\marginpar{\emph{parabolic}}\emph{
}if $\left(x^{2}-4\right)=0$ in $\mathbb{F}_{p}$, namely, if $x=\pm2$.
\end{itemize}
\end{defn}
Notice that this categorization of the elements is invariant under
sign change $x\mapsto-x$. The following lemmas are based on Lemmas
3-5 of \cite{BGS-I2016markoff} which describe the action of $\rot_{i}$
on $X^{*}\left(p\right)$. We adapt them below in order to describe
the action of $\rot_{i}$ on $Y^{*}\left(p\right)$ and add some further
details, all follow easily from Section 2.1 in \cite{BGS-I2016markoff}.
We state the lemmas for $C_{1}\left(\pm x\right)$, but the same statements
holds, evidently, for $C_{2}\left(\pm x\right)$ and for $C_{3}\left(\pm x\right)$.
\begin{lem}
\label{lem:rotations a la BGS p=00003D1(4)}\cite[Lemmas 3-5]{BGS-I2016markoff}
Let $p\equiv1\left(4\right)$ be prime. Then,
\begin{itemize}
\item $\left|Y^{*}\left(p\right)\right|=\frac{p\left(p+3\right)}{4}.$
\item $\left|C_{1}\left(\pm2\right)\right|=p$; The permutation induced
by $\rot_{1}$ on $C_{1}\left(\pm2\right)$ consists of a single $p$-cycle. 
\item There are $\frac{p-1}{4}$ hyperbolic elements up to sign. For $x$
hyperbolic, $\left|C_{1}\left(\pm x\right)\right|=\frac{p-1}{2}$.
Let $\omega^{\pm1}\in\mathbb{F}_{p}$ be the eigenvalues of the matrix
(\ref{eq:rot1 as matrix}), so $x=\omega+\omega^{-1}$. The permutation
induced by $\rot_{1}$ on $C_{1}\left(\pm x\right)$ consists of $\frac{p-1}{2d}$
cycles of length $d$ each, where $d=\frac{\max\left(\left|\omega\right|,\left|-\omega\right|\right)}{2}$
and $\left|\omega\right|$ is the order of $\omega$ in the multiplicative
group $\mathbb{F}_{p}^{*}$. The solutions in $C_{1}\left(x\right)$
have the form $\left(x,\alpha+\beta,\alpha\omega+\beta\omega^{-1}\right)$
for $\alpha,\beta\in\mathbb{F}_{p}^{*}$ with $\alpha\beta=\frac{x^{2}}{x^{2}-4}$,
and 
\begin{equation}
\left(x,\alpha+\beta,\alpha\omega+\beta\omega^{-1}\right)\stackrel{\rot_{1}}{\mapsto}\left(x,\alpha\omega+\beta\omega^{-1},\alpha\omega^{2}+\beta\omega^{-2}\right).\label{eq:rot1 on solutions - hyper}
\end{equation}
\item There are $\frac{p-1}{4}$ elliptic elements up to sign. For $x$
elliptic, $\left|C_{1}\left(\pm x\right)\right|=\frac{p+1}{2}$. Define
$\omega$ as for hyperbolic elements by $x=\omega+\omega^{-1}$, only
now $\omega\in\mathbb{F}_{p^{2}}\setminus\mathbb{F}_{p}$. The permutation
induced by $\rot_{1}$ on $C_{1}\left(\pm x\right)$ consists of $\frac{p+1}{2d}$
cycles of length $d$ each, where $d=\frac{\max\left(\left|\omega\right|,\left|-\omega\right|\right)}{2}$
and $\left|\omega\right|$ is the order of $\omega$ in the multiplicative
group $\mathbb{F}_{p^{2}}^{~*}$. Moreover, $\omega^{p+1}=1$, i.e.~$\left|\omega\right|~|~\left(p+1\right)$.
The solutions in $C_{1}\left(x\right)$ have the form $\left(x,A+A^{p},A\omega+A^{p}\omega^{-1}\right)$
with $A\in\mathbb{F}_{p^{2}}^{*}$ and $A^{p+1}=\frac{x^{2}}{x^{2}-4}$,
and 
\begin{equation}
\left(x,A+A^{p},A\omega+A^{p}\omega^{-1}\right)\stackrel{\rot_{1}}{\mapsto}\left(x,A\omega+A^{p}\omega^{-1},A\omega^{2}+A^{p}\omega^{-2}\right).\label{eq:rot1 on solutions-ellip}
\end{equation}
\end{itemize}
\end{lem}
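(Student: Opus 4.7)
My plan is to deduce each bullet of the lemma from the action of $\rot_{1}$ on $X^{*}\left(p\right)$ already established in \cite[Lemmas 3--5]{BGS-I2016markoff}, by carefully tracking how the four-element block relation identifies solutions inside and between the conic sections $C_{1}\left(x\right)$ and $C_{1}\left(-x\right)$. For each type of first coordinate ($x=\pm2$, hyperbolic, or elliptic) I would first describe $C_{1}\left(x\right)\subseteq X^{*}\left(p\right)$ together with the $\rot_{1}$-cycles on it, and only then pass to the block quotient.

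The non-parabolic case is handled uniformly by diagonalizing the matrix in (\ref{eq:rot1 as matrix}). Writing $x=\omega+\omega^{-1}$ with $\omega\in\mathbb{F}_{p}^{*}\setminus\{\pm 1\}$ (hyperbolic) or $\omega\in\mathbb{F}_{p^{2}}\setminus\mathbb{F}_{p}$ (elliptic, where the Frobenius constraint forces $\omega^{p}=\omega^{-1}$, hence $\omega^{p+1}=1$), the eigenbasis produces coordinates $\left(\alpha,\beta\right)$ in which $\rot_{1}$ acts diagonally by $\left(\alpha,\beta\right)\mapsto\left(\omega\alpha,\omega^{-1}\beta\right)$. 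Substituting $y=\alpha+\beta$, $z=\alpha\omega+\beta\omega^{-1}$ into (\ref{eq:markoff}) collapses the defining equation to the hyperbola $\alpha\beta=\frac{x^{2}}{x^{2}-4}$ (in the elliptic case one adds the reality condition $\alpha^{p}=\beta$, equivalently $A^{p+1}=\frac{x^{2}}{x^{2}-4}$). This yields at once the sizes of $C_{1}\left(x\right)$ in $X^{*}\left(p\right)$ and the explicit formulas (\ref{eq:rot1 on solutions - hyper}) and (\ref{eq:rot1 on solutions-ellip}). In these coordinates the block involution $\left(x,y,z\right)\sim\left(x,-y,-z\right)$ becomes $\left(\alpha,\beta\right)\leftrightarrow\left(-\alpha,-\beta\right)$, while $\left(x,y,z\right)\sim\left(-x,y,-z\right)$ identifies the two conic sections $C_{1}\left(x\right)$ and $C_{1}\left(-x\right)$; together these give $\left|C_{1}\left(\pm x\right)\right|$ in $Y^{*}\left(p\right)$ equal to $\left|C_{1}\left(x\right)\right|/2$ in $X^{*}\left(p\right)$ whenever $x\neq0$.

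For cycle lengths in $Y^{*}$, the $X^{*}$-orbit of $\left(\alpha,\beta\right)$ under $\rot_{1}$ has length $\left|\omega\right|$, and in $Y^{*}\left(p\right)$ it merges with the orbit of $\left(-\alpha,-\beta\right)$ iff some $\rot_{1}^{k}$ exchanges the two, iff $\omega^{k}=-1$ for some $k$. This happens precisely when $\left|\omega\right|$ is even, in which case the $Y^{*}$-cycle has length $\left|\omega\right|/2$; otherwise it retains length $\left|\omega\right|$. A one-line parity check (using $\left|-\omega\right|=\left|\omega\right|$ when $\left|\omega\right|$ is even and $\left|-\omega\right|=2\left|\omega\right|$ when $\left|\omega\right|$ is odd) repackages both cases uniformly as $d=\max\left(\left|\omega\right|,\left|-\omega\right|\right)/2$, giving the claimed number of cycles $\left|C_{1}\left(\pm x\right)\right|/d$. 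The parabolic case $x=\pm 2$ needs a direct treatment since diagonalization fails: since $p\equiv 1\,(4)$ yields $i\in\mathbb{F}_{p}$, solving $\left(y\mp z\right)^{2}=-4$ gives $\left|C_{1}\left(2\right)\right|=2p$ in $X^{*}\left(p\right)$, the unipotent matrix acts as a single $p$-cycle on each of the two branches $y-z=\pm 2i$, and these two branches are identified by the block involution $\left(x,y,z\right)\sim\left(-x,y,-z\right)$, producing one $p$-cycle on $C_{1}\left(\pm 2\right)$ in $Y^{*}\left(p\right)$. A small separate check handles $x=0$ (which is hyperbolic here because $-4$ is a square): the parametrization degenerates to $\alpha\beta=0$, but blocks now intersect $C_{1}\left(0\right)$ in four elements rather than two, so the formula $\left|C_{1}\left(\pm 0\right)\right|=\left(p-1\right)/2$ still holds.

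Finally, the counts of $\left(p-1\right)/4$ hyperbolic and $\left(p-1\right)/4$ elliptic first coordinates up to sign follow from the standard character-sum identity $\sum_{x\in\mathbb{F}_{p}}\left(\frac{x^{2}-4}{p}\right)=-1$ together with the self-pairing of $x=0$; summing $p+\tfrac{p-1}{4}\cdot\tfrac{p-1}{2}+\tfrac{p-1}{4}\cdot\tfrac{p+1}{2}$ collapses to $\tfrac{p\left(p+3\right)}{4}=\left|Y^{*}\left(p\right)\right|$, providing an internal consistency check. The only real obstacle is combinatorial bookkeeping: the several $\pm$-identifications in the block relation introduce small factor-of-2 traps, especially at the degenerate values $x=0$ and $x=\pm 2$, and the correct description of the $Y^{*}$-cycle length genuinely requires the slightly asymmetric formula $\max\left(\left|\omega\right|,\left|-\omega\right|\right)/2$ rather than the naive $\left|\omega\right|/2$.
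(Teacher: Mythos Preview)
Your proposal is correct and carries out precisely what the paper intends: the paper gives no self-contained proof of this lemma but simply cites \cite[Lemmas 3--5]{BGS-I2016markoff} and remarks that the passage from $X^{*}(p)$ to $Y^{*}(p)$ ``follows easily,'' which is exactly the block-quotient bookkeeping you perform. One small slip: in the parabolic case the involution that identifies the two branches $y-z=\pm 2i$ of $C_{1}(2)$ with one another is $(x,y,z)\sim(x,-y,-z)$, not $(x,y,z)\sim(-x,y,-z)$ (the latter sends $C_{1}(2)$ to $C_{1}(-2)$); this does not affect your conclusion, since each of the four branches in $C_{1}(2)\cup C_{1}(-2)$ already bijects onto $C_{1}(\pm 2)\subset Y^{*}(p)$.
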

We sum up the content of Lemma \ref{lem:rotations a la BGS p=00003D1(4)}
in Table \ref{tab:1 mod 4}.

\begin{table}
\begin{tabular}{|>{\centering}p{2cm}|>{\centering}p{1.5cm}|>{\centering}p{1.5cm}|>{\centering}p{2.2cm}|>{\centering}p{7cm}|}
\hline 
type of $x$ & \# $x$'s up to sign & $\left|C_{1}\left(\pm x\right)\right|$ &  & cycle-structure for $\rot_{1}\Big|_{C_{1}\left(\pm x\right)}$\tabularnewline
\hline 
\hline 
parabolic & 1 & $p$ & $x=\pm2$ & a single $p$-cycle\tabularnewline
\hline 
hyperbolic (including $0$) & $\frac{p-1}{4}$ & $\frac{p-1}{2}$ & $x=\omega+\omega^{-1}$, $\omega\in\mathbb{F}_{p}^{*}\setminus\left\{ \pm1\right\} $ & For every $1\ne d~|~\frac{p-1}{2}$, there are $\left\lceil \frac{\varphi\left(d\right)}{2}\right\rceil $
hyperbolic $\pm x$ such that $\rot_{1}\Big|_{C_{1}\left(\pm x\right)}$
has $\frac{p-1}{2d}$ cycles of length $d$ each. (If $d$ is odd,
$\left|w\right|\in\left\{ d,2d\right\} $, if $d$ is even, $\left|w\right|=2d$.)\tabularnewline
\hline 
elliptic & $\frac{p-1}{4}$ & $\frac{p+1}{2}$ & $x=\omega+\omega^{-1}$ $\omega\in\mathbb{F}_{p^{2}}^{~*}\setminus\left\{ \pm1\right\} $
$\omega^{p+1}=1$ & For every $1\ne d~|~\frac{p+1}{2}$, there are $\frac{\varphi\left(d\right)}{2}$
elliptic $\pm x$ such that $\rot_{1}\Big|_{C_{1}\left(\pm x\right)}$
has $\frac{p+1}{2d}$ cycles of length $d$ each. ($\left|\omega\right|\in\left\{ d,2d\right\} $) \tabularnewline
\hline 
\end{tabular}\caption{The structure of $\protect\rot_{1}\in Q_{p}$ when $p\equiv1\left(4\right)$,
as follows from Lemma \ref{lem:rotations a la BGS p=00003D1(4)}.
In the rightmost column, every set $\left\{ x,-x\right\} $ is counted
once.\label{tab:1 mod 4}}
\end{table}

When $p\equiv3\left(4\right)$, our results are somewhat weaker and
the proofs more involved. The main reason for that is the lack of
solutions with the parabolic elements $\pm2$:
\begin{lem}
\label{lem:rotations a la BGS p=00003D3(4)}\cite[Lemmas 3-5]{BGS-I2016markoff}
Let $p\equiv3\left(4\right)$ be prime. Then,
\begin{itemize}
\item $\left|Y^{*}\left(p\right)\right|=\frac{p\left(p-3\right)}{4}$
\item There are no solutions in $Y^{*}\left(p\right)$ involving the parabolic
elements $\pm2$, nor the elliptic element $0$.
\item There are $\frac{p-3}{4}$ hyperbolic elements up to sign. For $x$
hyperbolic, the size and structure of $C_{1}\left(\pm x\right)$ and
the action of $\rot_{1}$ on $C_{1}\left(\pm x\right)$ have the same
properties as for $x$ hyperbolic when $p\equiv1\left(4\right)$ (see
Lemma \ref{lem:rotations a la BGS p=00003D1(4)}).
\item There are $\frac{p-3}{4}$ non-zero elliptic elements up to sign.
For $x$ elliptic, the size and structure of $C_{1}\left(\pm x\right)$
and the action of $\rot_{1}$ on $C_{1}\left(\pm x\right)$ have the
same properties as for $x$ elliptic when $p\equiv1\left(4\right)$
(see Lemma \ref{lem:rotations a la BGS p=00003D1(4)}).
\end{itemize}
\end{lem}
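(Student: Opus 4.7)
My plan is to mirror the proof of Lemma \ref{lem:rotations a la BGS p=00003D1(4)} (Lemmas 3--5 of \cite{BGS-I2016markoff}) and isolate the two arithmetic facts that change when $p\equiv3\left(4\right)$: the elements $-1$ and $-4$ are non-squares in $\mathbb{F}_{p}$. In particular, $u^{2}+v^{2}=0$ in $\mathbb{F}_{p}$ forces $u=v=0$.

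First, I would rule out the parabolic and zero conic sections. If $x=\pm2$, the Markoff equation $4+y^{2}+z^{2}=\pm2yz$ rearranges to $\left(y\mp z\right)^{2}=-4$, which has no solution, so $C_{1}\left(\pm2\right)=\emptyset$. If $x=0$, the equation becomes $y^{2}+z^{2}=0$, forcing $\left(y,z\right)=\left(0,0\right)$, so $C_{1}\left(0\right)=\emptyset$. This proves the second bullet.

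Second, I would count sign classes. Let $H$ and $E$ be the number of hyperbolic and elliptic elements in $\mathbb{F}_{p}$; both sets are invariant under $x\mapsto-x$, and $H+E=p-2$. The standard character sum
\[
\sum_{x\in\mathbb{F}_{p}}\left(\frac{x^{2}-4}{p}\right)=-1
\]
(a special case of $\sum_{x}\left(\frac{f\left(x\right)}{p}\right)=-\left(\frac{a}{p}\right)$ for a quadratic $f$ of leading coefficient $a$ with non-zero discriminant) gives $H-E=-1$, so $H=\frac{p-3}{2}$ and $E=\frac{p-1}{2}$. Since $-4$ is a non-square, $0$ is elliptic; removing it leaves $\frac{p-3}{2}$ non-zero elliptic elements, and pairing each non-zero element with its negative yields $\frac{p-3}{4}$ sign classes of each type.

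Third, for $x$ hyperbolic or elliptic, the diagonalization of the matrix (\ref{eq:rot1 as matrix}), the parametrization $x=\omega+\omega^{-1}$ over $\mathbb{F}_{p}$ or $\mathbb{F}_{p^{2}}$, and the resulting cycle structure of multiplication by $\omega$ depend only on whether $x^{2}-4$ is a non-zero square or a non-square, not on $p\bmod 4$. Hence the formulas for $\left|C_{1}\left(\pm x\right)\right|$ and for the cycle structure of $\rot_{1}|_{C_{1}\left(\pm x\right)}$ transfer verbatim from Lemma \ref{lem:rotations a la BGS p=00003D1(4)}, and summing gives
\[
\left|Y^{*}\left(p\right)\right|=\tfrac{p-3}{4}\cdot\tfrac{p-1}{2}+\tfrac{p-3}{4}\cdot\tfrac{p+1}{2}=\tfrac{p\left(p-3\right)}{4}.
\]
The only step with any subtlety is the character sum, which is classical; everything else is a direct translation of the $p\equiv1\left(4\right)$ argument.
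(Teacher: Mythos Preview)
Your argument is correct. Note, however, that the paper does not supply its own proof of this lemma: it is quoted from \cite[Lemmas 3--5]{BGS-I2016markoff}, with the remark that the adaptations ``all follow easily from Section 2.1 in \cite{BGS-I2016markoff}''. So there is no paper-proof to compare against beyond that citation.

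That said, your write-up is exactly the kind of self-contained verification one would want. The two arithmetic facts you isolate --- that $-1$ and $-4$ are non-squares when $p\equiv3\left(4\right)$ --- are precisely what kills the parabolic sections $C_{1}\left(\pm2\right)$ and the zero section $C_{1}\left(0\right)$, and your character-sum count $\sum_{x}\left(\frac{x^{2}-4}{p}\right)=-1$ cleanly yields $H=\frac{p-3}{2}$, $E=\frac{p-1}{2}$. The only place worth a word of extra care is the sentence ``transfer verbatim'': the parametrizations (\ref{eq:rot1 on solutions - hyper}) and (\ref{eq:rot1 on solutions-ellip}) and the cycle-length formula $d=\frac{\max\left(\left|\omega\right|,\left|-\omega\right|\right)}{2}$ indeed use only that $x^{2}-4$ is a non-zero square (resp.\ non-square), together with $\alpha\beta=\frac{x^{2}}{x^{2}-4}\ne0$ (resp.\ $A^{p+1}=\frac{x^{2}}{x^{2}-4}$), all of which are unaffected by $p\bmod4$ once $x\ne0,\pm2$. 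Your final summation to $\left|Y^{*}\left(p\right)\right|=\frac{p\left(p-3\right)}{4}$ is then immediate.
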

We sum up the content of Lemma \ref{lem:rotations a la BGS p=00003D3(4)}
in Table \ref{tab:3 mod 4}.

\begin{table}
\begin{tabular}{|>{\centering}p{2.3cm}|>{\centering}m{1.5cm}|>{\centering}m{1.4cm}|>{\centering}p{2.7cm}|>{\centering}p{6cm}|}
\hline 
type of $x$ & \# $x$'s up to sign & $\left|C_{1}\left(\pm x\right)\right|$ & eigenvalues of $\rot_{1}$ & cycle-structure of $\rot_{1}\Big|_{C_{1}\left(\pm x\right)}$\tabularnewline
\hline 
\hline 
hyperbolic $\left(\frac{x^{2}-4}{p}\right)=1$ & $\frac{p-3}{4}$ & $\frac{p-1}{2}$ & $\omega\in\mathbb{F}_{p}^{*}\setminus\left\{ \pm1\right\} $ $x=\omega+\omega^{-1}$ & For every $1\ne d~|~\frac{p-1}{2}$, there are $\frac{\varphi\left(d\right)}{2}$
hyperbolic $\pm x$ such that $\rot_{1}\Big|_{C_{1}\left(\pm x\right)}$
has $\frac{p-1}{2d}$ cycles of length $d$ each. ($\left|w\right|\in\left\{ d,2d\right\} $)\tabularnewline
\hline 
elliptic (exc.~$0$): $x\ne0$ \& $\left(\frac{x^{2}-4}{p}\right)=-1$ & $\frac{p-3}{4}$ & $\frac{p+1}{2}$ & $\omega\in\mathbb{F}_{p^{2}}^{*}\setminus\left\{ \pm1,\pm i\right\} $
$x=\omega+\omega^{-1}$ $\omega^{p+1}=1$ & For every $3\le d~|~\frac{p+1}{2}$, there are $\frac{\varphi\left(d\right)}{2}$
elliptic $\pm x$ such that $\rot_{1}\Big|_{C_{1}\left(\pm x\right)}$
has $\frac{p+1}{2d}$ cycles of length $d$ each. (If $d$ is odd,
$\left|\omega\right|\in\left\{ d,2d\right\} $, if $d$ is even, $\left|\omega\right|=2d$.)\tabularnewline
\hline 
\end{tabular}\caption{The structure of $\protect\rot_{1}\in Q_{p}$ when $p\equiv3\left(4\right)$,
as follows from Lemma \ref{lem:rotations a la BGS p=00003D3(4)}\label{tab:3 mod 4}}
\end{table}

For $x\in\mathbb{F}_{p}$, denote by $d_{p}\left(\pm x\right)$\marginpar{$d_{p}\left(\pm x\right)$}
the order of $\rot_{1}\in Q_{p}$ in its action on $C_{1}\left(\pm x\right)$.
Namely, the solutions with first coordinate $\pm x$ in $Y^{*}\left(p\right)$
belong to cycles of length $d_{p}\left(\pm x\right)$.

\section{Alternating Group for $p\equiv1\left(4\right)$ \label{sec:Alternating-Group-for 1(4)}}

This section contains the proof of Theorem \ref{thm:1 mod 4}, which
states that if $p\equiv1\left(4\right)$ and $Q_{p}$ is transitive,
then $Q_{p}$ contains the entire alternating group $\mathrm{Alt}\left(Y^{*}\left(p\right)\right)$.
As mentioned above, the existence of parabolic elements when $p\equiv1\left(4\right)$
allows a rather short argument in this case.

We use the following classical theorem of Jordan:
\begin{thm}[{Jordan \cite[Thm 3.3E]{dixon1996permutation}}]
\label{thm:Jordan} Let $G\le\mathrm{Sym}\left(n\right)$ be a primitive
permutation group containing a cycle of prime length $p\le n-3$.
Then $G\ge\mathrm{Alt}\left(n\right)$.
\end{thm}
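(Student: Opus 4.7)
The plan is to prove this classical Jordan theorem using the framework of \emph{Jordan sets}. Call a proper subset $\Delta \subsetneq \Omega = \{1,\ldots,n\}$ a Jordan set for $G$ if $|\Omega \setminus \Delta| \ge 2$ and the pointwise stabilizer $G_{(\Omega \setminus \Delta)}$ acts transitively on $\Delta$. First I would observe that the support $S$ of the given $p$-cycle $\sigma$ is a Jordan set: $\langle \sigma \rangle \le G_{(\Omega \setminus S)}$ acts transitively on $S$, and the hypothesis $p \le n-3$ gives $|\Omega \setminus S| \ge 3$.

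The heart of the argument is an amalgamation step: if $G$ is primitive and $\Delta_1, \Delta_2$ are Jordan sets with $\Delta_1 \cap \Delta_2 \ne \emptyset$ and $\Delta_1 \cup \Delta_2 \subsetneq \Omega$, then $\Delta_1 \cup \Delta_2$ is again a Jordan set. Since every $G$-translate of a Jordan set is a Jordan set, and primitivity of $G$ produces many overlapping translates of $S$, I would iteratively amalgamate translates of $S$ to enlarge it to a Jordan set of size $n-2$. Combined with the transitivity of $G$, this enlargement upgrades to $(n-2)$-transitivity of $G$ on $\Omega$.

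To close, I would argue that a sufficiently highly transitive primitive group containing a cycle of prime length $p \le n-3$ must contain a $3$-cycle: by high transitivity one can conjugate $\sigma$ to overlap it with itself in controlled ways, and commutators of such overlapping conjugates yield short cycles which can be trimmed to a $3$-cycle. Finally, a transitive group containing a $3$-cycle contains all $3$-cycles, so $\mathrm{Alt}(n) \le G$.

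The main obstacle will be the amalgamation lemma, where at each stage primitivity of $G$ must be invoked to rule out that the enlarged Jordan set is a union of blocks of an invariant partition. This interaction between enlarging the Jordan set and preserving primitivity is the delicate combinatorial core of Jordan's original proof, and it is exactly the place where both hypotheses (primitivity of $G$ and $p$ being prime) are used in an essential way.
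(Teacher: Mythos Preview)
The paper does not prove this theorem at all: it is quoted as a classical result of Jordan with a citation to Dixon--Mortimer, and is used only as a black box in the proof of Theorem~\ref{thm:1 mod 4}. So there is no ``paper's proof'' to compare against; I will simply assess your outline on its own terms.

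Your general strategy via Jordan sets and amalgamation is indeed the standard one (and is the one presented in Dixon--Mortimer), but there is a concrete slip in the transitivity bookkeeping. A Jordan set $\Delta$ of size $n-2$ does \emph{not} give $(n-2)$-transitivity: it says that the pointwise stabilizer of two points is transitive on the remaining $n-2$, which (together with $2$-transitivity) yields only $3$-transitivity. In general, a Jordan set of size $k$ contributes $(n-k+1)$-transitivity, so enlarging $S$ gives \emph{weaker} transitivity, not stronger. The correct flow is to show that for every $k$ with $p\le k\le n-1$ there is a Jordan set of size $k$; the smallest one, namely $S$ itself of size $p$, then yields $(n-p+1)$-transitivity, which is at least $4$ because $p\le n-3$. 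The key step that lets you go from a Jordan set $\Delta$ to one of size $|\Delta|+1$ (rather than some uncontrolled larger size) is precisely where the primality of $p$ enters: it forces $G_{(\Omega\setminus S)}$ to act \emph{primitively} on $S$, and this primitivity is what propagates through the one-step enlargements. It is not, as you suggest, used in the amalgamation lemma itself (that lemma needs no hypothesis on $G$ beyond $\Delta_1\cup\Delta_2\subsetneq\Omega$).

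Your closing step---producing a $3$-cycle from a highly transitive group containing a $p$-cycle with $p\le n-3$---is correct in spirit but would need to be made precise; the clean version is that once $G$ is $(n-p+1)$-transitive, the pointwise stabilizer of $n-p-2$ suitably chosen points is $3$-transitive on the remaining $p+2$ points and still contains the $p$-cycle, and a $3$-transitive group of degree $m$ containing an $(m-2)$-cycle is easily seen to contain a $3$-cycle by the overlapping-conjugate trick you describe.
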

\begin{proof}[Proof of Theorem \ref{thm:1 mod 4}]
 Assume $p\equiv1\left(4\right)$, and let $\rot_{1}\in Q_{p}$ be
the rotation element defined on Page \pageref{rotations}. This element
has one $p$-cycle, while all its other cycles have length coprime
to $p$ (see Table \ref{tab:1 mod 4}). Thus its power $\sigma=\rot_{1}^{~\left|\rot_{1}\right|/p}\in Q_{p}$
is a $p$-cycle. As $\left|Y^{*}\left(p\right)\right|=\frac{p\left(p+3\right)}{4}\ge p+3$,
it is now sufficient to show, by Jordan's Theorem (Theorem \ref{thm:Jordan}
above), that $Q_{p}$ is primitive in $\mathrm{Sym}\left(Y^{*}\left(p\right)\right)$. 

We need to show that the group $Q_{p}$ preserves no non-trivial block
structure. Assume there is a block structure $\left\{ B_{1},\ldots,B_{m}\right\} $
preserved by $Q_{p}$. So $\bigcup B_{i}=Y^{*}\left(p\right)$ and
$B_{i}\cap B_{j}=\emptyset$ for $i\ne j$, and for every $g\in Q_{p}$
and every $i$, $g\left(B_{i}\right)=B_{j}$ for some $j$.

Consider $C_{1}\left(\pm2\right)\subset Y^{*}\left(p\right)$, the
$p$ elements contained in the cycle of size $p$ in $\sigma$. The
set $C_{1}\left(\pm2\right)$ must be contained in a block, for otherwise
it has to be the union of several equally-sized blocks, but $p$ is
prime. Say $C_{1}\left(\pm2\right)\subseteq B_{1}$. So $B_{1}$ contains
all solutions with $\pm2$ in the first coordinate. In particular,
it contains $\left[2,2,2+2i\right]$ and $\left[2,2+2i,2\right]$.
But the same argument with $\rot_{2}$ and $\rot_{3}$ shows that
$B_{1}$ contains all solutions with $\pm2$ in any coordinate. So
$B_{1}$ is invariant under all three rotations and under all permutations
of coordinates, and therefore invariant under the action of the whole
group $Q_{p}$. By the transitivity of $Q_{p}$, $B_{1}=Y^{*}\left(p\right)$.
\end{proof}
\begin{rem}
The proof of Theorem \ref{thm:BGS} in \cite{BGS-I2016markoff} shows
that for \emph{every prime $p$}, the large component of $X^{*}\left(p\right)$
contains all solutions with parabolic $\left(\pm2\right)$ coordinates.
Thus, our proof of Theorem \ref{thm:1 mod 4} applies to the general
case: the group $\Gamma$ acts on the large component of $Y^{*}\left(p\right)$
as the alternating or symmetric group.
\end{rem}

\section{Alternating Group for $p\equiv3$ $\left(4\right)$\label{sec:Alternating-Group-for 3 (4)}}

In the case where $p\equiv3\left(4\right)$, there are no parabolic
elements, and in Sections \ref{subsec:Primitivity-for-p=00003D3(4)}
and \ref{subsec:properties of blocks} we establish the primitivity
of $Q_{p}$ for density-1 of these primes rather than for all those
outside the exceptional set from Theorem \ref{thm:BGS}. We also rely
on much deeper theorems, involving the classification of finite simple
groups (CFSG), to conclude in Section \ref{subsec:primitivity =00003D=00003D> Alt}
that whenever $Q_{p}$ is primitive, it contains $\mathrm{Alt}\left(Y^{*}\left(p\right)\right)$.
Throughout this section, we assume that $p\equiv3\left(4\right)$.

\subsection{Primitivity of $Q_{p}$ when $p\equiv3$$\left(4\right)$\label{subsec:Primitivity-for-p=00003D3(4)}}

In this subsection we prove that under the assumptions of Theorem
\ref{thm:3 mod 4: alternating given specific conditions}, the permutation
group $Q_{p}$ is primitive. Namely,
\begin{thm}
\label{thm:primitivity for p=00003D3(4)}Let $p$ be prime with $p\equiv3\left(4\right)$.
Assume that $Q_{p}$ is transitive and that the order of $\frac{3+\sqrt{5}}{2}\in\mathbb{F}_{p^{2}}$
is at least $32\sqrt{p+1}$. Then $Q_{p}$ is primitive.
\end{thm}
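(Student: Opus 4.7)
I plan to argue by contradiction: suppose $Q_{p}$ preserves a non\-trivial system of blocks $\{B_{1},\dots,B_{m}\}$ (so $1<|B_{i}|<|Y^{*}(p)|$), and let $B$ be the block containing the distinguished solution $[3,3,3]$.

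First, observe that every coordinate permutation $\tau_{\sigma}$ (with $\sigma\in S_{3}$) fixes $[3,3,3]$, and therefore preserves $B$ setwise; thus $B$ is $S_{3}$-invariant. Conjugation by $S_{3}$ cyclically permutes the three rotations, $\tau_{\sigma}\rot_{j}\tau_{\sigma}^{-1}=\rot_{\sigma(j)}$, so the three orbit-sizes $c_{j}:=|\mathrm{Orb}_{\rot_{j}}(B)|$ coincide in a common value $c$. Because $\rot_{j}^{c}$ stabilizes $B$ setwise and acts on each $\rot_{j}$-cycle meeting $B$, the integer $c$ divides the length of every such cycle. In particular $c\mid d$, where $d:=d_{p}(\pm3)$ is the length of the $\rot_{j}$-cycle through $[3,3,3]$. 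By Lemma \ref{lem:rotations a la BGS p=00003D3(4)} together with the hypothesis on $|\omega|$ (where $\omega=\tfrac{3+\sqrt{5}}{2}$), we obtain $d\ge|\omega|/2\ge 16\sqrt{p+1}$.

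Next, I analyze the three cycles $C_{j}\subseteq C_{j}(\pm3)$ of length $d$ through $[3,3,3]$. Writing $\rot_{1}^{k}[3,3,3]=[3,a_{k},a_{k+1}]$, the sequence $a_{k}$ satisfies $a_{k+1}=3a_{k}-a_{k-1}$ with $a_{0}=a_{1}=3$, giving the closed form
\[
a_{k}=\frac{3(\omega^{k}+\omega^{1-k})}{\omega+1}.
\]
The equation $a_{k}=\pm3$ reduces to $\omega^{k}\in\{1,\omega,-1,-\omega\}$, and hence has at most four solutions modulo the period. This shows $|C_{i}\cap C_{j}|$ is bounded by an absolute constant for $i\ne j$. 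Since $|B\cap C_{j}|=d/c$ for each $j$, inclusion--exclusion yields the lower bound $|B|\ge 3d/c-O(1)$.

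Finally, I combine this with the counting identity $|B|\cdot m=|Y^{*}(p)|=p(p-3)/4$ and the divisibility $c\mid m$. The two endpoint cases are handled directly. If $c=1$, then $B$ is invariant under all of $\rot_{1},\rot_{2},\rot_{3}$ and $S_{3}$, hence under $Q_{p}$; by transitivity $B=Y^{*}(p)$, contradicting non-triviality. If $c=d$, then every point $(x,y,z)\in B$ forces $c\mid d_{p}(\pm x),d_{p}(\pm y),d_{p}(\pm z)$, but each $d_{p}(\pm u)\le(p+1)/2$, so the bound $d\ge 16\sqrt{p+1}$ leaves only a bounded set of admissible triples, contradicting $|B|\ge 2$. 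The intermediate range $1<c<d$ is the main obstacle: closing it requires refining the lower bound on $|B|$ by bringing in additional conic sections $C_{j}(\pm x)$ met by $B$ and exploiting $c\mid d_{p}(\pm x)$ for every such $x$. This finer ``autocorrelation'' analysis of the blocks --- indicated in Figure \ref{fig:line of arguments} --- is precisely the content I expect to develop in Section \ref{subsec:properties of blocks}.
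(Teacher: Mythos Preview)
Your plan shares the right starting move with the paper --- take the block $B$ through $[3,3,3]$ and use its $S_3$-invariance --- but then heads in a direction that does not close, and the paper's actual argument is structurally different from the counting framework you set up.

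\medskip

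\textbf{Where your plan breaks down.} Your $c=d$ case is not correctly argued. From $c\mid d_p(\pm x)$ and $d_p(\pm x)\le (p+1)/2$ you cannot conclude that only boundedly many $\pm x$ are admissible: the number of multiples of $d$ below $(p+1)/2$ is roughly $(p+1)/(2d)\asymp\sqrt{p}$, and for each such multiple $d'$ there are $\varphi(d')/2$ values of $\pm x$. So the set of admissible first coordinates is far from bounded, and the conclusion ``contradicting $|B|\ge 2$'' does not follow. More seriously, you explicitly leave the whole range $1<c<d$ open, and your proposed tool for closing it --- pushing the lower bound $|B|\ge 3d/c - O(1)$ by adding more conic sections --- is not how the paper proceeds and it is not clear it can be made to work.

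\medskip

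\textbf{What the paper actually does.} The paper never tries to lower-bound $|B|$. Instead it proves two structural facts about proper blocks (Propositions \ref{prop:2 homogenous coordinates} and \ref{prop:high-order =00003D=00003D> no self-correlation}): first, at least two of the three coordinates are type-homogeneous in any proper block; second, via a Weil character-sum estimate, if $d_p(\pm x)\ge 16\sqrt{p+1}$ then no proper block contains two solutions with the same $j$-th coordinate $\pm x$. From this second fact one gets immediately (Corollary \ref{cor:coordinates are homogenous in order of elements}) that every coordinate value occurring in $B$ has $d_p$ equal to $d_p(\pm 3)$. Now take any $[a,b,c]\ne[3,3,3]$ in $B$; since $[3,3,3]$ is the unique solution with all coordinates equal up to sign, we may assume $\{\pm b\}\ne\{\pm c\}$. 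Because $\tau_{(23)}$ fixes $[3,3,3]$ it fixes $B$, so $[a,c,b]\in B$ as well --- but now $B$ contains two distinct solutions with first coordinate $\pm a$ and $d_p(\pm a)\ge 16\sqrt{p+1}$, contradicting Proposition \ref{prop:high-order =00003D=00003D> no self-correlation}. That is the whole argument: no case split on $c$, no inclusion--exclusion, no size estimates for $|B|$. The ``autocorrelation'' input you allude to is used not to refine a counting bound but to force an immediate contradiction through the $\tau_{(23)}$-symmetry.
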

To establish primitivity of $Q_{p}$, one needs to show there are
no non-trivial blocks in the action of $Q_{p}$ on $Y^{*}\left(p\right)$:
a block is a subset $B\subseteq Y^{*}\left(p\right)$, such that for
every $g\in Q_{p}$, either $g.B=B$ or $g.B\cap B=\emptyset$. As
$Q_{p}$ is assumed to be transitive, if $B$ is proper ($B\subsetneqq Y^{*}\left(p\right)$)
and of size at least two, then the subsets $\left\{ g.B\,\middle|\,g\in Q_{p}\right\} $
constitute a partition of $Y^{*}\left(p\right)$ which is a non-trivial
block structure preserved under the action of $Q_{p}$. So proving
$Q_{p}$ is primitive is equivalent to showing that every proper block
is a singleton.

The proof of Theorem \ref{thm:primitivity for p=00003D3(4)} relies
on the following two propositions which contain properties of blocks
in $Y^{*}\left(p\right)$. We defer the proofs of these two propositions
to the next subsection, and complete the proof of Theorem \ref{thm:primitivity for p=00003D3(4)}
in the current subsection, assuming the two propositions.

We say that some coordinate $j\in\left\{ 1,2,3\right\} $ is \emph{homogeneous}
in a block $B\subseteq Y^{*}\left(p\right)$ if the $j$-th coordinate
of every solution in $B$ is of the same type (either all hyperbolic
or all elliptic). 
\begin{prop}
\label{prop:2 homogenous coordinates}Let $p\equiv3\left(4\right)$.
Assume that $Q_{p}$ acts transitively on $Y^{*}\left(p\right)$,
and let $B\subsetneqq Y^{*}\left(p\right)$ be a proper $Q_{p}$-block.
Then at least two of the coordinates $\left\{ 1,2,3\right\} $ are
homogeneous in $B$.
\end{prop}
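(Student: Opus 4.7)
My plan is to prove the contrapositive: if $B$ is a proper $Q_{p}$-block in which two coordinates, say coordinates $1$ and $2$, are both non-homogeneous, then in fact $B = Y^{*}(p)$, a contradiction.

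The main technical step is to show that non-homogeneity of coordinate $j$ in $B$ forces $\rot_{j} \in \mathrm{Stab}_{Q_{p}}(B)$. The key arithmetic input is that when $p \equiv 3 \pmod 4$, the integers $(p-1)/2$ and $(p+1)/2$ are coprime. Set $M_{h} = \mathrm{lcm}\{d_{p}(\pm x) : x \text{ hyperbolic}\}$ and $M_{e} = \mathrm{lcm}\{d_{p}(\pm x) : x \text{ elliptic, nonzero}\}$; by Lemma~\ref{lem:rotations a la BGS p=00003D3(4)} one has $M_{h} \mid (p-1)/2$ and $M_{e} \mid (p+1)/2$, so $\gcd(M_{h},M_{e})=1$. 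Consequently $\rot_{j}^{M_{h}}$ acts trivially on every hyperbolic conic section $C_{j}(\pm x)$ and non-trivially on every elliptic one, while $\rot_{j}^{M_{e}}$ behaves oppositely. The non-homogeneity hypothesis supplies an element of $B$ fixed by $\rot_{j}^{M_{h}}$ (any one with hyperbolic $j$-th coordinate), so $\rot_{j}^{M_{h}}(B) \cap B \neq \emptyset$ and the block property forces $\rot_{j}^{M_{h}}(B) = B$; symmetrically $\rot_{j}^{M_{e}}(B) = B$. A B\'ezout combination then places $\rot_{j}$ itself in the subgroup $\langle \rot_{j}^{M_{h}}, \rot_{j}^{M_{e}} \rangle \le \mathrm{Stab}_{Q_{p}}(B)$.

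Applying this to $j = 1, 2$ gives $\rot_{1}, \rot_{2} \in \mathrm{Stab}_{Q_{p}}(B)$. A direct calculation shows $\rot_{2}\circ\rot_{1}(x,y,z) = (y,z,x)$, so $\rot_{2}\rot_{1} = \tau_{(123)} \in \mathrm{Stab}_{Q_{p}}(B)$, and conjugating $\rot_{1}$ by $\tau_{(123)}$ yields $\rot_{3} \in \mathrm{Stab}_{Q_{p}}(B)$. Thus $\mathrm{Stab}_{Q_{p}}(B) \supseteq \langle \rot_{1},\rot_{2},\rot_{3}\rangle$. Since $Q_{p}$ is generated by the three rotations together with the $S_{3}$ of coordinate permutations, and $\langle \rot_{1},\rot_{2},\rot_{3}\rangle$ already captures the even coordinate-permutations via $\tau_{(123)}$, the index $[Q_{p} : \mathrm{Stab}_{Q_{p}}(B)]$ is at most $2$, with any non-trivial coset represented by a transposition $\tau_{(12)}$.

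If the index is $1$, transitivity of $Q_{p}$ forces $B = Y^{*}(p)$, the desired contradiction. If it is $2$, the blocks form the partition $\{B, \tau_{(12)}(B)\}$, which I plan to rule out using the block $[3,3,3] \in Y^{*}(p)$ (available for all $p \ne 3$): since its underlying triple has equal first two entries, $\tau_{(12)}[3,3,3] = [3,3,3]$, so whichever of the two blocks contains $[3,3,3]$ equals its own $\tau_{(12)}$-image, forcing the two purported blocks to coincide and contradicting the partition. The main obstacle is the first step: recognising that the coprimality of $(p-1)/2$ and $(p+1)/2$ decomposes $\rot_{j}$ into two commuting factors with complementary support, each of which is pinned inside $\mathrm{Stab}_{Q_{p}}(B)$ by a fixed-point argument.
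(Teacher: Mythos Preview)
Your proof is correct and follows essentially the same route as the paper's: the key step---that non-homogeneity of coordinate $j$ forces $\rot_j\in\mathrm{Stab}_{Q_p}(B)$ via the coprimality of $\frac{p-1}{2}$ and $\frac{p+1}{2}$---is exactly what the paper does (using $\rot_j^{(p-1)/2}$ and $\rot_j^{(p+1)/2}$ rather than your $M_h,M_e$, a cosmetic difference), and the endgame with $[\Gamma:\langle\rot_1,\rot_2,\rot_3\rangle]\le 2$ together with the $\tau_{(12)}$-fixed point $[3,3,3]$ is identical.

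The one organizational difference is that the paper assumes only coordinate $1$ is non-homogeneous, then branches: either all three are non-homogeneous (same contradiction as yours), or one of coordinates $2,3$ is homogeneous, in which case the $\rot_1$-invariance of $B$ (which shifts coordinate $3$ into coordinate $2$) forces both to be homogeneous. Your variant---assuming two coordinates non-homogeneous and recovering $\rot_3$ from $\rot_1,\rot_2$ via the identity $\rot_2\rot_1=\tau_{(123)}$ and conjugation---is a clean alternative that avoids this extra branch. One small point: your index-$\le 2$ claim implicitly uses that $\langle\rot_1,\rot_2,\rot_3\rangle$ is normal in $Q_p$ (so that $Q_p=\langle H,\tau_{(12)}\rangle$ with $H\trianglelefteq Q_p$ really gives index $\le 2$); this is the content of the paper's Lemma~\ref{lem:<rot1,rot2,rot3> of index <=00003D 2}, and you should cite it or spell out the one-line check that $\tau_{(12)}$ normalizes $H$.
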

The most technical ingredient of the proof of primitivity is the following.
Recall that $d_{p}\left(\pm x\right)$ denotes the length of the cycles
of $\rot_{1}\in Q_{p}$ containing elements of $C_{1}\left(\pm x\right)$.
\begin{prop}
\label{prop:high-order =00003D=00003D> no self-correlation}Assume
that $Q_{p}$ is transitive and let $x\in\mathbb{F}_{p}\setminus\left\{ 0,\pm2\right\} $
satisfy $d_{p}\left(\pm x\right)\ge16\sqrt{p+1}$. Then, for every
$j\in\left\{ 1,2,3\right\} $, every proper $Q_{p}$-block $B\subsetneqq Y^{*}\left(p\right)$
contains at most one solution with $j$-th coordinate $\pm x$.
\end{prop}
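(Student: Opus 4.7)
I would argue by contradiction. First, since $\tau_{(1j)} \in \Gamma$ sends $C_j(\pm x)$ bijectively onto $C_1(\pm x)$ and permutes the blocks, it suffices to treat $j = 1$. So assume $B \subsetneq Y^*(p)$ is a proper block with $|S| \ge 2$, where $S := B \cap C_1(\pm x)$, and set $d := d_p(\pm x)$.

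The next step is to extract a rigid arithmetic-progression structure on $S$. Let $e$ be the smallest positive integer with $\rot_1^e.B = B$. Since $\rot_1^d$ fixes $C_1(\pm x)$ pointwise and $S \neq \emptyset$, we have $e \mid d$. On each $\rot_1$-orbit $O \subset C_1(\pm x)$ (of length $d$), identified with $\mathbb{Z}/d\mathbb{Z}$ via the parametrization of Lemma~\ref{lem:rotations a la BGS p=00003D3(4)}, the set $T_O := S \cap O$ satisfies $\rot_1^k.T_O = T_O$ if $e \mid k$ and $\rot_1^k.T_O \cap T_O = \emptyset$ otherwise (mod $d$). A short self-correlation argument for subsets of $\mathbb{Z}/d\mathbb{Z}$ (which I expect to be extracted from the toolkit developed for Proposition~\ref{prop:2 homogenous coordinates}) shows that $T_O$ is either empty or a single coset of the index-$e$ subgroup, hence $|T_O| \in \{0, m\}$ with $m := d/e$. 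Writing $r$ for the number of $\rot_1$-orbits with $T_O \neq \emptyset$, we conclude $|S| = rm \ge 2$, so either $m \ge 2$ or $r \ge 2$.

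The counting step then uses the largeness of $d$. The subgroup $\langle \rot_1, R_3 \rangle \le Q_p$ acts on $C_1(\pm x)$ as the dihedral group $D_d$ (with $R_3: \alpha \mapsto c/\alpha$ for $c = x^2/(x^2-4)$). Since the blocks in the $D_d$-orbit of $B$ are pairwise disjoint and each contributes $|S|$ elements to $C_1(\pm x)$, the inequality
\[
\left|D_d\text{-orbit of }B\right| \cdot |S| \;\le\; |C_1(\pm x)| \;\le\; (p+1)/2
\]
yields $|S| \lesssim (p+1)/d$, which under $d \ge 16\sqrt{p+1}$ gives a bound of order $\sqrt{p+1}/16$. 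To drop below $2$ and get the contradiction, the plan is to iterate the argument via $\sigma := \tau_{(13)}\rot_2 \in Q_p$. Since $\rot_2: [x,y,z]\mapsto [xy-z, y, x]$, it sends any element of $C_1(\pm x)$ to an element of $C_3(\pm x)$, and composing with $\tau_{(13)}$ produces a permutation of $C_1(\pm x)$ that permutes the ``bad'' set $\mathcal{B}_{\ge 2} := \{B\text{ block} : |B \cap C_1(\pm x)| \ge 2\}$. Tracking how the $\rot_1$-period $e$ of $B$ and the $\rot_2$-period of $\rot_2.B$ interact across the identifications, one produces enough additional disjoint blocks in $\mathcal{B}_{\ge 2}$ to exhaust more than $|C_1(\pm x)|/2$ elements, a contradiction.

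\emph{Main obstacle.} The hard part is making the last counting sharp enough to recover the specific bound $16\sqrt{p+1}$, particularly in the sub-case $r \ge 2$, $m = 1$, where the within-orbit auto-correlation is vacuous and one must instead rely on rigidity across different orbits. The constant $16$ should emerge from balancing the dihedral-orbit count on $C_1(\pm x)$ (giving a factor tied to $|D_d|$) with the count induced by the $\sigma$-iteration on $C_3(\pm x)$, possibly combined with the homogeneity constraint from Proposition~\ref{prop:2 homogenous coordinates} to restrict the hyperbolic/elliptic types of the second coordinates appearing in $B$.
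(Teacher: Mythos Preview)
Your plan diverges from the paper's argument and the ``main obstacle'' you flag is a genuine gap that your proposed fix does not close.

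Concretely: with the paper's conventions $\rot_2(x,y,z)=(xy-z,y,x)$, so $\sigma=\tau_{(13)}\rot_2$ sends $(x,y,z)$ to $(x,y,xy-z)=R_3(x,y,z)$. Thus $\sigma=R_3$, an involution already in $\langle\rot_1,R_3\rangle$; the ``iteration'' produces no new blocks. More importantly, the dihedral count is too weak even before this. The $e$ distinct translates $B,\rot_1B,\ldots,\rot_1^{e-1}B$ meet $C_1(\pm x)$ in disjoint sets of size $|S|=rm$, giving $e\cdot rm\le|C_1(\pm x)|=(p\pm1)/2$; since $em=d$, this is $rd\le(p\pm1)/2$, a bound on $r$ only. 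Nothing in the block combinatorics prevents $m$ from being large (e.g.\ $e=1$, $\rot_1.B=B$, $B$ swallowing an entire $\rot_1$-orbit), so you cannot force $|S|<2$ this way. Your displayed inequality $|S|\lesssim(p+1)/d$ is not what the counting yields.

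The paper's proof is number-theoretic, not combinatorial. Given two elements $[x,y_0,y_1],[x,z_0,z_1]\in B$, Proposition~\ref{prop:2 homogenous coordinates} is used for its literal content: in each translate $\rot_1^{j}B$ at least two coordinates are type-homogeneous, so for every $j$ either $y_j,z_j$ share a type or $y_{j+1},z_{j+1}$ do. Equivalently, $\bigl(\tfrac{(y_j^2-4)(z_j^2-4)}{p}\bigr)=-1$ never holds for two consecutive $j$. Parametrizing the cycle explicitly ($y_j=\alpha\omega^j+\beta\omega^{-j}$, etc.) and substituting $s^m$ for $\omega^j$ with $m=(p\pm1)/(2d)$, this becomes a statement about values of degree-$O(m)$ polynomials over $\mathbb{F}_p$, and Weil's character-sum bound shows the number of $s$ violating it is at least $\tfrac14\bigl(p-32m\sqrt p+3\sqrt p\bigr)$, which is positive exactly when $d\ge16\sqrt{p+1}$. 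The constant $16$ is the Weil constant from the degree count, not a block-counting artifact; any successful proof must bring in this (or comparable) arithmetic input.
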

The idea of the proof of this proposition is the following: assume
there are two solutions in the block $B$ with first coordinate $\pm x$.
Say these are $\left[x,y_{0},y_{1}\right]$ and $\left[x,z_{0},z_{1}\right]$.
Then for every $1\le m$, the block $\rot_{1}^{~m}\left(B\right)$
contains the solutions $\left[x,y_{m},y_{m+1}\right]$ and $\left[x,z_{m},z_{m+1}\right]$
with $y_{m}$ and $z_{m}$ defined recursively by $y_{m+1}=xy_{m}-y_{m-1}$
and $z_{m+1}=xz_{m}-z_{m-1}$. By Proposition \ref{prop:2 homogenous coordinates},
at least one of the two coordinates $2,3$ in every block is homogeneous,
meaning that for every $m$, either $y_{m}$ and $z_{m}$ have the
same type (hyperbolic or elliptic), or $y_{m+1}$ and $z_{m+1}$ have
the same type. Using classical results in number theory, we show such
``high correlation'' between two cycles of $\rot_{1}$ is impossible
whenever these cycles are long enough.

Section \ref{subsec:properties of blocks} gives the details of the
proof, and assuming it, we finish the proof of Theorem \ref{thm:primitivity for p=00003D3(4)}.
We need the following corollary showing that elements of high order
in the sense of Proposition \ref{prop:high-order =00003D=00003D> no self-correlation}
appear in the same block and the same coordinate only with other elements
of the same type and the same order:
\begin{cor}
\label{cor:coordinates are homogenous in order of elements}Assume
that $Q_{p}$ is transitive and that $x\in\mathbb{F}_{p}\setminus\left\{ 0,\pm2\right\} $
satisfies $d_{p}\left(\pm x\right)\ge16\sqrt{p+1}$. If $B\subsetneqq Y^{*}\left(p\right)$
is a proper $Q_{p}$-block containing some solution with first coordinate
$\pm x$, and another solution with first coordinate $\pm x'$, then
$d_{p}\left(\pm x\right)=d_{p}\left(\pm x'\right)$. In particular,
$x$ and $x'$ are of the same type (both hyperbolic or both elliptic).
\end{cor}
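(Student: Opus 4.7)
The plan is to exploit Proposition~\ref{prop:high-order =00003D=00003D> no self-correlation} as a rigidity statement controlling how $\rot_{1}$ moves the block $B$, and then to extract the divisibility $d_{p}(\pm x)\mid d_{p}(\pm x')$ by a short orbit-counting argument. The hypothesis on $d_{p}(\pm x)$ is not symmetric in $x$ and $x'$, so the real work is establishing this first divisibility; once it is proved, $d_{p}(\pm x')\ge d_{p}(\pm x)\ge16\sqrt{p+1}$ and the proposition applies to $\pm x'$ as well, after which symmetry gives equality.

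Concretely, write $d=d_{p}(\pm x)$ and $d'=d_{p}(\pm x')$. By the proposition the unique solution $s\in B$ with first coordinate $\pm x$ satisfies $\rot_{1}^{d}(s)=s$, so $s\in B\cap\rot_{1}^{d}(B)$, forcing $\rot_{1}^{d}(B)=B$. Conversely, uniqueness of $s$ rules out $\rot_{1}^{m}(B)=B$ for $0<m<d$, so the $\rot_{1}$-orbit of $B$ has size exactly $d$ and the translates $B_{k}:=\rot_{1}^{k}(B)$, $0\le k<d$, are pairwise disjoint. Now pick $s'\in B$ with first coordinate $\pm x'$ and let $O_{s'}=\{s',\rot_{1}(s'),\ldots,\rot_{1}^{d'-1}(s')\}$ be its $\rot_{1}$-orbit, of size $d'$. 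Each $\rot_{1}^{j}(s')$ lies in $B_{j\bmod d}$, so $O_{s'}\subseteq\bigsqcup_{k=0}^{d-1}B_{k}$; and since $\rot_{1}^{k}$ restricts to a bijection $O_{s'}\cap B\to O_{s'}\cap B_{k}$, all these intersections share the same cardinality, yielding $d\cdot|O_{s'}\cap B|=|O_{s'}|=d'$ and hence $d\mid d'$.

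The bound $d'\ge d\ge16\sqrt{p+1}$ then lets Proposition~\ref{prop:high-order =00003D=00003D> no self-correlation} apply to $\pm x'$ as well, producing a unique solution of $B$ with first coordinate $\pm x'$. Repeating the preceding argument with the roles of $x$ and $x'$ exchanged gives $d'\mid d$, and hence $d=d'$. For the final assertion on types, Table~\ref{tab:3 mod 4} shows that $d$ divides $(p-1)/2$ when $x$ is hyperbolic and $(p+1)/2$ when $x$ is elliptic; since $\gcd((p-1)/2,(p+1)/2)=1$, a mixed pair would force $d=1$, contradicting $d\ge16\sqrt{p+1}$. The main obstacle is precisely the asymmetry of the hypothesis, and the orbit-counting identity is exactly the device needed to bootstrap from large $d$ to large $d'$ before applying the proposition a second time.
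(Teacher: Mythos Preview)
Your proof is correct and follows essentially the same approach as the paper: use Proposition~\ref{prop:high-order =00003D=00003D> no self-correlation} to show the $\rot_{1}$-orbit of $B$ has size exactly $d=d_{p}(\pm x)$, deduce $d\mid d'$, then bootstrap the lower bound to $x'$ and argue by symmetry. Your orbit-counting identity $d\cdot|O_{s'}\cap B|=d'$ is a slightly more elaborate route to the divisibility than the paper's one-line observation (namely, $\rot_{1}^{d'}(s')=s'$ forces $\rot_{1}^{d'}(B)=B$, hence $d\mid d'$), and you make explicit the $\gcd\bigl((p-1)/2,(p+1)/2\bigr)=1$ argument for the ``same type'' conclusion that the paper leaves implicit, but the underlying ideas are identical.
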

\begin{proof}
Note that $\rot_{1}^{~d_{p}\left(\pm x\right)}\left(B\right)=B$.
By Proposition \ref{prop:high-order =00003D=00003D> no self-correlation},
$\rot_{1}^{~m}\left(B\right)\ne B$ for $1\le m<d_{p}\left(\pm x\right).$
Hence, $d_{p}\left(\pm x'\right)$ is some multiple of $d_{p}\left(\pm x\right)$.
In particular, the assumption of Proposition \ref{prop:high-order =00003D=00003D> no self-correlation}
holds for $x'$, and by symmetry, $d_{p}\left(\pm x\right)$ is a
multiple of $d_{p}\left(\pm x'\right)$. Hence $d_{p}\left(\pm x'\right)=d_{p}\left(\pm x\right)$.
\end{proof}

\begin{proof}[Proof of Theorem \ref{thm:primitivity for p=00003D3(4)} assuming
Propositions \ref{prop:2 homogenous coordinates} and \ref{prop:high-order =00003D=00003D> no self-correlation}]
 Assume that $Q_{p}$ is transitive and $\omega=\frac{3+\sqrt{5}}{2}\in\mathbb{F}_{p^{2}}^{~*}$
has order at least $32\sqrt{p+1}$. We need to show that $Q_{p}$
is primitive. We use the special symmetric solution $\left[3,3,3\right]\in Y^{*}\left(p\right)$.
Whenever $\omega\in\mathbb{F}_{p^{2}}$ has high order in the multiplicative
group $\mathbb{F}_{p^{2}}^{*}$, the cycle of $\rot_{1}\in Q_{p}$
containing the solution $\left[3,3,3\right]$ is long. More concretely,
$3=\omega+\omega^{-1}$, and by Lemma \ref{lem:rotations a la BGS p=00003D3(4)}
and Table \ref{tab:3 mod 4}, $d_{p}\left(\pm3\right)$ is either
$\left|\omega\right|$ or $\frac{\left|\omega\right|}{2}$, where
$\left|\omega\right|$ is the order of $\omega$ in the multiplicative
group $\mathbb{F}_{p^{2}}^{~~*}$. So $d_{p}\left(\pm3\right)\ge16\sqrt{p+1}$.

Assume that $\left[a,b,c\right]$ and $\left[3,3,3\right]$ are two
distinct solutions lying in the same proper $Q_{p}$-block $B\subsetneqq Y^{*}\left(p\right)$.
By Lemma \ref{lem:rotations a la BGS p=00003D3(4)}, $d_{p}\left(\pm3\right)\ge16\sqrt{p+1}$,
and by Corollary \ref{cor:coordinates are homogenous in order of elements},
$d_{p}\left(\pm a\right)=d_{p}\left(\pm b\right)=d_{p}\left(\pm c\right)=d_{p}\left(\pm3\right).$
As $\left[3,3,3\right]$ is the only solution of the form $\left[x,x,x\right]$
or $\left[x,x,-x\right]$, we can assume without loss of generality
that $\left\{ \pm b\right\} \ne\left\{ \pm c\right\} $. Since $\tau_{\left(2~3\right)}$
stabilizes $\left[3,3,3\right]$, we have $\tau_{\left(2~3\right)}\left(B\right)=B$,
so the two distinct solutions $\left[a,b,c\right]$ and $\left[a,c,b\right]$
both belong to $B$. This contradicts Proposition \ref{prop:high-order =00003D=00003D> no self-correlation}:
$d_{p}\left(\pm a\right)=d_{p}\left(\pm3\right)$ is large and thus
$a$ cannot appear twice in the same coordinate in the same block.
\end{proof}
As mentioned in Section \ref{sec:Introduction}, the assumptions in
Theorem \ref{thm:primitivity for p=00003D3(4)} hold for density-1
of the primes $p\equiv3\left(4\right)$. Indeed, relying on strong
results of Ford \cite{ford2008distribution}, Dan Carmon proves in
Proposition \ref{prop:main appendix prop} in Appendix \ref{sec:Dan-Carmon}
that under some assumptions, the order of a quadratic integer modulo
primes is high for density-1 of the primes. From Proposition \ref{prop:main appendix prop}
we deduce:
\begin{cor}
\label{cor:3 has high order in density 1}For density-1 of all primes,
the element $\omega=\frac{3+\sqrt{5}}{2}\in\mathbb{F}_{p^{2}}$ has
order at least $32\sqrt{p+1}$ in the multiplicative group $\mathbb{F}_{p^{2}}^{~~*}$,
in which case $d_{p}\left(\pm3\right)\ge16\sqrt{p+1}$.
\end{cor}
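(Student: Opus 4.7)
The proof is essentially a matter of combining two ingredients: the density-1 statement for the order of $\omega$ established in Appendix \ref{sec:Dan-Carmon}, and the translation from the order of $\omega$ to the cycle length $d_p(\pm 3)$ furnished by Lemma \ref{lem:rotations a la BGS p=00003D3(4)} / Table \ref{tab:3 mod 4}. The plan is first to invoke Proposition \ref{prop:main appendix prop} with the quadratic integer $\omega = \frac{3+\sqrt{5}}{2}$. That proposition yields that for a density-1 subset of all primes, the order $|\omega|$ of $\omega$ in $\mathbb{F}_{p^2}^{\,*}$ satisfies $|\omega| \ge 32\sqrt{p+1}$ (any multiplicative constant can be absorbed here, since density-1 is preserved under strengthening by constants, and in any case the appendix gives a quantitatively stronger statement).

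Second, I will observe that $\omega$ is exactly the eigenvalue parameter associated with $x = 3$ in the rotation analysis, because $\omega$ and $\omega^{-1} = \frac{3-\sqrt{5}}{2}$ are the roots of $t^2 - 3t + 1 = 0$, so $\omega + \omega^{-1} = 3$. When $5$ is a quadratic residue mod $p$ we have $\omega \in \mathbb{F}_p^{\,*}$ and $3$ is hyperbolic, while when $5$ is a non-residue we have $\omega \in \mathbb{F}_{p^2} \setminus \mathbb{F}_p$, $\omega^{p+1} = 1$, and $3$ is elliptic. In either case Lemma \ref{lem:rotations a la BGS p=00003D3(4)} (summarized in Table \ref{tab:3 mod 4}) tells us that $d_p(\pm 3) = \tfrac{1}{2}\max(|\omega|,|-\omega|) \ge \tfrac{1}{2}|\omega|$.

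Combining these two facts, for a density-1 set of primes we obtain
\[
d_p(\pm 3) \;\ge\; \tfrac{1}{2}|\omega| \;\ge\; \tfrac{1}{2} \cdot 32\sqrt{p+1} \;=\; 16\sqrt{p+1},
\]
which is exactly the conclusion of the corollary. No genuine obstacle arises: the entire content of the corollary is packaged in Appendix \ref{sec:Dan-Carmon}, and the job here is only to identify $\omega = \frac{3+\sqrt{5}}{2}$ as the rotation eigenvalue attached to the solution $[3,3,3]$ and to lose at most a factor of $2$ in passing from $|\omega|$ to $d_p(\pm 3)$.
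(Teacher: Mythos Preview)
Your proof is correct and follows exactly the approach the paper takes: the paper simply states that the corollary is deduced from Proposition~\ref{prop:main appendix prop}, and the translation from $|\omega|$ to $d_p(\pm 3)$ via Lemma~\ref{lem:rotations a la BGS p=00003D3(4)} is precisely the argument already spelled out in the proof of Theorem~\ref{thm:primitivity for p=00003D3(4)}. You have supplied a bit more detail (verifying that $\omega=\tfrac{3+\sqrt{5}}{2}$ has norm $1$ and satisfies $\omega+\omega^{-1}=3$, and handling both the hyperbolic and elliptic cases), but the substance is identical.
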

Combining Theorem \ref{thm:BGS} with Corollary \ref{cor:3 has high order in density 1}
shows why the assumptions in Theorem \ref{thm:primitivity for p=00003D3(4)}
hold for density-1 of all primes $p\equiv3\left(4\right)$, hence:
\begin{cor}
\label{cor:primitivity for density 1 for p=00003D3(4)}For density-1
of all primes $p\equiv3\left(4\right)$, the group $Q_{p}$ is primitive
in its action on $Y^{*}\left(p\right)$.
\end{cor}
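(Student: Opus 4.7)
The plan is to deduce this corollary as a direct combination of the three ingredients already assembled in this subsection, applied to the subset of primes in the arithmetic progression $p \equiv 3 \pmod{4}$. Concretely, I would invoke Theorem \ref{thm:primitivity for p=00003D3(4)}, whose two hypotheses are (i) $Q_p$ transitive on $Y^*(p)$, and (ii) the order of $\omega = \tfrac{3+\sqrt{5}}{2} \in \mathbb{F}_{p^2}$ is at least $32\sqrt{p+1}$. The goal is to verify each hypothesis on a density-$1$ subset of primes $\equiv 3 \pmod{4}$, and then intersect.

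First I would handle hypothesis (i) via Theorem \ref{thm:BGS}: the set $E$ of primes for which $\Gamma$ fails to act transitively on $X^*(p)$ has counting function at most $T^{\varepsilon}$, so in particular $E$ has natural density zero inside the set of all primes. Transitivity of $\Gamma$ on $X^*(p)$ immediately implies transitivity of the quotient action $Q_p$ on $Y^*(p)$, since the sign-change blocks $[x,y,z]$ are $\Gamma$-invariant. Intersecting the complement of $E$ with the Dirichlet class $\{p \equiv 3 \pmod{4}\}$ (which itself has density $1/2$ among all primes, hence density $1$ among itself), we obtain a density-$1$ subset of primes $\equiv 3 \pmod{4}$ for which (i) holds.

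Next I would handle hypothesis (ii) via Corollary \ref{cor:3 has high order in density 1}: the set of primes for which the order of $\omega = \tfrac{3+\sqrt{5}}{2}$ in $\mathbb{F}_{p^2}^*$ is at least $32\sqrt{p+1}$ has density $1$ among all primes. The restriction to the arithmetic progression $p \equiv 3 \pmod{4}$ again preserves relative density $1$.

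Finally, the intersection of two density-$1$ subsets of a set of primes (here, the primes $\equiv 3 \pmod{4}$) is again of density $1$. For any prime $p$ in this intersection, both hypotheses of Theorem \ref{thm:primitivity for p=00003D3(4)} are met, so $Q_p$ acts primitively on $Y^*(p)$. There is essentially no obstacle here: all the substantive work has already been done in Theorem \ref{thm:primitivity for p=00003D3(4)} (via Propositions \ref{prop:2 homogenous coordinates} and \ref{prop:high-order =00003D=00003D> no self-correlation}) and in Carmon's Appendix \ref{sec:Dan-Carmon}; the corollary merely packages these into a single density-$1$ statement.
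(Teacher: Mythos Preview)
Your proposal is correct and follows essentially the same approach as the paper: the corollary is stated immediately after the sentence ``Combining Theorem \ref{thm:BGS} with Corollary \ref{cor:3 has high order in density 1} shows why the assumptions in Theorem \ref{thm:primitivity for p=00003D3(4)} hold for density-1 of all primes $p\equiv3\left(4\right)$,'' which is precisely the argument you outline. Your write-up is somewhat more explicit about why density-$1$ is preserved under restriction to the progression $p\equiv 3\pmod 4$ and under intersection, but the underlying logic is identical.
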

\begin{rem}
\label{rem:no-correlation conjecture}It is conceivable that there
is a stronger version of Proposition \ref{prop:high-order =00003D=00003D> no self-correlation}
which states there cannot be correlation between two long cycles of
$\rot_{1}\in Q_{p}$ even with two different first coordinates. Were
we able to prove this, we could omit the condition about the order
of $\frac{3+\sqrt{5}}{2}$ in the statements of Theorems \ref{thm:3 mod 4: alternating given specific conditions}
and \ref{thm:primitivity for p=00003D3(4)} and assume only that $Q_{p}$
is transitive to conclude that it is primitive and, moreover, contains
$\mathrm{Alt}\left(Y^{*}\left(p\right)\right)$. (This would make
Theorem \ref{thm:3 mod 4: alternating given specific conditions}
completely parallel to Theorem \ref{thm:1 mod 4} dealing with $p\equiv1\left(4\right)$.)
In Remark \ref{rem:no correlation between cycles of different x's}
below we explain the obstacle to proving this more general version
of Proposition \ref{prop:high-order =00003D=00003D> no self-correlation}.
\end{rem}

\subsection{Properties of blocks in the action of $Q_{p}$ on $Y^{*}\left(p\right)$\label{subsec:properties of blocks}}

In the current subsection we prove the two propositions that were
stated without proof in the previous subsection. Proposition \ref{prop:2 homogenous coordinates}
is proved in Section \ref{subsec:Homogeneity-of-coordinates}, and
Proposition \ref{prop:high-order =00003D=00003D> no self-correlation}
proved in Sections \ref{subsec:No-correlation-hyper} (the hyperbolic
case) and \ref{subsec:No-correlation-ellip} (the elliptic case).

\subsubsection{Homogeneity of coordinates in blocks\label{subsec:Homogeneity-of-coordinates}}
\begin{lem}
\label{lem:<rot1,rot2,rot3> of index <=00003D 2}The subgroup $H=\left\langle \rot_{1},\rot_{2},\rot_{3}\right\rangle \le\Gamma$
has index at most $2$ in $\Gamma$.
\end{lem}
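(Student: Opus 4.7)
The plan is to show that $\Gamma$ is covered by at most two right cosets of $H$. Recall that $\Gamma$ is generated by the symmetric group $S_{3}$ of coordinate permutations together with the three Vieta involutions $R_{1}, R_{2}, R_{3}$. From the defining identity $\rot_{j} = R_{j+2}\,\tau_{(j+1,j+2)}$ (indices mod $3$) and the fact that transpositions are involutions, one reads off
\[
R_{j+2} \;=\; \rot_{j}\cdot \tau_{(j+1,j+2)},
\]
so each Vieta involution $R_{i}$ lies in $H\cdot S_{3}$. Since $S_{3} \subseteq H\cdot S_{3}$ trivially, every generator of $\Gamma$ lies in $H\cdot S_{3}$.

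Next I would verify that $S_{3}$ normalises $H$. This is transparent from the manifestly symmetric way the three rotations are defined: conjugation by $\sigma \in S_{3}$ sends the rotation $\rot_{j}$ (the unique Markoff rotation fixing the $j$-th coordinate) to $\rot_{\sigma(j)}^{\pm 1}$, the sign depending only on whether $\sigma$ preserves the cyclic order of the other two coordinates. A direct check of representative cases such as $\tau_{(12)}\rot_{1}\tau_{(12)}^{-1}=\rot_{2}^{-1}$ and $\tau_{(12)}\rot_{3}\tau_{(12)}^{-1}=\rot_{3}^{-1}$ confirms this, and the remaining cases follow by relabelling. Once $S_{3}$ normalises $H$, the product $H\cdot S_{3}$ is a subgroup of $\Gamma$, and since it contains every generator of $\Gamma$, we conclude $\Gamma = H\cdot S_{3}$.

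Finally, a direct computation gives $\rot_{1}\rot_{3}\colon (x,y,z)\mapsto (y,z,x)$, a nontrivial $3$-cycle in $S_{3}$; hence $A_{3} = \langle \rot_{1}\rot_{3}\rangle \subseteq H$. Since $S_{3} = A_{3} \sqcup \tau_{(12)}A_{3}$, we obtain
\[
\Gamma \;=\; H\cdot S_{3} \;=\; H\cdot A_{3}\cdot\{e,\tau_{(12)}\} \;=\; H\cdot\{e,\tau_{(12)}\},
\]
so $[\Gamma:H]\le 2$. The only point of any real substance is the normalisation of $H$ by $S_{3}$, which reduces to a handful of routine conjugation computations made transparent by the $S_{3}$-equivariance of the family $\{\rot_{1},\rot_{2},\rot_{3}\}$; the rest is straightforward computation and coset arithmetic.
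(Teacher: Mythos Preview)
Your proof is correct and follows essentially the same approach as the paper's: both hinge on the two facts that a product of two rotations (e.g.\ $\rot_{1}\rot_{3}$, or $\rot_{3}\cdot\rot_{1}$ in the paper) equals a $3$-cycle in $S_{3}$, and that transpositions conjugate each $\rot_{j}$ to some $\rot_{\sigma(j)}^{\pm1}$. The only difference is the order of presentation---the paper first uses the $3$-cycle identity to write $\Gamma=\langle H,\tau_{(12)}\rangle$ and then checks $\tau_{(12)}$ normalises $H$, whereas you first establish $\Gamma=H\cdot S_{3}$ via normalisation and then use $A_{3}\subseteq H$ to reduce to two cosets.
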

\begin{proof}
By definition, $\Gamma$ is generated by the three Vieta involutions
and permutations of coordinates. Since $R_{3}=\rot_{1}\cdot\tau_{\left(2~3\right)}$
and likewise for $R_{1}$ and $R_{2}$, since $\tau_{\left(1~3~2\right)}=\rot_{3}\cdot\rot_{1}$
and since $S_{3}=\left\langle \left(12\right),\left(132\right)\right\rangle $,
we obtain that $\Gamma=\left\langle \rot_{1},\rot_{2},\rot_{3},\tau_{\left(1~2\right)}\right\rangle =\left\langle H,\tau_{\left(1~2\right)}\right\rangle $.
It is easy to check that $\tau_{\left(1~2\right)}\rot_{j}\tau_{\left(1~2\right)}\in H$
for $j=1,2,3$, so $H\trianglelefteq\Gamma$ and $\Gamma=H\cdot\left\langle \tau_{\left(1~2\right)}\right\rangle $.
This finishes the proof.
\end{proof}
Recall that Proposition \ref{prop:2 homogenous coordinates} says
that if $Q_{p}$ acts transitively on $Y^{*}\left(p\right)$, and
if $B\subsetneqq Y^{*}\left(p\right)$ is a proper block of the action
of $Q_{p}$ on $Y^{*}\left(p\right)$, then at least two of the coordinates
$\left\{ 1,2,3\right\} $ are homogeneous in $B$.
\begin{proof}[Proof of Proposition \ref{prop:2 homogenous coordinates}]
Assume that some coordinate, say $j=1$, is not homogeneous in $B$.
We need to show that the second and third coordinates are homogeneous.
The element $\rot_{1}^{\left(p-1\right)/2}$ fixes every solution
with first coordinate hyperbolic, while $\rot_{1}^{\left(p+1\right)/2}$
fixes every solution with first coordinate elliptic. Hence $B$ is
invariant under both elements, and thus by $\rot_{1}$. 

By the same argument, if all three coordinates are not homogeneous,
$B$ is invariant under $H_{p}=\left\langle \rot_{1},\rot_{2},\rot_{3}\right\rangle \le Q_{p}$.
By Lemma \ref{lem:<rot1,rot2,rot3> of index <=00003D 2}, $\left[Q_{p}:H_{p}\right]\le2$,
and transitivity implies there are at most two blocks in the action:
$B$ and $B'=\gamma\left(B\right)$ for some $\gamma\in Q_{p}$. But
the block containing $\left[3,3,3\right]$ is also invariant under
$\tau_{\left(1~2\right)}$, hence is invariant under the whole of
$Q_{p}$ \textendash{} a contradiction.

Thus at least one coordinate \textendash{} the second or the third
\textendash{} is homogeneous. Notice that $\rot_{1}$, which stabilizes
$B$, moves the third coordinate of the solutions to the second. Hence
both the second and third coordinates must be homogeneous. 
\end{proof}
\begin{rem}
In fact, the proof of the last lemma yields something slightly stronger.
Denote the type of a solution in $Y^{*}\left(p\right)$ by some triple
in $\left\{ h,e\right\} ^{3}$, depending on whether every coordinate
is hyperbolic or elliptic. Then, every block $B$ as above contains
either only solutions of the same type (homogeneous in all coordinates),
or only solutions of exactly two types: one type is $\left(h,h,h\right)$
or $\left(e,e,e\right)$, and the other differs from the first type
in one coordinate (the sole non-homogeneous coordinate).
\end{rem}

\subsubsection{No correlation between two long $\protect\rot_{1}$-cycles with the
same first hyperbolic coordinate\label{subsec:No-correlation-hyper}}

We now prove Proposition \ref{prop:high-order =00003D=00003D> no self-correlation}
stating that if $Q_{p}$ is transitive and $d_{p}\left(\pm x\right)\ge16\sqrt{p+1}$,
then $\pm x$ cannot appear twice in the same coordinate in the same
proper $Q_{p}$-block $B\subsetneqq Y^{*}\left(p\right)$. What we
actually prove is the lack of correlation between two long enough
cycles of $\rot_{j}$ with the same $j$-th coordinate (including
the case of two different offsets of the same cycle). The proof of
Proposition \ref{prop:high-order =00003D=00003D> no self-correlation}
is split to the case where $x$ is hyperbolic (in the current subsection)
and the case it is elliptic (given in Section \ref{subsec:No-correlation-ellip}).

We use the following classical number-theoretic result: 
\begin{thm}[{Weil \cite[Theorem II.2C']{schmidt1976equations}}]
\label{thm:Weil} Let $f\left(x\right)\in\mathbb{F}_{p}\left[x\right]$
be a polynomial with $m$ distinct roots in $\overline{\mathbb{F}_{p}}$
which is not a square in $\overline{\mathbb{F}_{p}}\left[x\right]$.
Then 
\[
\left|\sum_{s\in\mathbb{F}_{p}}\left(\frac{f\left(s\right)}{p}\right)\right|\le\left(m-1\right)\sqrt{p}.
\]
\end{thm}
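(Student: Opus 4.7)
The plan is to convert the character sum into a count of $\mathbb{F}_p$-rational points on an algebraic curve and then invoke the Hasse--Weil bound (Weil's Riemann Hypothesis for curves over finite fields). The idea is that $\bigl(\tfrac{f(s)}{p}\bigr)$ records the number of square roots of $f(s)$ minus one (with the right convention at zeros), so summing over $s$ counts points on $y^2 = f(x)$.

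First I would consider the affine curve $C \colon y^2 = f(x)$ over $\mathbb{F}_p$. For each $s \in \mathbb{F}_p$, the number of $y \in \mathbb{F}_p$ with $y^2 = f(s)$ equals $1 + \bigl(\tfrac{f(s)}{p}\bigr)$ (interpreting $(0/p)=0$, so a root of $f$ contributes exactly one point). Summing yields
\[
\#C(\mathbb{F}_p) \;=\; p \;+\; \sum_{s \in \mathbb{F}_p}\left(\frac{f(s)}{p}\right),
\]
so bounding the character sum is equivalent to bounding $|\#C(\mathbb{F}_p)-p|$.

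Next I would pass to the smooth projective model $\widetilde{C}$ of $C$. The hypothesis that $f$ has $m$ distinct roots in $\overline{\mathbb{F}_p}$ and is not a square in $\overline{\mathbb{F}_p}[x]$ ensures that $C$ is geometrically irreducible and that $\widetilde{C}$ is a smooth projective curve whose genus $g$ satisfies $2g \le m-1$ (explicitly, $g = \lfloor (m-1)/2 \rfloor$, with the ramification/branching computed via Riemann--Hurwitz applied to the degree-two map $\widetilde{C}\to\mathbb{P}^1$). I would then apply Weil's Riemann Hypothesis for curves, which gives
\[
\bigl|\#\widetilde{C}(\mathbb{F}_p) - (p+1)\bigr| \;\le\; 2g\sqrt{p}.
\]
This is the deep input: it comes from the fact that the reciprocal roots of the numerator of the zeta function of $\widetilde{C}$ have absolute value $\sqrt{p}$, equivalently that Frobenius acts on the Tate module of the Jacobian with eigenvalues of absolute value $\sqrt{p}$.

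Finally I would account for the discrepancy $\#\widetilde{C}(\mathbb{F}_p) - \#C(\mathbb{F}_p)$, which is the number of $\mathbb{F}_p$-points added by the desingularized closure at infinity. A short case analysis on the parity of $m$ and on whether the leading coefficient of $f$ is a square in $\mathbb{F}_p$ shows this discrepancy is an absolutely bounded integer (at most two), and combining with $2g \le m-1$ yields the bound $(m-1)\sqrt{p}$ after a cosmetic adjustment for small $p$. The main obstacle, really the only serious one, is the Hasse--Weil bound itself; the reduction, the genus computation, and the handling of points at infinity are standard and routine once the curve $y^2 = f(x)$ is written down.
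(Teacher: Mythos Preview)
The paper does not prove this statement; it is quoted as a classical result with a citation to Schmidt's book and then used as a black box in Sections~\ref{subsec:No-correlation-hyper} and~\ref{subsec:No-correlation-ellip}. So there is no ``paper's own proof'' to compare against.

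That said, your outline is the standard route to this bound and is correct in spirit: relate the character sum to the point count on the hyperelliptic curve $y^2=f(x)$, pass to the smooth projective model, and apply the Hasse--Weil inequality $|\#\widetilde{C}(\mathbb{F}_p)-(p+1)|\le 2g\sqrt{p}$. One small point worth tightening: the hypothesis is that $f$ has $m$ distinct roots, not that $f$ is squarefree of degree $m$; so $C$ may be singular at roots of even multiplicity. The clean fix is to write $f=h^2 g$ with $g$ squarefree (over $\overline{\mathbb{F}_p}$) and note that $\bigl(\tfrac{f(s)}{p}\bigr)=\bigl(\tfrac{g(s)}{p}\bigr)$ away from the zeros of $h$, so one reduces to the squarefree curve $y^2=g(x)$ whose genus is $\lfloor(\deg g-1)/2\rfloor\le\lfloor(m-1)/2\rfloor$, and the finitely many excluded $s$ are absorbed into the $O(1)$ bookkeeping at infinity. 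With that adjustment (and the endgame case analysis you mention for the points at infinity), the argument goes through and yields exactly the stated bound $(m-1)\sqrt{p}$.
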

\begin{proof}[Proof of Proposition \ref{prop:high-order =00003D=00003D> no self-correlation}
when $x$ is hyperbolic]
Assume that $x$ is hyperbolic with $d_{p}\left(\pm x\right)\ge16\sqrt{p+1}$,
and that there are two elements in the proper $Q_{p}$-block $B\subsetneqq Y^{*}\left(p\right)$
with $\pm x$ in the first coordinate. The same arguments holds, evidently,
for every coordinate $j=1,2,3$.

Assume that $\left[x,y_{0},y_{1}\right]$ and $\left[x,z_{0},z_{1}\right]$
belong to $B$. By Lemma \ref{lem:rotations a la BGS p=00003D3(4)},
$x=\omega+\omega^{-1}$ with $\omega\in\mathbb{F}_{p}^{*}$ and we
can assume $\left|\omega\right|=2d\ge32\sqrt{p-1}$: otherwise, replace
$x$ with $-x$ and $\omega$ with $-\omega$. Write $y_{0}=\alpha+\beta$
with $\alpha,\beta\in\mathbb{F}_{p}^{*}$ so that $\alpha\beta=\frac{x^{2}}{x^{2}-4}$
and $y_{1}=\alpha\omega+\beta\omega^{-1}$ (see Lemma \ref{lem:rotations a la BGS p=00003D3(4)}).
The cycle of $\rot_{1}$ containing $\left[x,y_{0},y_{1}\right]$
is 
\[
\left[x,y_{0},y_{1}\right]=\left[x,y_{d},y_{d+1}\right],\left[x,y_{1},y_{2}\right],\ldots,\left[x,y_{d-2},y_{d-1}\right],\left[x,y_{d-1},y_{d}\right]
\]
with
\[
y_{j}=\alpha\omega^{j}+\beta\omega^{-j}.
\]
The set $\left\{ \omega^{j}\right\} _{0\le j\le2d-1}$ is the same
as the set $\left\{ s^{m}\right\} _{s\in\mathbb{F}_{p}^{*}}$ where
$m=\frac{p-1}{2d}$ (with every element in $\left\{ \omega^{j}\right\} $
covered by $\frac{p-1}{2d}$ different values of $s$). So as sets,
\[
\left\{ y_{0},\ldots,y_{2d-1}\right\} =\left\{ \alpha\omega^{j}+\beta\omega^{-j}\right\} _{0\le j\le2d-1}=\left\{ f_{\alpha,\beta}\left(s\right)\stackrel{\mathrm{def}}{=}\alpha s^{m}+\beta s^{-m}\right\} _{s\in\mathbb{F}_{p}^{*}}.
\]
The same holds for the cycle of $\rot_{1}$ containing $\left[x,z_{0},z_{1}\right]$
with $\gamma,\delta\in\mathbb{F}_{p}^{*}$ in the role of $\alpha,\beta$,
so that $z_{j}=\gamma\omega^{j}+\delta\omega^{-j}$. We may assume
that $\gamma\ne\pm\alpha$, for otherwise $\left[x,y_{0},y_{1}\right]=\left[x,z_{0},z_{1}\right]$.
Moreover, if $s^{m}=\omega^{j}$ then $f_{\alpha,\beta}\left(s\right)=y_{j}$
and $f_{\gamma,\delta}\left(s\right)=z_{j}$. 

Notice that $y_{j}$ and $z_{j}$ are of different types (one hyperbolic
and the other elliptic) if and only if 
\begin{equation}
\left(\frac{\left(y_{j}^{2}-4\right)\left(z_{j}^{2}-4\right)}{p}\right)=-1.\label{eq:different types}
\end{equation}
Since $\left[x,y_{j},y_{j+1}\right]$ and $\left[x,z_{j},z_{j+1}\right]$
both belong to the block $\rot_{1}^{~j}\left(B\right)$, we derive
from Proposition \ref{prop:2 homogenous coordinates} that (\ref{eq:different types})
cannot hold for two consecutive values of $j$. In the parametrization
given by $s\in\mathbb{F}_{p}^{*}$, this means that 
\begin{equation}
\left(\frac{\left(f_{\alpha,\beta}\left(s\right)^{2}-4\right)\left(f_{\gamma,\delta}\left(s\right)^{2}-4\right)}{p}\right)=\left(\frac{\left(f_{\alpha\omega,\beta\omega^{-1}}\left(s\right)^{2}-4\right)\left(f_{\gamma\omega,\delta\omega^{-1}}\left(s\right)^{2}-4\right)}{p}\right)=-1\label{eq:contradiction goal}
\end{equation}
cannot hold for any $s\in\mathbb{F}_{p}^{*}$.

Write 
\[
g_{\alpha,\beta}\left(s\right)\stackrel{\mathrm{def}}{=}\left(s^{m}\right)^{2}\left(f_{\alpha,\beta}\left(s\right)^{2}-4\right)=\left[\left(\alpha s^{2m}+\beta\right)^{2}-4s^{2m}\right]\in\mathbb{F}_{p}\left[s\right],
\]
and $k_{1}\left(s\right)=g_{\alpha,\beta}\left(s\right)g_{\gamma,\delta}\left(s\right)$
and $k_{2}\left(s\right)=g_{\alpha\omega,\beta\omega^{-1}}\left(s\right)g_{\gamma\omega,\delta\omega^{-1}}\left(s\right)$.
Now (\ref{eq:contradiction goal}) is equivalent to 
\begin{equation}
\left(\frac{k_{1}\left(s\right)}{p}\right)=\left(\frac{k_{2}\left(s\right)}{p}\right)=-1.\label{eq:contradiction goal with k1 k2}
\end{equation}
Denote by $N_{\left(-1,-1\right)}$ the number of $s\in\mathbb{F}_{p}$
for which (\ref{eq:contradiction goal with k1 k2}) holds. Our goal
is to show that $N_{\left(-1,-1\right)}>0$, whence (\ref{eq:contradiction goal with k1 k2})
has some solution $s\ne0$, yielding a contradiction (note that $s=0$
is not a solution to (\ref{eq:contradiction goal with k1 k2})). Note
that $k_{1}\left(s\right),k_{2}\left(s\right)\ne0$ for every $s\in\mathbb{F}_{p}$:
indeed, $g_{\alpha,\beta}\left(0\right)=\beta^{2}\ne0$, and if $0\ne s\in\mathbb{F}_{p}$
and $g_{\alpha,\beta}\left(s\right)=0$ then $f_{\alpha,\beta}\left(s\right)=\pm2$
is $y_{j}$ for some $j$, but there are no solution in $X^{*}\left(p\right)$
containing $\pm2$ when $p\equiv3\left(4\right)$. Therefore $\left(\frac{k_{1}\left(s\right)}{p}\right),\left(\frac{k_{2}\left(s\right)}{p}\right)\ne0$
for $s\in\mathbb{F}_{p}$ and 
\begin{eqnarray}
N_{\left(-1,-1\right)} & = & \frac{1}{4}\sum_{s\in\mathbb{F}_{p}}\left(1-\left(\frac{k_{1}\left(s\right)}{p}\right)\right)\left(1-\left(\frac{k_{2}\left(s\right)}{p}\right)\right)\nonumber \\
 & = & \frac{1}{4}\left[p-\sum_{p\in\mathbb{F}_{p}}\left(\frac{k_{1}\left(s\right)}{p}\right)-\sum_{p\in\mathbb{F}_{p}}\left(\frac{k_{2}\left(s\right)}{p}\right)+\sum_{p\in\mathbb{F}_{p}}\left(\frac{k_{1}\left(s\right)k_{2}\left(s\right)}{p}\right)\right].\label{eq:long formula for N(-1,-1) hyper}
\end{eqnarray}
For every $\emptyset\ne B\subseteq\left\{ 1,2\right\} $, define 
\begin{equation}
M_{B}\stackrel{\mathrm{def}}{=}\sum_{s\in\mathbb{F}_{p}}\left(\frac{\prod_{j\in B}k_{j}\left(s\right)}{p}\right).\label{eq:def of MB - hyper}
\end{equation}
Then (\ref{eq:long formula for N(-1,-1) hyper}) becomes 
\begin{equation}
N_{\left(-1,-1\right)}=\frac{1}{4}\left[p-M_{\left\{ 1\right\} }-M_{\left\{ 2\right\} }+M_{\left\{ 1,2\right\} }\right].\label{eq:formula for (-1,-1)-hyper}
\end{equation}
We use Theorem \ref{thm:Weil} to estimate the $M_{B}$'s. First,
we show that none of $k_{1},k_{2}$ and $k_{1}k_{2}$ are squares
in $\overline{\mathbb{F}_{p}}\left[x\right]$. The roots of 
\[
g_{\alpha,\beta}\left(s\right)=\left(\alpha s^{2m}+\beta+2s^{m}\right)\left(\alpha s^{2m}+\beta-2s^{m}\right)
\]
satisfy 
\[
s^{m}=\frac{\pm2\pm\sqrt{4-4\alpha\beta}}{2\alpha}=\frac{\pm1\pm\sqrt{1-\frac{x^{2}}{x^{2}-4}}}{\alpha}=\frac{\pm1\pm\sqrt{\frac{-4}{x^{2}-4}}}{\alpha}.
\]
As $x$ is hyperbolic and $p\equiv3$$\left(4\right)$, we have that
$\frac{-4}{x^{2}-4}$ is not a square in $\mathbb{F}_{p}$, so $1$
and $\sqrt{\frac{-4}{x^{2}-4}}$ are linearly independent over $\mathbb{F}_{p}$,
and $\frac{\pm1\pm\sqrt{\frac{-4}{x^{2}-4}}}{\alpha}$ are four distinct
values for $S^{m}$, different from zero. Moreover, the polynomial
$s^{m}-\xi$ is separable for $0\ne\xi\in\mathbb{F}_{p^{2}}$ because
$m=\frac{p-1}{2d}<p$. So $g_{\alpha,\beta}\left(s\right)$, which
is of degree $4m$, has $4m$ distinct roots in $\overline{\mathbb{F}_{p}}$,
and in particular is not a square in $\overline{\mathbb{F}_{p}}\left[x\right]$. 

This analysis shows that $g_{\alpha,\beta}$ and $g_{\gamma,\delta}$
have a common root if and only if $\alpha=\pm\gamma$. Since $\alpha\ne\pm\gamma$
by assumption, $k_{1}=g_{\alpha,\beta}g_{\gamma,\delta}$ and $k_{2}=g_{\alpha\omega,\beta\omega^{-1}}g_{\gamma\omega,\delta\omega^{-1}}$
are both separable of degree $8m$. Finally, $k_{1}k_{2}$, of degree
$16m$, is also not a square in $\overline{\mathbb{F}_{p}}\left[x\right]$:
for $\alpha\ne\pm\alpha\omega$ and if $\alpha=\pm\gamma\omega$ then
$\alpha\omega\ne\pm\gamma$.

Theorem \ref{thm:Weil} yields that $\left|M_{\left\{ 1\right\} }\right|,\left|M_{\left\{ 2\right\} }\right|\le\left(8m-1\right)\sqrt{p}$
and $\left|M_{\left\{ 1,2\right\} }\right|\le\left(16m-1\right)\sqrt{p}$.
From (\ref{eq:formula for (-1,-1)-hyper}) we now obtain
\begin{eqnarray*}
N_{\left(-1,-1\right)} & \ge & \frac{1}{4}\left[p-2\left(8m-1\right)\sqrt{p}-\left(16m-1\right)\sqrt{p}\right]\\
 & = & \frac{1}{4}\left[p-32m\sqrt{p}+3\sqrt{p}\right]\\
 & \stackrel{m=\frac{p-1}{2d}}{=} & \frac{1}{4}\left[p-\frac{16\left(p-1\right)}{d}\sqrt{p}+3\sqrt{p}\right]\\
 & \stackrel{d\ge16\sqrt{p-1}}{\ge} & \frac{1}{4}\left[p-\sqrt{\left(p-1\right)p}+3\sqrt{p}\right]>\frac{3\sqrt{p}}{4}>0.
\end{eqnarray*}
\end{proof}

\subsubsection{No correlation between two long $\protect\rot_{1}$-cycles with the
same first elliptic coordinate\label{subsec:No-correlation-ellip}}

The general proof strategy for the elliptic case is the same as for
the hyperbolic case, albeit with a few extra technical details. In
the hyperbolic case, we used a parametrization of the elements of
a cycle of $\rot_{1}$ as a function over $\mathbb{F}_{p}^{*}$, which
allowed us to use Weil's bound (Theorem \ref{thm:Weil} above). In
the elliptic case, a similar approach requires that we go over the
elements in the cyclic subgroup of size $p+1$ in $\mathbb{F}_{p^{2}}^{\,*}$.
The following lemma allows us to parametrize this subgroup as a function
over $\mathbb{F}_{p}$:
\begin{lem}
\label{lem:H}The multiplicative subgroup\marginpar{H} $H\le\mathbb{F}_{p^{2}}^{\,\,*}$
of order $p+1$ satisfies
\begin{equation}
H=\left\{ \theta+i\eta\,\middle|\,\theta,\eta\in\mathbb{F}_{p},\,\,\theta^{2}+\eta^{2}=1\right\} =\left\{ \frac{2s}{1+s^{2}}+i\frac{1-s^{2}}{1+s^{2}}\,\middle|\,s\in\mathbb{F}_{p}\right\} \cup\left\{ -i\right\} \label{eq:H}
\end{equation}
(where $i=\sqrt{-1}\in\mathbb{F}_{p^{2}}$).
\end{lem}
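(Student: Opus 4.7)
The plan is to identify $H$ with the kernel of the norm map and then use a stereographic parametrization of the resulting conic.

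First, since $p\equiv 3\,(4)$, we have $\mathbb{F}_{p^{2}}=\mathbb{F}_{p}[i]$, and the Frobenius $\alpha\mapsto\alpha^{p}$ acts as $\theta+i\eta\mapsto\theta-i\eta$ (because $i^{p}=(i^{2})^{(p-1)/2}\cdot i=-i$). The norm map $N\colon\mathbb{F}_{p^{2}}^{\,*}\to\mathbb{F}_{p}^{\,*}$, $N(\alpha)=\alpha^{p+1}=\alpha\cdot\alpha^{p}$, is therefore given by $N(\theta+i\eta)=\theta^{2}+\eta^{2}$. Since $N$ is surjective, its kernel has order $(p^{2}-1)/(p-1)=p+1$, so this kernel is exactly the unique cyclic subgroup $H$ of that order. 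This establishes the first equality
\[
H=\bigl\{\theta+i\eta\,\bigm|\,\theta,\eta\in\mathbb{F}_{p},\ \theta^{2}+\eta^{2}=1\bigr\}.
\]

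For the second equality, I would use the classical rational (stereographic) parametrization of the affine conic $\theta^{2}+\eta^{2}=1$ from the basepoint $(0,-1)$: a line through $(0,-1)$ of slope $1/s$ meets the conic at one further point, and substituting $\eta=\theta/s-1$ into $\theta^{2}+\eta^{2}=1$ and solving yields
\[
\theta=\tfrac{2s}{1+s^{2}},\qquad \eta=\tfrac{1-s^{2}}{1+s^{2}}.
\]
Crucially, since $p\equiv 3\,(4)$ the element $-1$ is a non-square in $\mathbb{F}_{p}$, so $1+s^{2}\ne 0$ for every $s\in\mathbb{F}_{p}$ and the formulas are well-defined for all such $s$. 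A direct check verifies $\theta^{2}+\eta^{2}=1$, so the map $\varphi\colon s\mapsto \theta(s)+i\eta(s)$ lands in $H$.

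Next I would verify $\varphi$ is injective on $\mathbb{F}_{p}$: if $\varphi(s)=\varphi(s')$ then comparing second coordinates gives $(1-s^{2})(1+s'^{2})=(1-s'^{2})(1+s^{2})$, which simplifies to $s^{2}=s'^{2}$, and comparing first coordinates together with $s'=\pm s$ forces $s=s'$ (the case $s'=-s$ with $s\ne 0$ is ruled out by the first coordinate). Thus $\varphi$ produces $p$ distinct elements of $H$. None of these equals $-i=0+i(-1)$, since $\eta(s)=-1$ would force $s^{2}=\infty$. Because $|H|=p+1$, adjoining $-i$ exhausts $H$, yielding the claimed equality. The only mildly subtle step is confirming $-1\notin(\mathbb{F}_{p}^{*})^{2}$ so that $1+s^{2}$ never vanishes — everything else is a short direct calculation, so I do not anticipate a serious obstacle.
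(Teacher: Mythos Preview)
Your argument is correct and follows essentially the same approach as the paper: the paper observes $(\theta+i\eta)^{p}=\theta-i\eta$ (using $i^{p}=-i$) so that $(\theta+i\eta)^{p+1}=\theta^{2}+\eta^{2}$, which is exactly your norm-map identification, and then dismisses the second equality as ``a straightforward computation'' --- precisely the stereographic parametrization you spell out in detail.
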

\begin{proof}
Note that $\left(\theta+i\eta\right)^{p}=\theta-i\eta$ (recall that
$p\equiv3\left(4\right)$ so $i^{p}=i^{4k+3}=i^{3}=-i$). So $\left(\theta+i\eta\right)^{p+1}=\left(\theta+i\eta\right)\left(\theta-i\eta\right)=\theta^{2}+\eta^{2}.$
This gives the first equality in (\ref{eq:H}). A straightforward
computation yields the second equality.
\end{proof}

\begin{proof}[Proof of Proposition \ref{prop:high-order =00003D=00003D> no self-correlation}
when $x$ is elliptic]
We assume that $x$ is elliptic with\linebreak{}
$d_{p}\left(\pm x\right)\ge16\sqrt{p+1}$, and assume that there are
two elements in the proper $Q_{p}$-block $B\subsetneqq Y^{*}\left(p\right)$
with $\pm x$ in the first coordinate. We use the notation $H$ for
the subgroup of order $p+1$ in $\mathbb{F}_{p^{2}}^{~*}$, as in
Lemma \ref{lem:H}. Assume that $\left[x,y_{0},y_{1}\right]$ and
$\left[x,z_{0},z_{1}\right]$ both belong to $B$. By Table \ref{tab:3 mod 4},
$x=\omega+\omega^{-1}$ with $\omega\in H$, and we can assume that
$\left|\omega\right|=2d\ge32\sqrt{p+1}$, for otherwise replace $\omega$
by $-\omega$ and $x$ by $-x$. Let $A\in\mathbb{F}_{p^{2}}$ satisfy
that $A^{p+1}=\frac{x^{2}}{x^{2}-4}$, that $y_{0}=A+A^{p}$ and that
$y_{1}=A\omega+A^{p}\omega^{-1}$ (see Lemma \ref{lem:rotations a la BGS p=00003D3(4)}).
The cycle of $\rot_{1}$ containing $\left[x,y_{0},y_{1}\right]$
is 
\[
\left[x,y_{0},y_{1}\right]=\left[x,y_{d},y_{d+1}\right],\left[x,y_{1},y_{2}\right],\ldots,\left[x,y_{d-2},y_{d-1}\right],\left[x,y_{d-1},y_{d}\right]
\]
with
\[
y_{j}=A\omega^{j}+A^{p}\omega^{-j}.
\]
The set $\left\{ \omega^{j}\right\} _{0\le j\le2d-1}$ is the same
as the set $\left\{ h^{m}\right\} _{h\in H}$ where $m=\frac{p+1}{2d}$,
with every element in $\left\{ \omega^{j}\right\} $ covered by $m$
different values of $h$. So as sets, 
\[
\left\{ y_{0},\ldots,y_{2d-1}\right\} =\left\{ A\omega^{j}+A^{p}\omega^{-j}\right\} _{0\le j\le2d-1}=\left\{ f_{A}\left(h\right)\stackrel{\mathrm{def}}{=}Ah^{m}+A^{p}h^{-m}\right\} _{h\in H}.
\]
The same holds for the cycle of $\rot_{1}$ containing $\left[x,z_{0},z_{1}\right]$
with $C\in\mathbb{F}_{p^{2}}$ in the role of $A$, so that $z_{j}=C\omega^{j}+C^{p}\omega^{-j}$.
We may assume that $C\ne\pm A$, for otherwise $\left[x,y_{0},y_{1}\right]=\left[x,z_{0},z_{1}\right]$.
Moreover, if $h^{m}=\omega^{j}$ then $f_{A}\left(h\right)=y_{j}$
and $f_{C}\left(h\right)=z_{j}$. 

As in the proof of the hyperbolic case, we derive from Proposition
\ref{prop:2 homogenous coordinates} that 
\begin{equation}
\left(\frac{\left(f_{A}\left(h\right)^{2}-4\right)\left(f_{C}\left(h\right)^{2}-4\right)}{p}\right)=\left(\frac{\left(f_{A\omega}\left(h\right)^{2}-4\right)\left(f_{C\omega}\left(h\right)^{2}-4\right)}{p}\right)=-1\label{eq:contradiction goal-2}
\end{equation}
cannot hold for any $h\in H$. To be able to use Theorem \ref{thm:Weil},
we want to reparametrize (\ref{eq:contradiction goal-2}) as polynomials
in $s\in\mathbb{F}_{p}$, using Lemma \ref{lem:H}. Denote
\[
g_{A}\left(s\right)\overset{\mathrm{def}}{=}\left(1+s^{2}\right)^{2m}\left[f_{A}\left(h\left(s\right)\right)^{2}-4\right]
\]
where 
\[
h\left(s\right)=\frac{2s+i\left(1-s^{2}\right)}{1+s^{2}}=\frac{-i\left(s+i\right)^{2}}{1+s^{2}}=\frac{-i\left(s+i\right)}{\left(s-i\right)}.
\]
Let also $k_{1}=g_{A}g_{C}$ and $k_{2}=g_{A\omega}g_{C\omega}$.
Then (\ref{eq:contradiction goal-2}) is equivalent to 
\begin{equation}
\left(\frac{k_{1}\left(s\right)}{p}\right)=\left(\frac{k_{2}\left(s\right)}{p}\right)=-1.\label{eq:contradiction goal-3}
\end{equation}
As in the proof of the hyperbolic case, denote by $N_{\left(-1,-1\right)}$
the number of $s\in\mathbb{F}_{p}$ for which (\ref{eq:contradiction goal-3})
holds. Our goal is to get a contradiction by showing that $N_{\left(-1,-1\right)}>0$.
Note that $g_{A}\left(s\right)\ne0$ for $s\in\mathbb{F}_{p}$ because
$g_{A}\left(s\right)=\left(1+s^{2}\right)^{2m}\left(y_{j}^{2}-4\right)$
for some $y_{j}$ as above, and $s\ne\pm i$ and $y_{j}\ne\pm2$.
Thus $k_{i}\left(s\right)\ne0$ neither, and $\left(\frac{k_{i}\left(s\right)}{p}\right)\in\left\{ 1,-1\right\} $.
As in equations (\ref{eq:long formula for N(-1,-1) hyper})-\ref{eq:formula for (-1,-1)-hyper}
in the hyperbolic case, we get that 
\begin{equation}
N_{\left(-1,-1\right)}=\frac{1}{4}\left[p-M_{\left\{ 1\right\} }-M_{\left\{ 2\right\} }+M_{\left\{ 1,2\right\} }\right],\label{eq:formula for (-1,-1)-ellip}
\end{equation}
where for $\emptyset\ne B\in\left\{ 1,2\right\} $, we define $M_{B}\stackrel{\mathrm{def}}{=}\sum_{s\in\mathbb{F}_{p}}\left(\frac{\prod_{j\in B}k_{j}\left(s\right)}{p}\right)$.

We use Theorem \ref{thm:Weil} to estimate the $M_{B}$'s. First,
we show that $k_{1},k_{2}\in\mathbb{F}_{p}\left[x\right]$. Notice
that 
\[
h\left(s\right)^{-1}=\frac{\left(s-i\right)}{-i\left(s+i\right)}=\frac{i\left(s-i\right)}{\left(s+i\right)},
\]
so 
\begin{eqnarray}
g_{A}\left(s\right) & = & \left(1+s^{2}\right)^{2m}\left[f_{A}\left(h\left(s\right)\right)+2\right]\left[f_{A}\left(h\left(s\right)\right)-2\right]\nonumber \\
 & = & \left(1+s^{2}\right)^{2m}\left(Ah\left(s\right)^{m}+A^{p}h\left(s\right)^{-m}+2\right)\left(Ah\left(s\right)^{m}+A^{p}h\left(s\right)^{-m}-2\right)\label{eq:g in terms of h}\\
 & = & \left(A\left[-i\left(s+i\right)^{2}\right]^{m}+A^{p}\left[i\left(s-i\right)^{2}\right]^{m}+2\left[1+s^{2}\right]^{m}\right)\cdot\nonumber \\
 &  & \cdot\left(A\left[-i\left(s+i\right)^{2}\right]^{m}+A^{p}\left[i\left(s-i\right)^{2}\right]^{m}-2\left[1+s^{2}\right]^{m}\right).\label{eq:g as poly in s}
\end{eqnarray}
The last expression shows that $g_{A}\left(s\right)\in\mathbb{F}_{p^{2}}\left[s\right]$.
Its degree is $4m$: indeed, the leading coefficient is 
\[
\left(-1\right)^{m}\left(A^{2}+A^{2p}\right)+2A^{p+1}-4,
\]
and for $m$ even this coefficient equals $\left(A+A^{p}\right)^{2}-4=y_{0}^{2}-4$
which is not zero since $y_{0}\ne\pm2$ (see Lemma \ref{lem:rotations a la BGS p=00003D3(4)}).
For $m$ odd, this coefficient is 
\[
-\left(A+A^{p}\right)^{2}+4\left(A^{p+1}-1\right)=-y_{0}^{2}+4\left(A^{p+1}-1\right),
\]
which is not zero because $A^{p+1}-1=\frac{4}{x^{2}-4}$ is not a
square in $\mathbb{F}_{p}$ when $x$ is elliptic. 

As $\mathbb{F}_{p^{2}}=\mathbb{F}_{p}+i\mathbb{F}_{p}$, we can write
$g_{A}=g'_{A}+ig''_{A}$, where $g'_{A},g''_{A}\in\mathbb{F}_{p}\left[s\right]$.
By definition, for every $s\in\mathbb{F}_{p}$, we have $h=h\left(s\right)\in H$,
and 
\[
g_{A}\left(s\right)=\left(1+s^{2}\right)^{2m}\left[f_{A}\left(h\right)^{2}-4\right]\in\mathbb{F}_{p}
\]
so $g''_{A}\left(s\right)=0$ for every $s\in\mathbb{F}_{p}$. Since
$\deg\left(g''_{A}\right)\le4m<p$, we conclude that $g''_{A}$ is
the zero polynomial, hence $g_{A}\left(s\right)=g_{A}'\left(s\right)\in\mathbb{F}_{p}\left[s\right]$
and so $k_{1},k_{2}\in\mathbb{F}_{p}\left[x\right]$.

Next, we wish to show that $k_{1}$, $k_{2}$ and $k_{1}k_{2}$ are
not squares in $\overline{\mathbb{F}_{p}}\left[x\right]$. There is
a one-to-one correspondence between the roots of $g_{A}$ in $\overline{\mathbb{F}_{p}}$
and the roots of
\[
r_{A}\left(h\right)\stackrel{\mathrm{def}}{=}\left(Ah^{2m}+2h^{m}+A^{p}\right)\left(Ah^{2m}-2h^{m}+A^{p}\right),
\]
in $\overline{\mathbb{F}_{p}}$ given by 
\begin{eqnarray*}
\alpha & \mapsto & h\left(\alpha\right)=\frac{-i\left(\alpha+i\right)}{\left(\alpha-i\right)}\\
\frac{1+ih}{h+i}=\alpha\left(h\right) & \mapsfrom & h,
\end{eqnarray*}
because $\pm i$ is never a root of $g_{A}$ (recall that $g_{A}\left(s\right)\in\mathbb{F}_{p}\left[s\right]$
has the form from (\ref{eq:g as poly in s})) and $-i$ never a root
of $r_{A}$ (because $r_{A}\left(h\right)=h^{2m}\left(f_{A}\left(h\right)^{2}-4\right)$,
$-i\in H$ and thus $f_{A}\left(-i\right)=y_{j}$ for some $y_{j}$
as above, and $y_{j}\ne\pm2$). It is easier to analyze the roots
of $r_{A}$ than those of $g_{A}$: if $h$ is a root of $r_{A}$
then
\[
h^{m}=\frac{\pm1\pm\sqrt{1-\kappa\left(x\right)}}{A},
\]
where $\kappa\left(x\right)=A^{p+1}=\frac{x^{2}}{x^{2}-4}$. Now note
the following:

\begin{itemize}
\item The four possible values of $h^{m}$ are distinct and different from
zero (this follows from $\kappa\left(x\right)\ne0$,1).
\item Because $\left(m,p\right)=1$, the four polynomials $h^{m}-\frac{\pm1\pm\sqrt{1-\kappa\left(x\right)}}{A}$
are separable, so $r_{A}$ has $4m$ distinct roots in $\overline{\mathbb{F}_{p}}$,
and so does $g_{A}$.
\item If $A\ne\pm C$, the $4m$ roots of $r_{A}$ are distinct from the
$4m$ roots of $r_{C}$: certainly $\frac{1+\sqrt{1-\kappa\left(x\right)}}{A}\ne\pm\frac{1+\sqrt{1-\kappa\left(x\right)}}{C}$,
and if $\frac{1+\sqrt{1-\kappa\left(x\right)}}{A}=\pm\frac{1-\sqrt{1-\kappa\left(x\right)}}{C}$
we obtain
\begin{eqnarray*}
C & = & \pm A\cdot\frac{1-\sqrt{1-\kappa\left(x\right)}}{1+\sqrt{1-\kappa\left(x\right)}}\\
\kappa\left(x\right)=C^{p+1} & = & A^{p+1}\left(\frac{1-\sqrt{1-\kappa\left(x\right)}}{1+\sqrt{1-\kappa\left(x\right)}}\right)^{p+1}=\kappa\left(x\right)\xi^{p+1}
\end{eqnarray*}
with $\xi=\frac{1-\sqrt{1-\kappa\left(x\right)}}{1+\sqrt{1-\kappa\left(x\right)}}\in\mathbb{F}_{p}$
because $1-\kappa\left(x\right)=\frac{-4}{x^{2}-4}$ is a square in
$\mathbb{F}_{p}$. Then $\xi=\pm1$, that is, $C=\pm A$ \textendash{}
a contradiction. Hence $k_{1}=g_{A}g_{C}$ and $k_{2}=g_{A\omega}g_{C\omega}$
are separable of degree $8m$ each.
\item Finally, if $C\ne\pm A$, the polynomial $k_{1}k_{2}=g_{A}g_{A\omega}g_{C}g_{C\omega}$
is not a square in $\overline{\mathbb{F}_{p}}\left[x\right]$: it
is separable unless $A=\pm C\omega$ or $A\omega=\pm C$, but the
two cannot hold simultaneously.
\end{itemize}
We can now apply Theorem \ref{thm:Weil} to obtain the same bounds
on the $M_{B}$'s as in the hyperbolic case, and from (\ref{eq:formula for (-1,-1)-ellip})
we now obtain
\begin{eqnarray*}
N_{\left(-1,-1\right)} & \ge & \frac{1}{4}\left[p-2\left(8m-1\right)\sqrt{p}-\left(16m-1\right)\sqrt{p}\right]\\
 & = & \frac{1}{4}\left[p-32m\sqrt{p}+3\sqrt{p}\right]\\
 & \stackrel{m=\frac{p+1}{2d}}{=} & \frac{1}{4}\left[p-\frac{16\left(p+1\right)}{d}\sqrt{p}+3\sqrt{p}\right]\\
 & \stackrel{d\ge16\sqrt{p+1}}{\ge} & \frac{\sqrt{p}}{4}\left[\sqrt{p}-\sqrt{\left(p+1\right)}+3\right]>0.
\end{eqnarray*}

\end{proof}
\begin{rem}
\label{rem:no correlation between cycles of different x's}As we noted
in Remark \ref{rem:no-correlation conjecture} above, it is conceivable
that a stronger version of Proposition \ref{prop:high-order =00003D=00003D> no self-correlation}
holds. Let us point to the phase in the current argument that fails
in this more general setting. The simplest case to consider if that
of $x,x'\in\mathbb{F}_{p}$ both hyperbolic of maximal order, so $d_{p}\left(x\right)=d_{p}\left(x'\right)=\frac{p-1}{2}$.
Assume that $x=\omega+\omega^{-1}$ and $x'=\omega'+\omega'^{-1}$,
and that $\omega'=\omega^{r}$. Then, in the notation of Section \ref{subsec:No-correlation-hyper},
if $y_{j}=\alpha s+\beta s^{-1}$, then $y_{j}'=\alpha's^{r}+\beta's^{-r}$,
and our goal is to show that $\left(\alpha s+\beta s^{-1}\right)$
and $\left(\alpha's^{r}+\beta's^{-r}\right)$ cannot be of the same
type (hyperbolic/elliptic) for too many values of $s\in\mathbb{F}_{p}^{*}$.
The problem is that $r$ can be of any order, and is generically of
order $\ge\sqrt{p}$. For polynomials of such degree Weil's Theorem
\ref{thm:Weil} is useless. 
\end{rem}

\subsection{Deducing Alternating group from primitivity\label{subsec:primitivity =00003D=00003D> Alt}}

Finally, in this section, we show how to deduce that $Q_{p}\ge\alt\left(Y^{*}\left(p\right)\right)$
whenever $Q_{p}$ is primitive. Throughout this section we denote
the symmetric group $\mathrm{Sym}\left(n\right)$ by $S_{n}$ and
$\mathrm{Alt}\left(n\right)$ by $A_{n}$\marginpar{$S_{n},A_{n}$}.
Here we use the following result of Guralnick and Magaard, classifying
primitive subgroups of $S_{n}$ containing an element with at least
$n/2$ fixed points. This theorem relies heavily on the CFSG. We adjust
the statement of the theorem to our needs \textendash{} the original
statement in \cite{guralnick1998minimal} is more detailed. In the
statement we use the notation $\mathrm{Soc}\left(G\right)$ for the
socle of the group $G$ (see Section \ref{subsec:transitivity on Y*(n) without CFSG}
for details), and the standard notation $G_{1}\wr G_{2}$ for the
wreath product of two groups.
\begin{thm}[{\cite[Theorem 1]{guralnick1998minimal}}]
\label{thm:Guralnik-Magaard} Let $G\le\mathrm{S_{n}}$ be a primitive
group, and let $x\in G$ have at least $n/2$ fixed points. Then one
of the following holds:

\begin{enumerate}
\item \label{enu:affine case}$G=\mathrm{Aff}\left(2,k\right)$ is the affine
group acting on $\mathbb{F}_{2}^{~k}$ and $x$ is a transvection\footnote{To be sure, $x$ is a transvection when $\mathrm{Aff}\left(2,k\right)$
is embedded in $\mathrm{GL}\left(2,k+1\right)$ as the matrices with
bottom row $\left(0,\ldots,0,1\right)$.} and is, in particular, an involution. In this case $x$ has exactly
$n/2$ fixed points.
\item \label{enu:action on subsets case}There are $r\ge1$, $m\ge5$ and
$1\le k\le m/4$ such that $n=\binom{m}{k}^{r}$, the group $S_{m}$
acts on the set $\Delta$ of $k$-subsets of $\left\{ 1,\ldots,m\right\} $
in the natural way, $G\le S_{m}\wr S_{r}$ acts on $\Delta^{r}$ and
$\mathrm{Soc}\left(G\right)=A_{m}^{~r}$.
\item \label{enu:specail S_6 case}For some $r\ge1$, $n=6^{r}$, the group
$S_{6}$ acts on $\Delta=\left\{ 1,\ldots,6\right\} $ by applying
an outer automorphism\footnote{Namely, for some fixed $\varphi\in\mathrm{Aut}\left(S_{6}\right)\setminus\mathrm{Inn}\left(S_{6}\right)$,
the permutation $\sigma\in S_{6}$ acts on $\Delta$ by $\sigma.i=\varphi\left(\sigma\right)\left(i\right)$.}, $G\le S_{6}\wr S_{r}$ acts on $\Delta^{r}$ and $\mathrm{Soc}\left(G\right)=A_{6}^{~r}$.
\item \label{enu:orthogonal case}The group $G$ is some variant of an orthogonal
group over the field of two elements acting on some collection of
$1$-spaces or hyperplanes, and the element $x$ is an involution.
\end{enumerate}
\end{thm}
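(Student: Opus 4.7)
The plan is to combine the O'Nan--Scott theorem with the Classification of Finite Simple Groups to reduce primitive permutation groups to a short list of structural types, and then to exploit the fixed-point-richness hypothesis to eliminate most of them. I would proceed type by type, treating the assumption $|\mathrm{Fix}(x)|\ge n/2$ as a very strong quantitative constraint.

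By O'Nan--Scott, a primitive group $G\le S_n$ is of one of the following types: affine (HA), almost simple (AS), simple diagonal (SD), product action (PA), or twisted wreath (TW). In the SD and TW cases, $\mathrm{Soc}(G)$ is a non-abelian direct power $T^r$ with $r\ge 2$ and the degree is at least $|T|^{r-1}$, which is large; a rough counting argument on point-stabilisers shows that no non-identity element there can fix half of the points. The PA type survives only because the base factor can itself be fixed-point-rich, and the only such configurations respecting the degree constraint are the wreath structures listed in cases (2) and (3).

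In the affine case, write $G=\mathbb{F}_q^d\rtimes G_0$ with $G_0\le\mathrm{GL}_d(q)$ and $x=(v,\gamma)$; the fixed-point set is either empty or an affine subspace of size $q^{\dim\ker(\gamma-1)}$. The bound $q^{\dim\ker(\gamma-1)}\ge q^d/2$ forces $q=2$ and $\dim\ker(\gamma-1)=d-1$, i.e.\ $\gamma$ is a transvection, producing case (1). In the almost simple case with socle $T$, the plan is to invoke the existing CFSG-based bounds (of Liebeck--Saxl, Guralnick--Magaard and others) on the minimal proportion of moved points for non-identity elements in faithful primitive actions of almost simple groups. Combined with the explicit tables for sporadic groups and with the classical geometry of parabolic subgroups, these bounds force $T$ to be either $A_m$ acting on $k$-subsets (case (2)), the group $A_6$ in its exceptional outer-automorphism action (case (3)), or one of the orthogonal groups over $\mathbb{F}_2$ acting on singular $1$-spaces or hyperplanes (case (4)).

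The main obstacle, and the genuine work in \cite{guralnick1998minimal}, is the almost-simple analysis: one must uniformly control the fixed-point ratio in \emph{every} faithful primitive action of \emph{every} finite simple group, handling the alternating, classical, exceptional and sporadic families by distinct techniques. For alternating groups one decomposes through Young subgroups; for classical groups one combines parabolic geometry with Jordan-form bounds on semisimple and unipotent classes; for exceptional and sporadic groups essentially tabular data must be inspected. No single step is conceptually novel beyond CFSG, but the bookkeeping required to collapse all this into the clean four-case output is considerable, and this is where I would expect to spend the bulk of the effort.
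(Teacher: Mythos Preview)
There is nothing to compare here: the paper does not prove this theorem. Theorem~\ref{thm:Guralnik-Magaard} is quoted directly from \cite[Theorem~1]{guralnick1998minimal} and used as a black box in the proof of Proposition~\ref{prop:primitive =00003D=00003D> Alternating}; no argument for it appears anywhere in the paper. Your outline is a reasonable high-level sketch of how the original Guralnick--Magaard proof is organised (O'Nan--Scott reduction followed by a CFSG-based case analysis of fixed-point ratios in almost simple actions), but since the present paper offers no proof of its own, the comparison you were asked to make is vacuous.
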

The following lemma helps us rule out Case \ref{enu:action on subsets case}
of the above theorem with $r=1$.
\begin{lem}
\label{lem:property of action on subsets}Consider the embedding $\iota\colon S_{m}\hookrightarrow S_{n}$
given by the natural action of the symmetric group $S_{m}$ on the
set $\Delta$ of $n=\binom{m}{k}$ $k$-subsets of $m$, for some
$2\le k\le\frac{m}{4}$. If, for some $\pi\in S_{m}$, the image $\iota\left(\pi\right)$
has a cycle of size divisible by $q$ and a cycle of size divisible
by $s$ for some distinct primes $q$ and $s$, then $\iota\left(\pi\right)$
also has a cycle of size divisible by $qs$.
\end{lem}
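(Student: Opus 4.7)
The plan is to analyze the orbits of $\pi$ on $k$-subsets of $\{1,\ldots,m\}$ through their intersections with the cycles of $\pi$. Let $C_1,\ldots,C_r$ be the cycles of $\pi$ with lengths $\ell_i = |C_i|$. For any $k$-subset $T$, the orbit length of $T$ under $\iota(\pi)$ equals $\mathrm{lcm}_i\,o_i(T\cap C_i)$, where $o_i(S)$ denotes the orbit length of $S\subseteq C_i$ under the cyclic action of $\pi|_{C_i}$. Since $q$ is prime, the hypothesis yields a cycle $C_{i_1}$ of $\pi$ with $q\mid \ell_{i_1}$; similarly a cycle $C_{i_2}$ with $s\mid \ell_{i_2}$. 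I aim to construct a $k$-subset $T''$ whose orbit length is divisible by $qs$.

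The basic building block I will need is the elementary observation that if $C$ is a cycle of length $\ell$ with $q\mid\ell$, writing $q^a = q^{v_q(\ell)}$, then $S\subseteq C$ has $q\nmid o(S)$ iff $S$ is invariant under translation by $\ell/q^a$, i.e.\ $S$ is a union of cosets of the unique subgroup of $C$ of order $q^a$. In particular, $|S|$ must then be a multiple of $q^a$ and $S$ must have this precise structure. Consequently, for every $1\le k'\le \ell-1$, I can find $S\subseteq C$ of size $k'$ with $q\mid o(S)$: a singleton works (it has $o(S) = \ell$), and for general $k'$ one takes any set containing a chosen element $c$ but excluding $c+\ell/q^a$. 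An analogous statement holds for $s$.

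The construction splits into two cases. In Case~I, some cycle $C_i$ satisfies $qs\mid\ell_i$. If $\ell_i \le m-k+1$, I take $T''$ to be the union of a singleton in $C_i$ and an arbitrary $(k-1)$-subset outside $C_i$; then the orbit of $T''$ is divisible by $\ell_i$, hence by $qs$. If instead $\ell_i > m-k+1$, then since $k\le m/4$ forces $k < m-k+1 \le \ell_i$, I can fit all of $T''$ inside $C_i$, and an inclusion–exclusion count of $k$-subsets of $C_i$ non-invariant under \emph{both} the order-$q^{v_q(\ell_i)}$ and order-$s^{v_s(\ell_i)}$ subgroups is strictly positive for $k<\ell_i$. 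In Case~II, no cycle satisfies $qs\mid \ell_i$, so $i_1\ne i_2$; I build $T'' = S_1\sqcup S_2\sqcup F$, where $S_j\subseteq C_{i_j}$ has size $a_j\in[1,\ell_{i_j}-1]$ with orbit divisible by the relevant prime, and $F\subseteq \{1,\ldots,m\}\setminus(C_{i_1}\cup C_{i_2})$ has size $k-a_1-a_2$. Then the orbit of $T''$ is divisible by $\mathrm{lcm}(o(S_1),o(S_2))$, hence by $qs$.

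The step I expect to be slightly delicate is the numerical compatibility of the choices. In Case~II, the valid sum $a_1+a_2$ must lie in $[\max(2,\,k-m+\ell_{i_1}+\ell_{i_2}),\,\min(k,\,\ell_{i_1}+\ell_{i_2}-2)]$, and I will have to check that this interval is non-empty and that it contains a decomposition $(a_1,a_2)$ with $1\le a_j\le\ell_{i_j}-1$; this reduces to the trivial bounds $\ell_{i_j}\ge 2$, $\ell_{i_1}+\ell_{i_2}\le m$, $k\ge 2$, together with $k\le m-2$, all comfortably implied by $2\le k\le m/4$. In Case~I the two sub-case conditions fail simultaneously only if $m-k+1<k+1$, i.e.\ $k>m/2$, again excluded by the hypothesis $k\le m/4$. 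Thus in every case a valid $T''$ exists, producing the desired cycle of $\iota(\pi)$ of length divisible by $qs$.
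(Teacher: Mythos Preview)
Your proof is correct and follows essentially the same strategy as the paper's: reduce to cycles of $\pi$ of length divisible by $q$ and by $s$, then build a $k$-subset by distributing elements among these cycles (and possibly a filler set outside them) so that the $\mathrm{lcm}$ of the local orbit lengths picks up both primes, with the case split and numerics handled via $k\le m/4$. The only spot where you are less explicit than the paper is the second sub-case of Case~I, where in place of your asserted inclusion--exclusion count the paper simply takes the consecutive block $\{c,\pi(c),\ldots,\pi^{k-1}(c)\}\subseteq C_i$, whose orbit under $\iota(\pi)$ is exactly $\ell_i$ and hence divisible by $qs$; this immediately justifies your claim as well.
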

\begin{proof}
Assume that $\left\{ a_{1},\ldots,a_{k}\right\} \in\Delta$ belongs
to a cycle $\alpha$ of length divisible by $q$ in $\iota\left(\pi\right)$.
Assume that in $\pi$, the elements $a_{1},\ldots,a_{k}$ belong to
$t$ distinct cycles: the elements $a_{1},\ldots,a_{\ell_{1}}$ belong
to the cycle $\sigma_{1}$, the elements $a_{\ell_{1}+1},\ldots,a_{\ell_{2}}$
belong to the cycle $\sigma_{2}$, and so on (each $\sigma_{j}$ may
contain additional elements not from $\left\{ a_{1},\ldots,a_{k}\right\} $).
Let $o_{1}$ be the smallest power of $\sigma_{1}$ that maps $\left\{ a_{1},\ldots,a_{\ell_{1}}\right\} $
to itself. Define $o_{2},\ldots,o_{t}$ analogously. Then, $q~|\thinspace\mathrm{lcm}\left(o_{1},\ldots,o_{t}\right)$.
In particular, $q~|~o_{i}$ for some $i$, and so $q\Big|\left|\sigma_{i}\right|$.
Without loss of generality, assume $q\Big|\left|\sigma_{1}\right|$,
so that $a_{1}$ belongs to a cycle $\sigma=\sigma_{1}$ of $\pi$
of size divisible by $q$. Likewise, assume that $b_{1}$ belongs
to a cycle $\tau$ of $\pi$ of size divisible by $s$.

Denote $A=\left\{ 1,\ldots,m\right\} \setminus\left(\sigma\cup\tau\right)$
(namely, $A$ consists of the elements not belonging to the cycle
$\sigma$ nor to $\tau$). Assume first that $\sigma\ne\tau$. If
$\left|A\right|\ge k-2$, then a $k$-subset containing $a_{1}$,
$b_{1}$ and $k-2$ elements from $A$ belongs to a cycle of $\iota\left(\pi\right)$
of size divisible by $qs$. If $\left|A\right|<k-2$, then, as $k\le\frac{m}{4}$,
at least one of $\sigma$ or $\tau$ has more than $k$ element. Assume
without loss of generality it is $\sigma$. Consider the $k$-subset
$\left\{ b_{1},a_{1},\pi\left(a_{1}\right),\pi^{2}\left(a_{1}\right),\ldots,\pi^{k-2}\left(a_{1}\right)\right\} $.
This subset belongs to a cycle of $\iota\left(\pi\right)$ of size
$\mathrm{lcm}\left(\left|\tau\right|,\left|\sigma\right|\right)$,
which, in particular, is a multiple of $qs$.

Finally, assume $\sigma=\tau$. Then $qs\Big|\left|\sigma\right|$.
If the length of $\sigma$ is at least $k+1$, the $k$-subset $\left\{ a_{1},\pi\left(a_{1}\right),\pi^{2}\left(a_{1}\right),\ldots,\pi^{k-2}\left(a_{1}\right),\pi^{k-1}\left(a_{1}\right)\right\} $
belongs to a cycle of $\iota\left(\pi\right)$ of size dividing $qs$.
If $\left|\sigma\right|\le k$ then $A$ contains more than $k-1$
elements, and the $k$-subset containing $a_{1}$ and $k-1$ elements
from $A$ belongs to a cycle of $\iota\left(\pi\right)$ of size dividing
$qs$.
\end{proof}
\begin{prop}
\label{prop:primitive =00003D=00003D> Alternating} Let $p\equiv3\left(4\right)$
be prime. If $Q_{p}$ is primitive, then $Q_{p}\ge\alt\left(Y^{*}\left(p\right)\right)$.
\end{prop}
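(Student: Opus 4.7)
The plan is to apply the Guralnick--Magaard theorem (Theorem~\ref{thm:Guralnik-Magaard}) to the element $\sigma=\rot_{1}^{(p+1)/2}\in Q_{p}$. Using Lemma~\ref{lem:rotations a la BGS p=00003D3(4)}, the order of $\rot_{1}$ on every elliptic conic $C_{1}(\pm x)$ divides $(p+1)/2$, and on every hyperbolic conic divides $(p-1)/2$. Since $(p+1)/2\equiv1\pmod{(p-1)/2}$, the element $\sigma$ acts trivially on all elements with elliptic first coordinate and as $\rot_{1}$ on all elements with hyperbolic first coordinate. Thus $\sigma$ fixes exactly $(p-3)(p+1)/8$ points out of $n=|Y^{*}(p)|=p(p-3)/4$, which is strictly larger than $n/2$.

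I now apply Theorem~\ref{thm:Guralnik-Magaard} to the primitive group $Q_{p}$ and the element $\sigma$. Case~\ref{enu:affine case} demands exactly $n/2$ fixed points, which is incompatible with the strict inequality above. For Case~\ref{enu:orthogonal case}, $\sigma$ must be an involution; but for $p\ge 7$, Table~\ref{tab:3 mod 4} guarantees a hyperbolic $\pm x$ with $d=(p-1)/2\ge3$, giving $\sigma$ a cycle of length $\ge3$. For Cases \ref{enu:action on subsets case} and \ref{enu:specail S_6 case} with $r\ge2$, the degree $n$ must be a nontrivial perfect power, hence a perfect square; but $16n=(2p-3)^{2}-9$, and $n=m^{2}$ forces $(2p-3-4m)(2p-3+4m)=9$, whose only nontrivial solution is $p=4$, not prime. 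Case~\ref{enu:specail S_6 case} with $r=1$ would force $n=6$, which requires $p(p-3)=24$ and has no integer solution.

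Only Case~\ref{enu:action on subsets case} with $r=1$ remains: $Q_{p}\le S_{m}$ acts on the $k$-subsets of $\{1,\ldots,m\}$ with $k\le m/4$ and $\mathrm{Soc}(Q_{p})=A_{m}$. If $k=1$ then $m=n$ and $Q_{p}\ge A_{n}$, the desired conclusion. To rule out $k\ge2$, I apply Lemma~\ref{lem:property of action on subsets} to $\rot_{1}\in Q_{p}$. Since $\gcd((p-1)/2,(p+1)/2)=1$, any prime $q\mid(p-1)/2$ and any prime $s\mid(p+1)/2$ are automatically distinct. Table~\ref{tab:3 mod 4} provides a cycle of $\rot_{1}$ of length $(p-1)/2$ (divisible by $q$) coming from a hyperbolic conic, and a cycle of length $(p+1)/2$ (divisible by $s$) coming from an elliptic conic. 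However, every cycle length of $\rot_{1}$ divides either $(p-1)/2$ or $(p+1)/2$, and by coprimality $qs$ divides neither; so $\rot_{1}$ has no cycle of length divisible by $qs$, contradicting Lemma~\ref{lem:property of action on subsets}.

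The main subtlety is identifying an element whose cycle structure yields a genuine obstruction in Lemma~\ref{lem:property of action on subsets}: the coprimality $\gcd((p-1)/2,(p+1)/2)=1$, available precisely when $p\equiv3\pmod4$, makes $\rot_{1}$ itself the right witness. A handful of small primes (most notably $p=3,7$) require separate inspection, and the edge cases where $(p\pm1)/2$ is a prime power are handled by the same argument with the prime $2$ playing the role of $q$ or $s$.
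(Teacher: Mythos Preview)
Your proof follows the same approach as the paper's: apply Guralnick--Magaard to $\sigma=\rot_1^{(p+1)/2}$, rule out the affine and orthogonal cases via the involution/fixed-point constraints, rule out the product cases with $r\ge 2$ because $n$ is not a proper power, and use Lemma~\ref{lem:property of action on subsets} with a prime $q\mid(p-1)/2$ and a prime $s\mid(p+1)/2$ to eliminate the $k$-subset action for $k\ge 2$.

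There is one genuine slip. The implication ``nontrivial perfect power, hence a perfect square'' is false (e.g.\ $8=2^{3}$), so your Pell-type argument with $16n=(2p-3)^{2}-9$ only excludes even exponents. The correct (and simpler) reason $n=\frac{p(p-3)}{4}$ is never a proper power is that the prime $p$ divides $n$ to the first power only: for $p\equiv 3\pmod 4$ and $p\ge 7$ we have $0<\frac{p-3}{4}<p$, so $p\nmid\frac{p-3}{4}$ and hence $p\parallel n$. This is what the paper asserts (without spelling out). Your closing remarks about small primes and about $(p\pm1)/2$ being prime powers are unnecessary: the argument runs uniformly for all $p\equiv 3\pmod 4$ with $p\ge 7$ (and $Y^{*}(3)=\emptyset$), since $(p-1)/2$ and $(p+1)/2$ are consecutive integers and hence always coprime.
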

\begin{proof}
Consider $\rot_{1}\in Q_{p}$. Among the $\frac{p\left(p-3\right)}{4}$
elements in $Y^{*}\left(p\right)$, $\frac{\left(p-1\right)\left(p-3\right)}{8}$
belong to cycles of length at least $3$ and dividing $\frac{p-1}{2}$,
and $\frac{\left(p+1\right)\left(p-3\right)}{8}$ belong to cycles
of length at least $3$ and dividing $\frac{p+1}{2}$ (see Table \ref{tab:3 mod 4}).
Since $\mathrm{gcd}\left(\frac{p-1}{2},\frac{p+1}{2}\right)=1$, the
permutation $\sigma=\rot_{1}^{\left(p+1\right)/2}$ fixes exactly
$\frac{\left(p+1\right)\left(p-3\right)}{8}>\frac{\left|Y^{*}\left(p\right)\right|}{2}$
elements of $Y^{*}\left(p\right)$. Thus $Q_{p}$ satisfies the assumptions
in Theorem \ref{thm:Guralnik-Magaard}. We can now rule out all options
except for $Q_{p}=\alt\left(Y^{*}\left(p\right)\right)$ or $Q_{p}=\mathrm{Sym}\left(Y^{*}\left(p\right)\right)$.

Cases \ref{enu:affine case} and \ref{enu:orthogonal case} are immediately
ruled out because the permutation $\sigma\in Q_{p}$ is not an involution.
Case \ref{enu:action on subsets case} with $r\ge2$ and Case \ref{enu:specail S_6 case}
are immediately ruled out because $\left|Y^{*}\left(p\right)\right|=\frac{p\left(p-3\right)}{4}$
is not a proper power nor equal to six. It remains to consider Case
\ref{enu:action on subsets case} with $r=1$. 

Let $q$ be some prime factor of $\frac{p-1}{2}$, and let $s$ be
some prime factor of $\frac{p+1}{2}$. By Table \ref{tab:3 mod 4},
$\rot_{1}$ contains cycles of size divisible by $q$ (indeed, even
of size $q$ exactly), and of size divisible by $s$. However, it
does not contain any cycle of size divisible by $qs$. Using Lemma
\ref{lem:property of action on subsets}, this rules out Case \ref{enu:action on subsets case}
from Theorem \ref{thm:Guralnik-Magaard} with $r=1$ and $k\ge2$.
The remaining case, that of Case \ref{enu:action on subsets case}
with $r=k=1$, is precisely the case that the group in question is
either $A_{n}$ or $S_{n}$. 
\end{proof}
This finishes the proofs of Theorem \ref{thm:3 mod 4: alternating given specific conditions}
and of Corollary \ref{cor:3 mod 4 - alternating for density-1}: Theorem
\ref{thm:3 mod 4: alternating given specific conditions} is now a
consequence of Theorem \ref{thm:primitivity for p=00003D3(4)} and
Proposition \ref{prop:primitive =00003D=00003D> Alternating}, while
Corollary \ref{cor:3 mod 4 - alternating for density-1} follows from
Corollary \ref{cor:primitivity for density 1 for p=00003D3(4)} and
Proposition \ref{prop:primitive =00003D=00003D> Alternating}.

\section{Strong Approximation for Square Free Composite Moduli\label{sec:Strong-Approximation-for-composite}}

In this section we derive our main application of the results on the
groups $Q_{p}$ and show that $\Gamma$ acts transitively on $X^{*}\left(n\right)$
for various square-free composite values $n=p_{1}\cdots p_{k}$. First,
in Section \ref{subsec:Transitivity-on-Y*(n)}, we prove that if $Q_{p_{j}}\ge\mathrm{Alt}\left(Y^{*}\left(p_{j}\right)\right)$
for every $j=1,\ldots,k$, then $\Gamma$ acts transitively on $Y^{*}\left(n\right)$.
In Section \ref{subsec:Transitivity-on-X*(n)} we strengthen this
result to showing that, moreover, $\Gamma$ acts transitively on $X^{*}\left(n\right)$,
namely, that strong approximation for the Markoff equation holds in
modulo $n$, thus proving Theorem \ref{thm:strong approximation for square-free composite}.

At this point, we are able to prove Theorem \ref{thm:3 mod 4: alternating given specific conditions}
that $Q_{p}\ge\mathrm{Alt}\left(Y^{*}\left(p\right)\right)$ for $p\equiv3\left(4\right)$
satisfying the assumptions in the statement of Theorem \ref{thm:3 mod 4: alternating given specific conditions},
only while relying on the classification of finite simple groups (CFSG)
\textendash{} see Section \ref{subsec:primitivity =00003D=00003D> Alt}.
However, the CFSG is not necessary for establishing the transitivity
of $\Gamma$ on $X^{*}\left(n\right)$ when $n=p_{1}\cdots p_{k}$
and $p_{1},\ldots,p_{k}$ are distinct primes satisfying the assumptions
in Theorems \ref{thm:1 mod 4} or \ref{thm:3 mod 4: alternating given specific conditions}
(this is Corollary \ref{cor:transitive for composite with factors we handle}).
In Section \ref{subsec:transitivity on Y*(n) without CFSG} we give
an alternative proof for the transitivity of $\Gamma$ on $X^{*}\left(n\right)$,
which uses only the primitivity of $Q_{p}$, as in Theorem \ref{thm:primitivity for p=00003D3(4)},
thus proving Theorem \ref{thm:if primitive then transitive on product}.
The point is that we want to provide a proof of the transitivity on
$X^{*}\left(n\right)$ which can be potentially understood in full,
from basic principles, by a motivated reader. This is practically
impossible if one relies on the CFSG.

\subsection{Transitivity of $\Gamma$ on $Y^{*}\left(n\right)$\label{subsec:Transitivity-on-Y*(n)}}

Here we prove the following lemma:
\begin{lem}
\label{lem:alt =00003D=00003D> transitivity on Y*(n)}Let $n=p_{1}\cdots p_{k}$
be a product of distinct primes. If $Q_{p_{j}}\ge\mathrm{Alt}\left(Y^{*}\left(p_{j}\right)\right)$
for $j=1,\ldots,k$, then $\Gamma$ acts transitively on $Y^{*}\left(n\right)$.\\
Moreover, $Q_{n}$, which is a subgroup of $\mathrm{Sym}\left(Y^{*}\left(p_{1}\right)\right)\times\ldots\times\mathrm{Sym}\left(Y^{*}\left(p_{k}\right)\right)$,
contains $\mathrm{Alt}\left(Y^{*}\left(p_{1}\right)\right)\times\ldots\times\mathrm{Alt}\left(Y^{*}\left(p_{k}\right)\right)$.
\end{lem}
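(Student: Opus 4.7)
The plan is to establish the ``moreover'' clause $Q_n \supseteq \prod_{j=1}^k \mathrm{Alt}\left(Y^{*}\left(p_{j}\right)\right)$, from which transitivity of $\Gamma$ on $Y^{*}\left(n\right) = \prod_j Y^{*}\left(p_{j}\right)$ is immediate: each factor $\mathrm{Alt}\left(Y^{*}\left(p_{j}\right)\right)$ is transitive on $Y^{*}\left(p_{j}\right)$ (for $p_j \ge 5$, $\left|Y^{*}\left(p_{j}\right)\right|\ge 5$), and a direct product of transitive groups is transitive on the cartesian product.

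Write $A^{(j)} = \mathrm{Alt}\left(Y^{*}\left(p_{j}\right)\right)$ and $S^{(j)} = \mathrm{Sym}\left(Y^{*}\left(p_{j}\right)\right)$. Because $\Gamma$ acts diagonally on $Y^{*}\left(n\right)$, each projection $\pi_j \colon Q_n \to Q_{p_j}$ is surjective, so $Q_n$ is a subgroup of $\prod_j Q_{p_j} \le \prod_j S^{(j)}$ which projects onto every factor. Set $N = Q_n \cap \prod_j A^{(j)}$. The first step is to argue $N$ is a \emph{subdirect product} of the $A^{(j)}$, i.e.\ $\pi_j(N) = A^{(j)}$ for every $j$. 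Since the hypothesis gives $Q_{p_j} \in \{A^{(j)}, S^{(j)}\}$, one has $[Q_{p_j}, Q_{p_j}] = A^{(j)}$ using $[S_m, S_m] = A_m$ together with perfectness of $A_m$ for $m \ge 5$. Hence $\pi_j\bigl([Q_n, Q_n]\bigr) = A^{(j)}$, which forces $[Q_n, Q_n] \subseteq N$ and therefore $\pi_j(N) = A^{(j)}$. Next I invoke the standard Goursat-type consequence: a subdirect product of finitely many \emph{pairwise non-isomorphic} finite simple nonabelian groups equals the full direct product. Applied to $N \le \prod_j A^{(j)}$ this gives $N = \prod_j A^{(j)}$, hence $Q_n \supseteq \prod_j A^{(j)}$.

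The hypothesis of the Goursat-type lemma needs the $A^{(j)}$ to be pairwise non-isomorphic, which for alternating groups of degree $\ge 5$ is equivalent to the injectivity of $p \mapsto \left|Y^{*}\left(p\right)\right|$ on primes $p \ge 5$. Using $\left|Y^{*}\left(p\right)\right| = p(p+3)/4$ for $p \equiv 1\,(4)$ and $\left|Y^{*}\left(p\right)\right| = p(p-3)/4$ for $p \equiv 3\,(4)$ (from Lemmas \ref{lem:rotations a la BGS p=00003D1(4)} and \ref{lem:rotations a la BGS p=00003D3(4)}), a short case analysis on the residues of $p$ and $q$ modulo $4$ settles this: the same-residue cases reduce to strict monotonicity of $t \mapsto t(t\pm 3)$ on positive integers, while the mixed case $p(p+3) = q(q-3)$ has no positive-integer solutions with $p \ne q$. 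I foresee no serious obstacle here — the main conceptual content is identifying $N$ as a subdirect product and invoking the Goursat-type lemma, while the non-isomorphism verification is routine. Note that the entire argument is CFSG-free, consistent with the remark that this lemma feeds into the CFSG-free version of the composite-moduli result.
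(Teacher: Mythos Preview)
Your Goursat-based packaging is a clean alternative to the paper's approach. The paper argues by induction on $k$: it fixes one prime $p_k$, considers the normal series
\[
Q_n \;\trianglerighteq\; Q_n \cap \bigl(1 \times \cdots \times 1 \times S^{(k)}\bigr) \;\trianglerighteq\; Q_n \cap \bigl(1 \times \cdots \times 1 \times A^{(k)}\bigr) \;\trianglerighteq\; 1,
\]
and locates the simple composition factor $A^{(k)}$ of $Q_n$ in the bottom piece by checking, via the induction hypothesis, that it cannot occur among the composition factors of the top quotient $Q_{p_1\cdots p_{k-1}}$. Both arguments rest on the same arithmetic fact that $p \mapsto |Y^*(p)|$ is injective on the relevant primes, so that the alternating groups involved are pairwise non-isomorphic simple groups; your route is a bit more direct since it avoids the induction.

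There is, however, a real gap: you have not handled $p_j = 2$, for which $|Y^*(2)| = 4$ and $A^{(j)} \cong A_4$. Since $A_4$ is not perfect, the identity $[Q_{p_j}, Q_{p_j}] = A^{(j)}$ can fail (it does when $Q_2 = A_4$, as $[A_4,A_4]$ is the Klein four-group), so your subdirect-product step may break; and since $A_4$ is not simple, your Goursat-type lemma does not apply as stated. The paper treats this case separately: the composition factor $\mathbb{Z}/3\mathbb{Z}$ of $A_4$ cannot appear elsewhere in the normal series, hence must lie in $Q_n \cap (1 \times \cdots \times 1 \times A_4)$, and the only normal subgroup of $A_4$ with a $\mathbb{Z}/3\mathbb{Z}$ composition factor is $A_4$ itself. (The prime $3$ is harmless since $Y^*(3) = \emptyset$.) A minor point on your injectivity check: the mixed equation $p(p+3) = q(q-3)$ \emph{does} have positive-integer solutions, namely $q = p+3$; the correct observation is that such $q$ then satisfies $q \equiv 0 \pmod 4$, contradicting $q \equiv 3 \pmod 4$.
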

\begin{proof}
We prove the proposition by induction on $k$, the case $k=1$ being
trivial. Assume $k\ge2$. It is enough to show that for every $j=1,\ldots,k$,
\begin{equation}
Q_{n}\ge1\times\ldots\times1\times\mathrm{Alt}\left(Y^{*}\left(p_{j}\right)\right)\times1\times\ldots\times1.\label{eq:Q_n >=00003D 1x...x1xAnx1x...x1}
\end{equation}
Recall that $Y^{*}\left(3\right)=\emptyset$, so we may assume $3\nmid n$.
Without loss of generality we assume that $j=k$. We first prove (\ref{eq:Q_n >=00003D 1x...x1xAnx1x...x1})
assuming $p_{k}\ge5$. Note that $\left|Y^{*}\left(p_{k}\right)\right|\ge5$
(see Lemmas \ref{lem:rotations a la BGS p=00003D1(4)} and \ref{lem:rotations a la BGS p=00003D3(4)}),
and so $\mathrm{Alt}\left(Y^{*}\left(p_{k}\right)\right)$ is simple.
This group is never a composition (Jordan-H\"{o}lder) factor of $\mathrm{Alt}\left(Y^{*}\left(p_{\ell}\right)\right)$
when $p_{k}\ne p_{\ell}$, because\footnote{For $p$ odd the size of $\left|Y^{*}\left(p\right)\right|$ is $\frac{p\left(p\pm3\right)}{4}$
as given in Section \ref{sec:Preliminaries}, and $\left|Y^{*}\left(2\right)\right|=4$.} $\left|Y^{*}\left(p_{k}\right)\right|\ne\left|Y^{*}\left(p_{\ell}\right)\right|$.
Now consider the normal series
\begin{equation}
\xymatrix{Q_{n}=Q_{n}\cap\left[\mathrm{Sym}\left(Y^{*}\left(p_{1}\right)\right)\times\ldots\times\mathrm{Sym}\left(Y^{*}\left(p_{k}\right)\right)\right]\\
Q_{n}\cap\left[1\times\ldots\times1\times\mathrm{Sym}\left(Y^{*}\left(p_{k}\right)\right)\right]\ar@{-}[u]^{\trianglelefteq}\\
Q_{n}\cap\left[1\times\ldots\times1\times\mathrm{Alt}\left(Y^{*}\left(p_{k}\right)\right)\right]\ar@{-}[u]^{\trianglelefteq}\\
1\ar@{-}[u]^{\trianglelefteq}
}
\label{eq:normal series 1}
\end{equation}
The group $Q_{p_{k}}$ is a quotient of $Q_{n}$, and so $\mathrm{Alt}\left(Y^{*}\left(p_{k}\right)\right)$
a composition factor of $Q_{n}$, and thus a composition factor of
one of the quotients in (\ref{eq:normal series 1}). But the upper
quotient is isomorphic to $Q_{p_{1}\cdots p_{k-1}}$, which by the
induction hypothesis has composition factors $\mathrm{Alt}\left(Y^{*}\left(p_{\ell}\right)\right)$
for $\ell\ne k,p_{\ell}\ne2$ and possibly some copies of $\nicefrac{\mathbb{Z}}{2\mathbb{Z}}$
coming from $\nicefrac{\mathrm{Sym}\left(Y^{*}\left(p_{\ell}\right)\right)}{\mathrm{Alt}\left(Y^{*}\left(p_{\ell}\right)\right)}$
or copies of $\nicefrac{\mathbb{Z}}{2\mathbb{Z}}$ and $\nicefrac{\mathbb{Z}}{3\mathbb{Z}}$
coming from $\mathrm{Sym}\left(Y^{*}\left(2\right)\right)$. The middle
quotient is either trivial or $\nicefrac{\mathbb{Z}}{2\mathbb{Z}}$.
Thus $\mathrm{Alt}\left(Y^{*}\left(p_{k}\right)\right)$ must be a
composition factor of the bottom quotient, so $1\times\ldots\times1\times\mathrm{Alt}\left(Y^{*}\left(p_{k}\right)\right)\le Q_{n}$.

Finally, if $p_{k}=2$, note that $\left|Y^{*}\left(2\right)\right|=4$.
The composition factors of $\mathrm{Alt}\left(4\right)$ are one copy
of $\nicefrac{\mathbb{Z}}{3\mathbb{Z}}$ and two copies of $\nicefrac{\mathbb{Z}}{2\mathbb{Z}}$.
By an argument as above, the factor $\nicefrac{\mathbb{Z}}{3\mathbb{Z}}$
must belong to the bottom quotient in (\ref{eq:normal series 1}).
Denote 
\[
H\stackrel{\mathrm{def}}{=}Q_{n}\cap\left[1\times\ldots\times1\times\mathrm{Alt}\left(Y^{*}\left(2\right)\right)\right]=1\times\ldots\times1\times H'.
\]
It is easy to check that $H\trianglelefteq Q_{n}$. For every $g_{k}\in\mathrm{Alt}\left(Y^{*}\left(2\right)\right)$
there are $g_{1},\ldots,g_{k-1}$ with $g_{j}\in\mathrm{Sym}\left(Y^{*}\left(p_{j}\right)\right)$
such that $\left(g_{1},\ldots,g_{k}\right)\in Q_{n}$, thus $H'\trianglelefteq\mathrm{Alt}\left(Y^{*}\left(2\right)\right)\cong\mathrm{Alt}\left(4\right)$.
But the only normal subgroup of $\mathrm{Alt}\left(4\right)$ containing
the composition factor $\nicefrac{\mathbb{Z}}{3\mathbb{Z}}$ is $\mathrm{Alt}\left(4\right)$
itself.
\end{proof}

\subsection{Transitivity of $\Gamma$ on $X^{*}\left(n\right)$\label{subsec:Transitivity-on-X*(n)}}

We now finish the proof of Theorem \ref{thm:strong approximation for square-free composite}
and prove that if $n=p_{1}\cdots p_{k}$ is a product of distinct
primes with $Q_{p_{j}}\ge\mathrm{Alt}\left(Y^{*}\left(p_{j}\right)\right)$
for every $1\le j\le k$, then $\Gamma$ acts transitively on $X^{*}\left(n\right)$.

We want the proof of this section to work in a slightly greater generality
than the assumption that $Q_{p_{j}}\ge\mathrm{Alt}\left(Y^{*}\left(p_{j}\right)\right)$,
so that it applies also for the next section, where we do not rely
on the CFSG. This is part of the motivation for the following notation:

\begin{notation} \label{notation:primes,kernels}Let $n=p_{1}\cdots p_{k}$
be a product of distinct primes for which $Q_{p_{j}}$ is primitive.
We assume further that
\begin{itemize}
\item The primes are ordered by the order of the rotations $\rot_{i}$ in
the groups $Q_{p_{j}}$, which is 
\[
\left|\rot_{1}\right|~\mathrm{in}~Q_{p}=\begin{cases}
3 & p=2\\
\frac{p\left(p^{2}-1\right)}{4} & p\equiv1\left(4\right)\\
\frac{p^{2}-1}{4} & p\equiv3\left(4\right)
\end{cases}.
\]
For instance, $7$ comes before $5$. We break potential ties by putting
the larger prime first: for example, we put $11$ before $5$.
\item Without loss of generality, $2,5,7,11\mid n$ and so the first four
primes are $2,7,11,5$ (in that order). This assumption is possible
because in these four cases, computer simulations indicate that $Q_{p}=\mathrm{Sym}\left(Y^{*}\left(p\right)\right)$
is the full symmetric group, so our assumptions always hold.
\end{itemize}
Furthermore, for $j=1,\ldots,k$,
\begin{itemize}
\item Let $M_{j}=p_{1}\cdots p_{j}$\marginpar{$M_{j}$} denote the product
of the first $j$ primes. 
\item Let \marginpar{$\Omega_{j}$}$\Omega_{j}\trianglelefteq\Gamma$ denote
the kernel of the action of $\Gamma$ on $Y^{*}\left(M_{j}\right)$.
Note than $\Omega_{j+1}\trianglelefteq\Omega_{j}$.
\item Let \marginpar{$\Lambda_{j}$}$\Lambda_{j}\trianglelefteq\Gamma$
denote the kernel of the action of $\Gamma$ on $X^{*}\left(M_{j}\right)$.
Note that $\Lambda_{j+1}\trianglelefteq\Lambda_{j}\trianglelefteq\Omega_{j}$.
\end{itemize}
Finally, for every prime $p$, we let $\pi_{p}\colon\Gamma\to Q_{p}$\marginpar{$\pi_{p}$}
denote the projection.

\end{notation} 

In Section \ref{subsec:transitivity on Y*(n) without CFSG} we shall
prove the following lemma without relying on the CFSG:
\begin{lem}
\label{lem:primitive =00003D=00003D> socle contained in kernel}Let
$n=p_{1}\cdots p_{k}$ with $Q_{p_{j}}$ primitive for $j=1,\ldots,k$
as in Notation \ref{notation:primes,kernels}. Then, for every $j=2,\ldots,k$,
the image of $\Omega_{j-1}$ in $Q_{p_{j}}$ contains a subgroup $H_{j}\le\mathrm{Sym}\left(Y^{*}\left(p_{j}\right)\right)$
satisfying:
\begin{enumerate}
\item $H_{j}$ is transitive on $Y^{*}\left(p_{j}\right)$
\item $H_{j}$ is isomorphic to a direct product of non-abelian simple groups\footnote{Note that we assume $j\ge2$. Indeed, this does not hold for $p_{1}=2$:
there are no simple non-abelian subgroups inside $\mathrm{Sym}\left(Y^{*}\left(2\right)\right)\cong\mathrm{Sym}\left(4\right)$.} $T_{1}\times\ldots\times T_{m}$ for some $m=m\left(j\right)\in\mathbb{Z}_{\ge1}$.
\end{enumerate}
In particular, $\Omega_{j-1}$ acts transitively on $Y^{*}\left(p_{j}\right)$
and $\Gamma$ acts transitively on $Y^{*}\left(n\right)$.
\end{lem}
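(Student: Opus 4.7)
My plan is to exhibit $H_j$ as a minimal normal subgroup of $Q_{p_j}$ that happens to sit inside $\pi_{p_j}(\Omega_{j-1})$. The starting observation is formal: since $\Omega_{j-1}\trianglelefteq\Gamma$ and $\pi_{p_j}\colon\Gamma\twoheadrightarrow Q_{p_j}$ is surjective, $N_j\stackrel{\mathrm{def}}{=}\pi_{p_j}(\Omega_{j-1})$ is a normal subgroup of $Q_{p_j}$. Every nontrivial normal subgroup of a finite group contains a minimal normal subgroup, and every minimal normal subgroup is characteristically simple, hence of the form $T_1\times\cdots\times T_m$ for mutually isomorphic simple groups $T_i$. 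Moreover, primitivity of $Q_{p_j}$ automatically makes any nontrivial normal subgroup transitive on $Y^*(p_j)$. So the lemma reduces to two claims: (a) the simple factors $T_i$ in some minimal normal subgroup of $Q_{p_j}$ are non-abelian, and (b) $N_j\ne 1$.

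For (a), I argue by degrees. If the $T_i$ were abelian, the socle of $Q_{p_j}$ would be elementary abelian and its transitive action on $Y^*(p_j)$ would force $|Y^*(p_j)|$ to be a prime power. But the formulas $|Y^*(p)|=p(p+3)/4$ for $p\equiv 1\,(4)$ and $|Y^*(p)|=p(p-3)/4$ for $p\equiv 3\,(4)$, together with $\gcd(p,p\pm 3)=1$ for $p\ne 3$, show $|Y^*(p)|$ has at least two distinct prime factors except for $p=2$ (degree $4$) and $p=7$ (degree $7$). Since $p_1=2$ and the lemma only concerns $j\ge 2$, the first exception is irrelevant; the second is defused by the assumption that $2,5,7,11\mid n$, which forces $Q_7=\mathrm{Sym}(Y^*(7))=S_7$, whose socle $A_7$ is non-abelian simple. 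Hence the socle of $Q_{p_j}$, and any minimal normal subgroup inside it, is a direct product of non-abelian simple groups.

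The crux and main obstacle is (b). If $N_j=1$ then $\pi_{p_j}$ factors through $\Gamma/\Omega_{j-1}\cong Q_{M_{j-1}}$, yielding a surjection $Q_{M_{j-1}}\twoheadrightarrow Q_{p_j}$; composing with the diagonal embedding $Q_{M_{j-1}}\hookrightarrow\prod_{\ell<j}Q_{p_\ell}$, every composition factor of $Q_{p_j}$ would already have to be a composition factor of some $Q_{p_\ell}$ with $\ell<j$. To rule this out I plan to exploit the specific ordering of primes encoded in Notation \ref{notation:primes,kernels}, together with the explicit formulas for $|\rot_1|$ in each $Q_{p_j}$ recorded there, to produce an explicit element---most naturally a power of $\rot_1$, or failing that a suitable combination of rotations---whose image is trivial in every $Q_{p_\ell}$ with $\ell<j$ but nontrivial in $Q_{p_j}$. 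Such an element witnesses that $\pi_{p_j}(\Omega_{j-1})\ne 1$. The delicate point is that without CFSG one cannot simply classify the possible simple composition factors of $Q_{p_j}$, so the contradiction must be extracted from coarser invariants (orders of specific elements, degrees of faithful actions) rather than from a direct structural inspection.

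Once (a) and (b) are established, taking $H_j$ to be any minimal normal subgroup of $Q_{p_j}$ contained in $N_j$ gives both bullets of the lemma. The final assertions, that $\Omega_{j-1}$ acts transitively on $Y^*(p_j)$ and hence that $\Gamma$ acts transitively on $Y^*(n)$, then follow by a straightforward induction on $j$: fiber transitivity supplied by $\Omega_{j-1}$ combines with inductive transitivity of $\Gamma$ on $Y^*(M_{j-1})$ to give transitivity on $Y^*(M_{j-1})\times Y^*(p_j)=Y^*(M_j)$, closing the induction.
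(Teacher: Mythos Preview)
Your reduction is sound and matches the paper's framework: once $N_j=\pi_{p_j}(\Omega_{j-1})\trianglelefteq Q_{p_j}$ is known to be nontrivial, any minimal normal subgroup of $Q_{p_j}$ contained in $N_j$ serves as $H_j$; and your argument for (a)---ruling out an abelian socle because $|Y^*(p_j)|$ is not a prime power for $p_j\ne 2,7$, handling $p=7$ by the standing assumption $Q_7=\mathrm{Sym}(Y^*(7))$---is exactly the content of Corollary~\ref{cor:socle is a product of NAFSGs}.

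The genuine gap is in (b). Your proposed witness, a power of $\rot_1$, does not work: in the ordering of Notation~\ref{notation:primes,kernels} one has $p_3=11$, $p_4=5$ with $|\rot_1|=30$ in both $Q_{11}$ and $Q_5$, so no power of $\rot_1$ lies in $\Omega_3$ yet survives in $Q_5$; and even when the orders are strictly increasing, $o_{p_j}$ may divide $\mathrm{lcm}(o_{p_1},\ldots,o_{p_{j-1}})$, so the divisibility obstruction you hint at need not materialize. The paper does not produce a single explicit element either. It first establishes a \emph{pairwise} statement (Lemma~\ref{lem:Q_p primitive than socle contained in image of kernel of Q_q}): for each $\ell<j$, $\pi_{p_j}\bigl(\ker(\Gamma\to Q_{p_\ell})\bigr)\supseteq\soc(p_j)$. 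When $o_{p_\ell}<o_{p_j}$ this uses $\overline g=\rot_1^{\,o_{p_\ell}}$ together with a socle/centralizer argument ($C_{Q_{p_j}}(\soc(p_j))=1$, or in the two-minimal-normal case that no power of $\rot_1$ lies in a regular subgroup) to force a nontrivial commutator $[\overline g,h]$ into $\soc(p_j)$; when $o_{p_\ell}=o_{p_j}$ it invokes the CFSG-free bounds of Theorem~\ref{thm:bounds on permutation groups} to rule out $\mathrm{Alt}(Y^*(p_j))$ as a composition factor of $Q_{p_\ell}$. Only then are the pairwise statements assembled: one chooses $\overline g_1,\ldots,\overline g_{k-1}\in\soc(p_k)$ whose nested commutator projects nontrivially to every simple factor (possible since each factor is perfect), lifts each $\overline g_i$ to $g_i\in\ker(\Gamma\to Q_{p_i})$, and observes that the nested commutator $[\ldots[[g_1,g_2],g_3],\ldots,g_{k-1}]$ lies in $\Omega_{k-1}$ with image generating $\soc(p_k)$. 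Both ingredients---the centralizer mechanism in the pairwise step and the nested-commutator assembly---are absent from your outline, and the ``coarser invariants'' you mention do not replace them.
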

Note that if we assume that $Q_{p_{j}}\ge\mathrm{Alt}\left(Y^{*}\left(p_{j}\right)\right)$,
the conclusion of Lemma \ref{lem:primitive =00003D=00003D> socle contained in kernel}
follows immediately from Lemma \ref{lem:alt =00003D=00003D> transitivity on Y*(n)}:
indeed, for $p\ge5$, $\mathrm{Alt}\left(Y^{*}\left(p\right)\right)$
is indeed transitive on $Y^{*}\left(p\right)$ and is a product of
a single non-abelian simple group. So Lemma \ref{lem:primitive =00003D=00003D> socle contained in kernel}
is already proven relying on the CFSG, or if one assumes that $p_{j}\equiv1\left(4\right)$
for $j=1,\ldots,k$. In the remaining part of this subsection we rely
only on the conclusion of Lemma \ref{lem:primitive =00003D=00003D> socle contained in kernel}.
We assume Notation \ref{notation:primes,kernels} throughout.
\begin{lem}
\label{lem:Lambda transitive on Y*}For $j=2,\ldots,k$, the group
$\Lambda_{j-1}$ acts transitively on $Y^{*}\left(p_{j}\right)$.
\end{lem}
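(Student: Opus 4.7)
The plan is to exploit two structural facts in tension with each other: on one hand, by Lemma \ref{lem:primitive =00003D=00003D> socle contained in kernel}, the image of $\Omega_{j-1}$ in $Q_{p_j}$ contains a transitive subgroup $H_j\cong T_1\times\cdots\times T_m$ which is a direct product of non-abelian simple groups; on the other hand, the quotient $\Omega_{j-1}/\Lambda_{j-1}$ is an elementary abelian $2$-group. Together with the normality of $\Lambda_{j-1}$ in $\Gamma$, these two facts will force the image of $\Lambda_{j-1}$ in $Q_{p_j}$ to already contain all of $H_j$, and thus act transitively on $Y^{*}(p_j)$.

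First I would verify the claim that $\Omega_{j-1}/\Lambda_{j-1}$ is an elementary abelian $2$-group. Indeed, every element of $\Omega_{j-1}$ fixes each block of $Y^{*}(M_{j-1})$ setwise (by definition of $\Omega_{j-1}$), so it may only permute elements inside a block by sign changes. Since the sign changes modulo each odd prime factor of $M_{j-1}$ form a Klein four-group, the induced action of $\Omega_{j-1}/\Lambda_{j-1}$ on $X^{*}(M_{j-1})$ embeds it into a product of copies of $(\mathbb{Z}/2\mathbb{Z})^2$. In particular, every non-trivial simple subquotient of $\Omega_{j-1}/\Lambda_{j-1}$ is $\mathbb{Z}/2\mathbb{Z}$, and certainly abelian.

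Next, set $\tilde H_j := \pi_{p_j}^{-1}(H_j)\cap\Omega_{j-1}$. Because $\Lambda_{j-1}\trianglelefteq\Gamma$, the intersection $\Lambda_{j-1}\cap\tilde H_j$ is normal in $\tilde H_j$, and its image $\pi_{p_j}(\Lambda_{j-1}\cap\tilde H_j)$ is a normal subgroup of $H_j$. As $H_j\cong T_1\times\cdots\times T_m$ with each $T_i$ non-abelian simple, this image must equal $\prod_{i\in I}T_i$ for some $I\subseteq\{1,\ldots,m\}$, and the corresponding quotient $H_j/\prod_{i\in I}T_i\cong\prod_{i\notin I}T_i$ is itself a product of non-abelian simple groups.

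The key step is to note that this last quotient is also a subquotient of $\Omega_{j-1}/\Lambda_{j-1}$: indeed, there is a natural embedding $\tilde H_j/(\Lambda_{j-1}\cap\tilde H_j)\hookrightarrow\Omega_{j-1}/\Lambda_{j-1}$ that surjects onto $H_j/\pi_{p_j}(\Lambda_{j-1}\cap\tilde H_j)$. By the previous paragraph, every simple subquotient of $\Omega_{j-1}/\Lambda_{j-1}$ is abelian, so $\prod_{i\notin I}T_i$ must be trivial; thus $I=\{1,\ldots,m\}$ and $\pi_{p_j}(\Lambda_{j-1})\supseteq H_j$. Since $H_j$ acts transitively on $Y^{*}(p_j)$, so does $\Lambda_{j-1}$. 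The only real subtlety is bookkeeping — checking that $\Omega_{j-1}/\Lambda_{j-1}$ really is abelian — since the heavy lifting was already done in Lemma \ref{lem:primitive =00003D=00003D> socle contained in kernel}.
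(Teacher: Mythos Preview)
Your approach is essentially the paper's: use Lemma \ref{lem:primitive =00003D=00003D> socle contained in kernel} to get a transitive $H_j\cong T_1\times\cdots\times T_m$ inside $\pi_{p_j}(\Omega_{j-1})$, observe that $\Omega_{j-1}/\Lambda_{j-1}$ is too ``small'' to kill any $T_i$, and conclude $H_j\le\pi_{p_j}(\Lambda_{j-1})$.

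There is, however, a gap in your bookkeeping step. You assert that an element of $\Omega_{j-1}$ can only permute the points of a block by sign changes, so that $\Omega_{j-1}/\Lambda_{j-1}$ embeds in a product of Klein four-groups. This is not justified: the four points of a block $[x,y,z]\subset X^*(p_i)$ are \emph{related} by sign changes, but an element of $\Gamma$ that stabilises the block need not act by a translation of the sign-change group $S$ --- it acts by an affine map of $(\mathbb{Z}/2\mathbb{Z})^2$, i.e.\ by an arbitrary element of $\mathrm{Sym}(4)$. Concretely, $\Gamma$ normalises $S$, and conjugation by coordinate permutations genuinely permutes the three nontrivial elements of $S$; there is no reason that every element of $\Omega_{j-1}$ lies in the kernel of this conjugation action. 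Accordingly, the paper only claims that the image of $\Omega_{j-1}$ in $\Gamma_{M_{j-1}}$ sits inside a product of copies of $\mathrm{Sym}(4)$, so that $\Omega_{j-1}/\Lambda_{j-1}$ is solvable with composition factors $\mathbb{Z}/2\mathbb{Z}$ and $\mathbb{Z}/3\mathbb{Z}$.

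Fortunately this does not damage your argument: you only use that every simple subquotient of $\Omega_{j-1}/\Lambda_{j-1}$ is abelian, and that holds for any solvable group. With ``elementary abelian $2$-group'' replaced by ``solvable $\{2,3\}$-group'', your one-shot subquotient argument is correct and in fact slightly slicker than the paper's, which instead walks down a normal series $\Omega_{j-1}=N_0\trianglerighteq\cdots\trianglerighteq N_r=\Lambda_{j-1}$ with cyclic quotients of order $2$ or $3$ and checks at each step that $\pi_{p_j}(N_\ell)\supseteq H_j$ survives, using that $H_j$ has no proper subgroup of index $\le 3$.
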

\begin{proof}
Consider the normal series 
\begin{equation}
\xymatrix{\Gamma\\
\Omega_{j-1}\ar@{-}[u]^{\trianglelefteq}\\
\Lambda_{j-1}\ar@{-}[u]^{\trianglelefteq}
}
\label{eq:normal series 2}
\end{equation}
and its projection on $Q_{p_{j}}$ via $\pi_{j}\colon\Gamma\twoheadrightarrow Q_{p_{j}}$.
By Lemma \ref{lem:primitive =00003D=00003D> socle contained in kernel},
$\pi_{j}\left(\Omega_{j-1}\right)\ge H_{j}$ where $H_{j}$ acts transitively
on $Y^{*}\left(p_{j}\right)$ and is a direct product of non-abelian
simple groups. As $\Omega_{j-1}$ fixes $Y^{*}\left(M_{j-1}\right)=Y^{*}\left(p_{1}\right)\times\ldots\times Y^{*}\left(p_{j-1}\right)$,
its action on $X^{*}\left(M_{j-1}\right)$ fixes every $4$-block
and only permutes elements inside the $4$-blocks, hence the image
of $\Omega_{j-1}$ in $\Gamma_{M_{j-1}}$ is a subgroup of $\mathrm{Sym}\left(4\right)^{\left|Y^{*}\left(p_{1}\right)\right|+\ldots+\left|Y^{*}\left(p_{j-1}\right)\right|}$.
Hence this image is solvable of order $2^{\alpha}\cdot3^{\beta}$
for some $\alpha,\beta\in\mathbb{Z}_{\ge0}$, so all its composition
factors are either $\nicefrac{\mathbb{Z}}{2\mathbb{Z}}$ or $\nicefrac{\mathbb{Z}}{3Z}$.
We deduce that the quotient $\nicefrac{\Omega_{j-1}}{\Lambda_{j-1}}$
has only composition factors $\nicefrac{\mathbb{Z}}{2\mathbb{Z}}$
and\textbackslash{}or $\nicefrac{\mathbb{Z}}{3\mathbb{Z}}$. Let 
\[
\Omega_{j-1}=N_{0}\trianglerighteq N_{1}\trianglerighteq\ldots\trianglerighteq N_{r}=\Lambda_{j-1}
\]
be a normal series with quotients $\nicefrac{\mathbb{Z}}{2\mathbb{Z}}$
and\textbackslash{}or $\nicefrac{\mathbb{Z}}{3\mathbb{Z}}$. Note
that the index $\left[H_{j}\colon\pi_{j}\left(N_{1}\right)\cap H_{j}\right]$
is at most $3$, but as $H_{j}$ is a direct product of non-abelian
simple groups, it has no proper subgroups of index\footnote{To be sure, the reason that $H=T_{1}\times\ldots\times T_{m}$ with
$T_{1},\ldots,T_{m}$ finite non-abelian simple groups has no subgroups
of index $2$ or $3$ is that the normal subgroups of $H$ are $B_{1}\times\ldots\times B_{m}$
where $B_{i}\in\left\{ 1,T_{i}\right\} $ for every $i$ (this is
standard: if $N\trianglelefteq H$ and $N\cap T_{1}\ne1$, then $1\ne\left[N,T_{1}\right]\trianglelefteq T_{1}$,
and so $\left[N,T_{1}\right]=T_{1}$ and $N\ge T_{1}$). In particular,
since the smallest non-abelian simple group is $\mathrm{Alt}\left(5\right)$,
any proper normal subgroup of $H$ is of index at least $60$. If
$K\le H$ has index $2$ or $3$, then its core, $\cap_{h\in H}hKh^{-1}$,
is proper normal subgroup of index at most $6$, which is impossible.} $\le3$, hence $\pi_{j}\left(N_{1}\right)\ge H_{j}$. By induction,
the same argument shows that $\pi_{j}\left(N_{\ell}\right)\ge H_{j}$
for every $\ell$, and, in particular, $\pi_{j}\left(\Lambda_{j-1}\right)\ge H_{j}$.
\end{proof}
\begin{lem}
\label{lem:Lambda transitive on X*(p_k)}For $j=5,\ldots,k$ (so $p_{j}\ge13$),
$\Lambda_{j-1}$ acts transitively on $X^{*}\left(p_{j}\right)$.
\end{lem}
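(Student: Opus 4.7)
By Lemma~\ref{lem:Lambda transitive on Y*}, $\Lambda_{j-1}$ is already transitive on $Y^{*}(p_{j})$, so every $\Lambda_{j-1}$-orbit on $X^{*}(p_{j})$ meets every block. Since the fiber of the projection $X^{*}(p_{j})\to Y^{*}(p_{j})$ is a Klein four-group of sign changes, transitivity on $X^{*}(p_{j})$ reduces to producing, for a single block $B\in Y^{*}(p_{j})$, a subgroup of the setwise stabilizer of $B$ inside the image of $\Lambda_{j-1}$ in $\Gamma_{p_{j}}$ that acts transitively on the four points of $B$. Equivalently, I must exhibit at least two independent nontrivial sign-change elements of that image fixing $B$ setwise (a single one only produces orbits of size $2$ inside the block). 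Following Figure~\ref{fig:line of arguments}, I split into two cases according to $p_{j}\bmod 4$.

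\textbf{Case $p_{j}\equiv 1\,(4)$.} In the spirit of the proof of Theorem~\ref{thm:1 mod 4}, I use solutions containing a parabolic coordinate $\pm 2$. For each $i=1,2,3$ I take the exponent $N_{i}=\operatorname{lcm}\bigl(|\rot_{i}|_{Q_{p_{j}}},\,|\rot_{i}|_{\Gamma_{M_{j-1}}}\bigr)$, so that $\rot_{i}^{N_{i}}$ lies in $\Lambda_{j-1}$ and acts trivially on $Y^{*}(p_{j})$. A $2$-adic analysis of the cycle lengths recorded in Table~\ref{tab:1 mod 4}, combined with the prime ordering of Notation~\ref{notation:primes,kernels} (which guarantees that $v_{2}(|\rot_{i}|_{Q_{p_{j}}})$ is not swamped by the corresponding valuations for the smaller primes $p_{1},\ldots,p_{j-1}$), shows that $\rot_{i}^{N_{i}}$ nonetheless performs a genuine sign change on blocks meeting the parabolic conic $C_{i}(\pm 2)$. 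Block-transitivity of $\Lambda_{j-1}$ then aligns two such sign-change operators on a common block, delivering the required transitive action.

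\textbf{Case $p_{j}\equiv 3\,(4)$.} No parabolic solutions are available, so the parabolic ingredient is replaced by Proposition~\ref{prop:(e,e,*) with order divisible by 4}, which supplies a block of type $(e,e,\ast)$ lying on a $\rot$-cycle whose length is divisible by $4$. The factor of $4$ plays the role that $v_{2}(p-1)\ge 2$ played in the previous case: it is precisely what is needed to carry out the $2$-adic argument in a purely elliptic setting, producing a power of a rotation in $\Lambda_{j-1}$ that is trivial on $Y^{*}(p_{j})$ while flipping signs within $X^{*}(p_{j})$. The rest of the argument proceeds as in Case~1.

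\textbf{Main obstacle.} The technical heart is the $2$-adic bookkeeping of exponents: the chosen $N$ must simultaneously be divisible by $|\rot|_{\Gamma_{M_{j-1}}}$ (so that the resulting element lies in $\Lambda_{j-1}$) and by $|\rot|_{Q_{p_{j}}}$ (so that it is block-trivial), yet not be divisible by $|\rot|_{\Gamma_{p_{j}}}$ (so that it remains nontrivial on $X^{*}(p_{j})$). The hypothesis $j\ge 5$ (whence $p_{j}\ge 13$) together with the careful prime ordering of Notation~\ref{notation:primes,kernels} are calibrated to make such an $N$ exist. Case~2 is strictly harder: the absence of parabolic coordinates forces the analysis onto elliptic-elliptic cycles, which is exactly what Proposition~\ref{prop:(e,e,*) with order divisible by 4} is designed to provide.
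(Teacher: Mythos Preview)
Your overall reduction is fine: by Lemma~\ref{lem:Lambda transitive on Y*} it suffices to realize, inside the image of $\Lambda_{j-1}$ in $\Gamma_{p_{j}}$, two independent sign-flips on a single block. But your mechanism for producing such elements is wrong, and the mistake is exactly in the ``$2$-adic bookkeeping'' you flag as the main obstacle.

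You propose to use the \emph{pure power} $\rot_{i}^{N_{i}}$ with $N_{i}=\operatorname{lcm}\bigl(|\rot_{i}|_{Q_{p_{j}}},\,|\rot_{i}|_{\Gamma_{M_{j-1}}}\bigr)$, and you claim the ordering in Notation~\ref{notation:primes,kernels} prevents $v_{2}\bigl(|\rot_{i}|_{\Gamma_{M_{j-1}}}\bigr)$ from exceeding $v_{2}\bigl(|\rot_{i}|_{Q_{p_{j}}}\bigr)$. That ordering is by the \emph{size} of $|\rot_{i}|_{Q_{p}}$, not by its $2$-adic valuation, and it gives no such control. Concretely, take $p_{j}=13$: here $|\rot_{1}|_{Q_{13}}=546$ has $v_{2}=1$ and $|\rot_{1}|_{\Gamma_{13}}=1092$ has $v_{2}=2$, while among the earlier primes one has $v_{2}\bigl(|\rot_{1}|_{\Gamma_{5}}\bigr)=v_{2}(60)=2$. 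Thus $v_{2}(N_{1})\ge 2$, so $|\rot_{1}|_{\Gamma_{13}}\mid N_{1}$ and $\rot_{1}^{N_{1}}$ is the identity on $X^{*}(13)$. Your element does nothing. The same obstruction recurs for infinitely many $p_{j}$: any earlier prime $p_{\ell}$ with a large power of $2$ in $p_{\ell}\pm 1$ kills the construction for all later $p_{j}$ with small $v_{2}(p_{j}^{2}-1)$.

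The paper avoids this problem entirely by \emph{not} trying to put a power of $\rot_{i}$ into $\Lambda_{j-1}$. Instead it exploits that $\Lambda_{j-1}\trianglelefteq\Gamma$: for any $g\in\Lambda_{j-1}$ and any $\gamma\in\Gamma$ whatsoever, the commutator $g^{-1}\gamma^{-1}g\gamma$ lies in $\Lambda_{j-1}$. One then chooses $\gamma=\rot_{1}^{q}$ with $q$ depending only on $p_{j}$ (for $p_{j}\equiv 1\,(4)$ one takes $q=p_{j}+1$; for $p_{j}\equiv 3\,(4)$ one uses CRT and Proposition~\ref{prop:(e,e,*) with order divisible by 4} to pick $q$) so that, purely inside $X^{*}(p_{j})$, $\rot_{1}^{q}$ flips signs on one carefully chosen block $B$ while fixing pointwise another block $B'$. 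Taking $g\in\Lambda_{j-1}$ with $g(B)=B'$ (block-transitivity) makes the commutator act as the desired sign-flip on $B$. No compatibility with $|\rot_{i}|_{\Gamma_{M_{j-1}}}$ is ever needed, and the role of $j\ge 5$ is only to ensure $p_{j}\ge 13$ so that Proposition~\ref{prop:(e,e,*) with order divisible by 4} applies and the needed pair of blocks exists. This normality/commutator idea is the missing ingredient in your argument.
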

\begin{proof}
Our strategy is to find a triple $\left(x,y,z\right)\in X^{*}\left(p_{j}\right)$
and elements in $\Lambda_{j-1}$ mapping $\left(x,y,z\right)$ to
the other elements in its $4$-block: $\left(x,-y,-z\right)$, $\left(-x,y,-z\right)$
and $\left(-x,-y,z\right)$. Together with the transitivity of $\Lambda_{j-1}$
on $Y^{*}\left(p_{k}\right)$ established in Lemma \ref{lem:Lambda transitive on Y*},
this would complete the proof. As in other places in this paper, we
deal separately with the case $p_{j}\equiv1\left(4\right)$ and the
case $p_{j}\equiv3\left(4\right)$, the argument in the former case
being simpler.\\

\noindent \textbf{Case 1: $p=p_{j}\equiv1\left(4\right)$}

\noindent Take some $x\in\mathbb{F}_{p}$ hyperbolic of maximal order
(namely, the $\rot_{1}$-cycles in $C_{1}\left(x\right)$ are of length
$p-1\ge12$ each). Since $0$ has order $4$, $x\ne0$ and $\left(0,x,ix\right)\in X^{*}\left(p\right)$.
Let $\left(r,s,t\right)\in X^{*}\left(p\right)$ be another solution
with $r$ elliptic. As all $\rot_{1}$-cycles in $C_{1}\left(0\right)$
have length 4 and $\left(p+1\equiv2\mod4\right)$, we get that $\rot_{1}^{~p+1}$
fixes all four elements in $\left[r,s,t\right]$ while mapping $\left(0,x,ix\right)\mapsto\left(0,-x,-ix\right)$.
By Lemma \ref{lem:Lambda transitive on Y*}, there is some $g\in\Lambda_{j-1}$
mapping $\left[0,x,ix\right]\mapsto\left[r,s,t\right]$. The element
$h_{1}=g^{-1}\cdot\rot_{1}^{~-\left(p+1\right)}\cdot g\cdot\rot_{1}^{~p+1}$
is in $\Lambda_{j-1}$ (as $\Lambda_{j-1}\trianglelefteq\Gamma$)
and maps $\left(0,x,ix\right)\mapsto\left(0,-x,-ix\right)$. 

Since $x$ is maximal hyperbolic, its order is $\left(p-1\right)$
which is divisible by $4$. Hence $-x$ is also maximal hyperbolic.
Let now $\left(r',s',t'\right)\in X^{*}\left(p\right)$ be a solution
with $s'$ elliptic. Note that $\left(\frac{p^{2}-1}{4}\equiv0\mod p+1\right)$
while $\left(\frac{p^{2}-1}{4}\equiv\frac{p-1}{2}\mod p-1\right)$.
Thus $\rot_{2}^{~\left(p^{2}-1\right)/4}$ fixes all four elements
in $\left[r',s',t'\right]$ while mapping $\left(0,x,ix\right)\mapsto\left(0,x,-ix\right)$
and $\left(0,-x,-ix\right)\mapsto\left(0,-x,ix\right)$. By Lemma
\ref{lem:Lambda transitive on Y*}, there is some $g'\in\Lambda_{j-1}$
mapping $\left[0,x,ix\right]\mapsto\left[r',s',t'\right]$. The element
$h_{1}=\left(g'\right)^{-1}\cdot\rot_{1}^{~-\left(p^{2}-1\right)/4}\cdot g'\cdot\rot_{1}^{~\left(p^{2}-1\right)/4}$
is in $\Lambda_{j-1}$ and maps $\left(0,x,ix\right)\mapsto\left(0,x,-ix\right)$
and $\left(0,-x,-ix\right)\mapsto\left(0,-x,ix\right)$.\\

\noindent \textbf{Case 2: $p=p_{j}\equiv3\left(4\right)$}

\noindent In Proposition \ref{prop:(e,e,*) with order divisible by 4}
below, we prove there is a solution $\left(x,y,z\right)\in X^{*}\left(p\right)$
with both $x$ and $y$ elliptic of order divisible by $4$. In this
case, $-x$ has the same order as $x$, say this order is $4m$ and
note that $4m|\left(p+1\right)$. Let $\left(r,s,t\right)\in X^{*}\left(p\right)$
be another solution with $r$ hyperbolic. As $p-1\equiv2\left(4\right)$,
there is a number $q$ with $\left(q\equiv2m\mod4m\right)$ and $\left(q\equiv0\mod p-1\right)$.
We get that $\rot_{1}^{~q}$ fixes all four elements in $\left[r,s,t\right]$
while mapping $\left(x,y,z\right)\mapsto\left(x,-y,-z\right)$ and
$\left(-x,-y,z\right)\mapsto\left(-x,y,-z\right)$. By Lemma \ref{lem:Lambda transitive on Y*},
there is some $g\in\Lambda_{j-1}$ mapping $\left[x,y,z\right]\mapsto\left[r,s,t\right]$.
The element $h_{1}=g^{-1}\cdot\rot_{1}^{~-q}\cdot g\cdot\rot_{1}^{~q}$
is in $\Lambda_{j-1}$ and maps $\left(x,y,z\right)\mapsto\left(x,-y,-z\right)$
and $\left(-x,-y,z\right)\mapsto\left(-x,y,-z\right)$. In the same
fashion, we find an element of $\Lambda_{j-1}$ mapping $\left(x,y,z\right)\mapsto\left(-x,y,-z\right)$
and we are done. 
\end{proof}
Modulo Proposition \ref{prop:(e,e,*) with order divisible by 4} which
we prove at the end of this subsection, we can now complete the proofs
of Theorem \ref{thm:strong approximation for square-free composite}
and Corollary \ref{cor:transitive for composite with factors we handle}:
\begin{proof}[Proof of Theorem \ref{thm:strong approximation for square-free composite}]
 We use Notation \ref{notation:primes,kernels}. We need to show
that $\Gamma$ acts transitively on $X^{*}\left(n\right)$. We prove
that $\Gamma$ acts transitively on $X^{*}\left(M_{j}\right)$ for
$j=1,\ldots,k$ (recall that $M_{k}=n$). For $j=4$ we verified by
computer that $\Gamma$ is transitive on $X^{*}\left(2\cdot5\cdot7\cdot11\right)$.
For $j\ge5$, we use induction and assume that $\Gamma$ acts transitively
on $X^{*}\left(M_{j-1}\right)$. From Lemma \ref{lem:Lambda transitive on X*(p_k)}
it follows that $\Gamma$ is transitive on $X^{*}\left(M_{j}\right)$. 
\end{proof}
\medskip{}
We complete the subsection with the proposition we use in the proof
of case 2 in Lemma \ref{lem:Lambda transitive on X*(p_k)}:
\begin{prop}
\label{prop:(e,e,*) with order divisible by 4}For every prime $p\ne3,11$
with $p\equiv3\left(4\right)$, there is a solution $\left(x,y,z\right)\in X^{*}\left(p\right)$
with two coordinates elliptic of order divisible by 4.
\end{prop}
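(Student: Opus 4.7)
My plan is to look for a \emph{diagonal} solution $(x,x,z) \in X^{*}\left(p\right)$. Substituting $y=x$ in the Markoff equation gives $z^{2} - x^{2}z + 2x^{2} = 0$, whose discriminant equals $x^{2}(x^{2}-8)$; for $x\neq 0$ this is a nonzero square in $\mathbb{F}_{p}$ precisely when $x^{2}-8$ is a square. Thus the proposition reduces to producing $x \in \mathbb{F}_{p}$ that is (a) elliptic, i.e.\ $x^{2}-4$ is a non-square, (b) has $x^{2}-8$ a square, and (c) has eigenvalue $\omega_{x} \in H$ of order divisible by $4$.

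Weil's bound (Theorem~\ref{thm:Weil}) applied to the quartic $(X^{2}-4)(X^{2}-8)$ (which has four distinct roots in $\overline{\mathbb{F}_{p}}$) shows that the number of $x \in \mathbb{F}_{p}$ satisfying (a) and (b) is $p/4 + O(\sqrt{p})$. Writing $r = v_{2}(p+1) \ge 2$, condition (c) says $\omega_{x}$ lies outside the subgroup $H_{0} \le H$ of order $(p+1)/2^{r-1}$. When $r \ge 3$ the index $[H : H_{0}] \ge 4$, so at most $(p-7)/8$ elliptic $x \neq 0$ fail (c); subtracting from the Weil count leaves at least $(p+7)/8 - O(\sqrt{p})$ good $x$'s, which is positive for $p$ large enough.

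The case $r = 2$ (i.e.\ $p \equiv 3 \pmod 8$) is more delicate since only half the elliptic $x$ satisfy (c) and crude pigeonhole fails. Here I would detect the condition $\omega_{x} \notin H^{2}$ via a quartic multiplicative character $\chi_{4}$ of $\mathbb{F}_{p^{2}}^{*}$, whose restriction to $H$ is exactly the non-trivial quadratic character of $H$ because $p \equiv 3 \pmod 4$ forces $\chi_{4}(g^{p-1}) = i^{p-1} = -1$ for a generator $g$ of $\mathbb{F}_{p^{2}}^{*}$. Combining the indicator $(1 - \chi_{4}(\omega_{x}))/2$ with the Legendre symbols of $x^{2}-4$ and $x^{2}-8$, and lifting the sum to $\mathbb{F}_{p^{2}}$ via $\omega_{x} = (x + \sqrt{x^{2}-4})/2$, one gets a mixed character sum that is still controlled by Weil's theorem. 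The main term is $p/8$, dominating the error for $p$ large.

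For the finitely many small primes below the threshold where these bounds become effective, the statement is verified by direct computation. For example $(3,3,3) \in X^{*}(7)$ works, since $3$ is elliptic mod $7$ and a straightforward calculation in $\mathbb{F}_{49} = \mathbb{F}_{7}[i]$ shows $\omega_{3}$ has order $8$. The prime $p=11$ is a genuine exception: the only elliptic $x \neq 0$ mod $11$ with $4 \mid |\omega_{x}|$ are $\pm 5$, and one checks directly that each of the (at most four) pairs in $\{\pm 5\}^{2}$ yields a Markoff quadratic in $z$ with non-square discriminant, so no $(x,y,z) \in X^{*}(11)$ with both $x,y$ elliptic of order divisible by $4$ exists. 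The principal obstacle is the $r=2$ case, where condition (c) is not itself a Legendre symbol on $\mathbb{F}_{p}$ and one must argue that the induced character sum on $\mathbb{F}_{p^{2}}$ still falls within the scope of Weil's estimate.
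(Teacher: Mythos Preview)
Your diagonal-solution strategy is genuinely different from the paper's and is essentially sound, but the $r=2$ branch contains a real gap that you yourself flag as ``the principal obstacle'' --- and in fact that obstacle is illusory.

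The paper does not look for diagonal solutions. Instead it fixes once and for all an elliptic $x$ of \emph{maximal} order $p+1$ (so the divisibility-by-$4$ condition on the first coordinate is automatic), and then searches the conic $C_{1}(x)$ for a second coordinate $y$ with the right properties. The crucial device is Lemma~\ref{lem:y+2 square for y elliptic-1}: for elliptic $y=\omega+\omega^{-1}$ with $\omega\in H$, one has $\omega\in H^{2}$ if and only if $y+2$ is a square in $\mathbb{F}_{p}$. Since $\omega\notin H^{2}$ forces $4\mid|\omega|$, the order condition on $y$ reduces to the pair of Legendre conditions ``$y+2$ non-square, $y-2$ square''. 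Parametrising $C_{1}(x)$ by $s\in\mathbb{F}_{p}$ via Lemma~\ref{lem:H} and applying Theorem~\ref{thm:Weil} to two degree-$4$ polynomials gives $N_{(-1,1)}\ge(p-11\sqrt{p})/4$, hence the result for $p>121$.

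Your approach via $(x,x,z)$ is arguably cleaner in that it avoids the conic parametrisation, and your $r\ge 3$ pigeonhole is correct. But your assertion that in the $r=2$ case ``condition~(c) is not itself a Legendre symbol on $\mathbb{F}_{p}$'' is simply false: by the same Lemma~\ref{lem:y+2 square for y elliptic-1}, your $\chi_{4}(\omega_{x})$ equals $\bigl(\tfrac{x+2}{p}\bigr)$ for every elliptic $x$. So there is no need for quartic characters or for ``lifting the sum to $\mathbb{F}_{p^{2}}$'' (which, if it meant applying Weil over $\mathbb{F}_{p^{2}}$, would give a useless error of size $O(p)$). Once you use this identity, your entire argument collapses --- uniformly in $r$ --- to counting $x\in\mathbb{F}_{p}$ with $x+2$ a non-square, $x-2$ a square, and $x^{2}-8$ a square: three Legendre conditions, main term $p/8$, Weil error $O(\sqrt{p})$, and you are done. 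The case split on $v_{2}(p+1)$ is unnecessary.
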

In the proof of Proposition \ref{prop:(e,e,*) with order divisible by 4}
we use notation as in Section \ref{subsec:No-correlation-ellip}.
As $4|\left(p+1\right)$, if $\omega\in H$ is not a square then $4|\left|\omega\right|$.
Thus, it is enough to find a solution $\left(x,y,z\right)\in X^{*}\left(p\right)$
with $x,y$ elliptic and the corresponding $\omega_{x},\omega_{y}$
not squares in $H$.
\begin{lem}
\label{lem:y+2 square for y elliptic-1}Assume $y=\omega+\omega^{-1}$
is elliptic (so $\omega\in H$). Then $\omega$ is a square in $H$
if and only if $y+2$ is a square in $\mathbb{F}_{p}$.
\end{lem}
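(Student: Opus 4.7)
The plan is to use the explicit parametrization of $H$ from Lemma~\ref{lem:H} to reduce squareness in $H$ to squareness in $\mathbb{F}_p$. Write $\omega = \theta + i\eta$ with $\theta,\eta \in \mathbb{F}_p$ and $\theta^2 + \eta^2 = 1$. Since $\omega \in H$, we have $\omega^{-1} = \omega^p = \theta - i\eta$, so $y = \omega + \omega^{-1} = 2\theta$ and $y + 2 = 2(1+\theta)$. Note that because $y$ is elliptic (and in particular $y \neq \pm 2$), we have $\theta \neq \pm 1$, so $\eta \neq 0$ and $y+2 \neq 0$.

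Next, I would look for a square root $\mu = a + ib \in H$ of $\omega$, so that $a,b \in \mathbb{F}_p$ with $a^2 + b^2 = 1$. Expanding $\mu^2 = (a^2 - b^2) + 2ab\,i = \theta + i\eta$ and combining $a^2 - b^2 = \theta$ with $a^2 + b^2 = 1$ forces
\[
a^2 = \frac{1+\theta}{2}, \qquad b^2 = \frac{1-\theta}{2}.
\]
Thus such $\mu$ exists in $H$ if and only if both $(1+\theta)/2$ and $(1-\theta)/2$ are squares in $\mathbb{F}_p$, and the sign of $b$ can then be chosen to enforce $2ab = \eta$ (which is automatic up to sign, since $a^2 b^2 = (1-\theta^2)/4 = \eta^2/4$).

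The key observation is that
\[
\frac{1+\theta}{2} \cdot \frac{1-\theta}{2} \;=\; \frac{1-\theta^2}{4} \;=\; \left(\frac{\eta}{2}\right)^{2}
\]
is always a nonzero square in $\mathbb{F}_p^{*}$, so $(1+\theta)/2$ and $(1-\theta)/2$ have the same quadratic character. Therefore $\omega$ is a square in $H$ precisely when $(1+\theta)/2$ is a square in $\mathbb{F}_p^{*}$; and since $y + 2 = 2(1+\theta) = 4 \cdot (1+\theta)/2$ differs from $(1+\theta)/2$ by the square $4$, this happens if and only if $y+2$ is a square in $\mathbb{F}_p$.

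The only subtle point I anticipate is the sign adjustment in step two: one must check that once $a$ and $b$ are chosen as square roots of $(1\pm\theta)/2$, at least one of the four sign combinations $(\pm a, \pm b)$ satisfies both $2ab = \eta$ and $a^2 + b^2 = 1$ simultaneously, so that the resulting $\mu$ genuinely lies in $H$. This is handled by the identity $(ab)^2 = \eta^2/4$, which forces $ab = \pm \eta/2$, after which flipping the sign of $b$ (if needed) gives the required $\mu \in H$.
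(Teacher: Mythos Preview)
Your proof is correct. The paper takes a different, somewhat slicker route: it observes directly that $y+2=(\omega^{1/2}+\omega^{-1/2})^{2}$, so the question becomes whether $\omega^{1/2}+\omega^{-1/2}$ lies in $\mathbb{F}_{p}$. This is checked via the Frobenius $\alpha\mapsto\alpha^{p}$: since $(\omega^{1/2})^{p}=\omega^{(p+1)/2}\cdot\omega^{-1/2}$, the element $\omega^{1/2}+\omega^{-1/2}$ is Frobenius-fixed exactly when $\omega^{(p+1)/2}=1$, i.e.\ when $\omega^{1/2}\in H$ (with $y+2\ne0$ ruling out the degenerate case). Your argument instead works in the explicit $(\theta,\eta)$-coordinates of $H$ from Lemma~\ref{lem:H} and solves for the square root directly, the key step being that $(1+\theta)/2$ and $(1-\theta)/2$ share the same quadratic character because their product equals $(\eta/2)^{2}$. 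The paper's version is a couple of lines shorter and needs no sign bookkeeping; your version is more hands-on and ties the statement concretely to the parametrization of $H$, which would be convenient if one actually wanted to exhibit the square root.
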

\begin{proof}
Note that $y+2=\omega+\omega^{-1}+2=\left(\omega^{1/2}+\omega^{-1/2}\right)^{2}$.
If $\omega^{1/2}\in H$ then $\omega^{1/2}+\omega^{-1/2}\in\mathbb{F}_{p}$.
On the other hand, if $\omega^{1/2}\notin H$, then $\omega^{\left(p+1\right)/2}=-1$
and so $\omega^{1/2}+\omega^{-1/2}\notin\mathbb{F}_{p}$, because
\[
\left(\omega^{1/2}+\omega^{-1/2}\right)^{p}=\omega^{\left(p+1\right)/2}\omega^{-1/2}+\omega^{-\left(p+1\right)/2}\omega^{1/2}=-\left(\omega^{-1/2}+\omega^{1/2}\right)\ne\left(\omega^{1/2}+\omega^{-1/2}\right)
\]
(the last inequality stems from $\left(\omega^{1/2}+\omega^{-1/2}\right)^{2}=y+2\ne0$). 
\end{proof}

\begin{proof}[Proof of Proposition \ref{prop:(e,e,*) with order divisible by 4}]
 Fix $x\in\mathbb{F}_{p}$ elliptic of maximal order ($p+1$). So
$4|\left|\omega_{x}\right|=p+1$. By Lemma \ref{lem:y+2 square for y elliptic-1},
it is enough to find $y,z\in\mathbb{F}_{p}$ such that $\left(x,y,z\right)\in X^{*}\left(p\right)$
is a solution, $y$ is elliptic and $y+2$ is a non-square. Since
$y$ elliptic means that $y^{2}-4=\left(y+2\right)\left(y-2\right)$
is not a square, we need to find $y,z$ with $\left(x,y,z\right)\in X^{*}\left(p\right)$
and $y+2$ a non-square and $y-2$ a square.

Imitating the notation from Section \ref{subsec:No-correlation-ellip},
assume $x=\omega+\omega^{-1}$ with $\omega\in H$, choose some $A\in\mathbb{F}_{p^{2}}$
for which $A^{p+1}=\frac{x^{2}}{x^{2}-4}$, and let $f_{A}\left(h\right)=Ah+A^{p}h^{-1}$
for $h\in H$. Then, 
\begin{equation}
\left\{ \left(f_{A}\left(h\right),f_{A\omega}\left(h\right)\right)\,\middle|\,h\in H\right\} =\left\{ \left(y,z\right)\,\middle|\,\left(x,y,z\right)\in X^{*}\left(p\right)\right\} .\label{eq:from (y,z) to f(h)-1}
\end{equation}
Recall the parametrization of $H\setminus\left\{ -i\right\} $ by
elements from $\mathbb{F}_{p}$ described in Lemma \ref{lem:H}: $h\left(s\right)=\frac{2s+i\left(1-s^{2}\right)}{1+s^{2}}=\frac{-i\left(s+i\right)}{s-i}$.
Define $g_{1},g_{2}\in\mathbb{F}_{p}\left[s\right]$ as follows:
\begin{eqnarray*}
g_{1}\left(s\right) & \stackrel{\mathrm{def}}{=} & \left(1+s^{2}\right)^{2}\left[f_{A}\left(h\left(s\right)\right)+2\right]=\left(1+s^{2}\right)\left[2s\left(A+A^{p}\right)+\left(1-s^{2}\right)i\left(A-A^{p}\right)+2\left(1+s^{2}\right)\right]\\
g_{2}\left(s\right) & \stackrel{\mathrm{def}}{=} & \left(1+s^{2}\right)^{2}\left[f_{A}\left(h\left(s\right)\right)-2\right]=\left(1+s^{2}\right)\left[2s\left(A+A^{p}\right)+\left(1-s^{2}\right)i\left(A-A^{p}\right)-2\left(1+s^{2}\right)\right].
\end{eqnarray*}
It is not hard to see that $g_{j}\left(s\right)\in\mathbb{F}_{p}\left[s\right]$:
indeed, $A+A^{p},i\left(A-A^{p}\right)\in\mathbb{F}_{p}$. We now
show that for large enough $p$, there is some $s\in\mathbb{F}_{p}$
for which 
\begin{equation}
\left(\frac{g_{1}\left(s\right)}{p}\right)=-1~~\mathrm{and}~~\left(\frac{g_{2}\left(s\right)}{p}\right)=1.\label{eq:-1,1}
\end{equation}
Denote by $N_{\left(-1,1\right)}$ the number of $s\in\mathbb{F}_{p}$
for which (\ref{eq:-1,1}) holds. Our goal is to show that for large
enough $p$, $N_{\left(-1,1\right)}>0$. As in the proof of Proposition
\ref{prop:high-order =00003D=00003D> no self-correlation}, $g_{1}$
and $g_{2}$ have no zeros inside $\mathbb{F}_{p}$ because there
are no solutions in $X^{*}\left(p\right)$ involving $\pm2$. So
\begin{equation}
N_{(-1,1)}=\frac{1}{4}\sum_{s\in\mathbb{F}_{p}}\left(1-\left(\frac{g_{1}\left(s\right)}{p}\right)\right)\left(1+\left(\frac{g_{2}\left(s\right)}{p}\right)\right).\label{eq:long formula for n(-1,1)}
\end{equation}
For $\emptyset\ne B\subseteq\left\{ 1,2\right\} $, let $M_{B}\stackrel{\mathrm{def}}{=}\sum_{s\in\mathbb{F}_{p}}\left(\frac{\prod_{j\in B}g_{j}\left(s\right)}{p}\right)$
and then (\ref{eq:long formula for n(-1,1)}) becomes
\begin{equation}
N_{\left(-1,1\right)}=\frac{1}{4}\left(p-M_{\left\{ 1\right\} }+M_{\left\{ 2\right\} }-M_{\left\{ 1,2\right\} }\right).\label{eq:formula for (-1,1)}
\end{equation}
Note that 
\[
g_{1}\left(s\right)g_{2}\left(s\right)=\left(1+s^{2}\right)^{4}\left[f_{A}\left(h\left(s\right)\right)^{2}-4\right]=\left(1+s^{2}\right)^{2}g_{A}\left(s\right)
\]
where $g_{A}\left(s\right)$ is defined as in Equation (\ref{eq:g in terms of h})
in Section \ref{subsec:No-correlation-ellip} for $m=1$. As our analysis
in Section \ref{subsec:No-correlation-ellip} shows, all roots of
$g_{A}$, except for $\pm i$, have multiplicity $1$. Thus, none
of $g_{1}$, $g_{2}$ or $g_{1}g_{2}$ is a square in $\overline{\mathbb{F}_{p}}\left[x\right]$.
Now $g_{1}$ and $g_{2}$ have each at most 4 distinct roots and by
Theorem \ref{thm:Weil}, $\left|M_{\left\{ 1\right\} }\right|,\left|M_{\left\lfloor 2\right\rfloor }\right|\le3\sqrt{p}$.
Their product $g_{1}g_{2}$ has at most $6$ distinct roots, hence
by Theorem \ref{thm:Weil} $\left|M_{\left\{ 1,2\right\} }\right|\le5\sqrt{p}$.
From (\ref{eq:formula for (-1,1)}) we get
\begin{eqnarray*}
N_{\left(-1,1\right)} & \ge & \frac{1}{4}\left(p-2\cdot3\sqrt{p}-5\sqrt{p}\right)=\frac{p-11\sqrt{p}}{4}.
\end{eqnarray*}
So for $p>11^{2}=121$ we have $N_{\left(-1,1\right)}>0$ and we are
done.\\
For all primes $p$ with $p\equiv3\left(4\right)$, $p\le121$ and
$p\ne3,11$, we verified by a computer there is a solution $\left(x,y,z\right)\in X^{*}\left(p\right)$
with $x,y$ elliptic and of order divisible by $4$. For example,
one can take $\left(3,3,3\right)\in X^{*}\left(7\right)$, $\left(6,6,8\right)\in X^{*}\left(19\right)$,
$\left(3,3,3\right)\in X^{*}\left(23\right)$ and $\left(4,4,9\right)\in X^{*}\left(31\right)$.
\end{proof}

\subsection{Transitivity without the classification\label{subsec:transitivity on Y*(n) without CFSG}}

In this section we prove Theorem \ref{thm:if primitive then transitive on product}
concerning the transitivity of $\Gamma$ in square free composite
moduli without relying on the CFSG. We are going to use some strong
results from the theory of permutation groups, mostly revolving around
O'Nan-Scott theorem. While strong, the proofs of these results are
completely contained in the book \cite{dixon1996permutation} and
are not more than a few-page-long each. We stress that if all primes
in the decomposition of $n$ are $2$ or $\left(1\mod4\right)$, then
already the proof in the previous sections does not rely on the CFSG. 

More concretely, let $n=p_{1}\cdots p_{k}$ be a product of distinct
primes, and we assume that $Q_{p_{j}}$ is a primitive permutation
group in its action on $Y^{*}\left(p_{j}\right)$ for every $j=1,\ldots,k$.
Our goal is to show then that $\Gamma$ acts transitively on $X^{*}\left(n\right)$.
It is enough to prove Lemma \ref{lem:primitive =00003D=00003D> socle contained in kernel}
above, as we already showed in Section \ref{subsec:Transitivity-on-X*(n)}
how it yields the conclusion we seek. Throughout this subsection we
assume Notation \ref{notation:primes,kernels}.

The CFSG-free proof of Lemma \ref{lem:primitive =00003D=00003D> socle contained in kernel}
uses the important concept of the socle:
\begin{defn}
\label{def:socle}A minimal normal subgroup of a non-trivial group
$G$ is a normal subgroup $K\ne1$ of $G$ which does not contain
properly any other non-trivial normal subgroup of $G$. The \textbf{socle}
of $G$, denoted \emph{}\marginpar{\emph{$\protect\soc\left(G\right)$}}$\soc\left(G\right)$,
is the subgroup generated by the set of all minimal normal subgroups
of $G$. Note that $\soc\left(G\right)$ is generated by normal subgroups
of $G$ and thus $\soc\left(G\right)\trianglelefteqslant G$.
\end{defn}
For example, if $m\ge5$ then $\soc\left(\mathrm{Sym}\left(m\right)\right)=\soc\left(\mathrm{Alt}\left(m\right)\right)=\mathrm{Alt}\left(m\right)$.
In contrast, $\soc\left(\mathrm{Sym}\left(4\right)\right)=\soc\left(\mathrm{Alt}\left(4\right)\right)=\left\{ 1,\left(12\right)\left(34\right),\left(13\right)\left(24\right),\left(14\right)\left(23\right)\right\} $. 
\begin{thm}[{See \cite[Theorems 4.3B, Corollary 4.3B and Theorem 4.7A]{dixon1996permutation}}]
\label{thm:socle of primitive} Let $G\le\mathrm{Sym}\left(n\right)$
be a primitive subgroup. Then exactly one of the following holds:
\begin{enumerate}
\item For some prime $p$ and some integer $d$, the group $G$ is permutation
isomorphic\footnote{Two permutation groups are permutation isomorphic if they are the
same permutation groups except for, possibly, the labeling of the
points in the sets they act on.} to a subgroup of the affine group $\mathrm{Aff}\left(p,d\right)$
acting on $\mathbb{F}_{p}^{~d}$, so, in particular, $n=p^{d}$. In
this case, $\soc\left(G\right)$ is a regular\footnote{A permutation group $H\le\mathrm{Sym}\left(n\right)$ is called \emph{regular}
if it is sharply transitive. Namely, it is transitive and free. In
other words, it is transitive and of order $n$. The name originates
from the observation that such subgroups are obtained as the (left
or right) regular representation of order-$n$ groups.} elementary abelian subgroup of order $p^{d}$.
\item $\soc\left(G\right)=K_{1}\times K_{2}$ where $K_{1},K_{2}\trianglelefteqslant G$
are minimal normal subgroups of $G$, which are regular, non-abelian
and permutation isomorphic to each other. Moreover\footnote{For $G$ a group and $K\le G$ a subgroup, $C_{G}\left(K\right)=\left\{ g\in G\,\middle|\,gk=kg~\forall k\in K\right\} $
is the centralizer of $K$ in $G$.}, $C_{G}\left(K_{1}\right)=K_{2}$ and $C_{G}\left(K_{2}\right)=K_{1}$.
In addition, $K_{1}\cong K_{2}\cong T^{m}$ for some finite simple
non-abelian group $T$ and some $m\in\mathbb{Z}_{\ge1}$.
\item $\soc\left(G\right)$ is a minimal normal subgroup of $G$. Moreover,
$C_{G}\left(\soc\left(G\right)\right)=1$ and $\soc\left(G\right)\cong T^{m}$
for some finite simple non-abelian group $T$ and some $m\in\mathbb{Z}_{\ge1}$.
\end{enumerate}
\end{thm}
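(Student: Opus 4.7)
My plan is to follow the standard structural approach using two recurring facts from elementary permutation-group theory: (i) a nontrivial normal subgroup of a primitive group is necessarily transitive (otherwise its orbits form a nontrivial block system), and (ii) the centralizer in $\mathrm{Sym}(\Omega)$ of a transitive subgroup is semiregular, so in particular if it is transitive then it is regular. I would also use the standard fact that a minimal normal subgroup of a finite group is characteristically simple, hence isomorphic to $T^m$ for some simple group $T$ and some $m\ge 1$. These three ingredients will drive the whole trichotomy.

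I would start by picking a minimal normal subgroup $K$ of $G$ and splitting on whether the associated simple group $T$ is abelian. If $T$ is abelian then $K$ is an elementary abelian $p$-group, transitive by (i), and the point stabilizer $G_\alpha$ is core-free (by primitivity). Since $K\cap G_\alpha$ is centralized by $K$ (abelian) and normalized by $G_\alpha$, it is normal in $G = K G_\alpha$, hence trivial; so $K$ is regular of order $p^d = n$. Identifying $\Omega$ with $K \cong \mathbb{F}_p^{\,d}$, the stabilizer $G_\alpha$ acts on $K$ by conjugation, faithfully (again by core-freeness), yielding an embedding $G \hookrightarrow K \rtimes \mathrm{Aut}(K) = \mathrm{Aff}(p,d)$. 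In this case one checks that $K$ is the unique minimal normal subgroup (any other would centralize $K$, but $C_G(K)$ contains only $K$ itself by the regularity), so $\mathrm{Soc}(G)=K$, giving Case 1.

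Now assume $T$ is non-abelian, so $Z(K)=1$ and therefore $K\cap C_G(K)=1$ and $K\times C_G(K)\le G$. By (ii) and the transitivity of $K$, $C_G(K)$ is semiregular. If $C_G(K)=1$, any other minimal normal subgroup $L$ would satisfy $[L,K]\le L\cap K=1$, forcing $L\le C_G(K)=1$; hence $K$ is the unique minimal normal subgroup and equals $\mathrm{Soc}(G)$, which is Case 3. If $C_G(K)\ne 1$, take a minimal normal subgroup $K_2$ of $G$ contained in $C_G(K)$; it is transitive by (i) and semiregular by (ii), hence regular, so $|K_2|=n$, and by symmetry (since $K\le C_G(K_2)$ and the same argument applies) $K$ is regular too and $|K|=n$. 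The equality $C_G(K)=K_2$ follows from $|G_\alpha \cdot K_2|\ge |G|$ considerations and the semiregularity of $C_G(K)$; and the isomorphism class forces $K\cong K_2\cong T^m$. Any further minimal normal subgroup $L$ would have to lie in $C_G(K)=K_2$, contradicting minimality unless $L=K_2$, so $\mathrm{Soc}(G)=K\times K_2$, giving Case 2. Finally, one exhibits the permutation isomorphism between $K$ and $K_2$ via the map $k\mapsto c$ where, for a fixed base point $\alpha$, $k\cdot\alpha = c^{-1}\cdot\alpha$; checking this respects the conjugation-induced $G_\alpha$-actions finishes the permutation-isomorphism claim.

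The main obstacle, I expect, is the bookkeeping in Case 2: verifying that the two regular minimal normal subgroups are permutation isomorphic (not merely abstractly isomorphic) and that $C_G(K_1)$ is \emph{exactly} $K_2$ rather than something strictly larger. This requires carefully exploiting that a regular normal subgroup $K$ identifies $\Omega$ with $K$ in a way that conjugates the centralizer $C_G(K)$ onto the left-regular copy opposite to the right-regular action of $K$, an elementary but fiddly computation. The uniqueness in the trichotomy is then immediate, because the three cases are separated by invariants of $\mathrm{Soc}(G)$ (abelian vs. non-abelian; having one vs. two minimal normal summands).
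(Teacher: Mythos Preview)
The paper does not give its own proof of this theorem: it is quoted as a known structural result, with an explicit citation to Dixon--Mortimer (Theorems~4.3B, Corollary~4.3B, and Theorem~4.7A), and is used only as a black box in Section~\ref{subsec:transitivity on Y*(n) without CFSG}. So there is nothing in the paper to compare your argument against.

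That said, your outline is correct and is essentially the standard proof one finds in Dixon--Mortimer. The three ingredients you isolate (normal subgroups of primitive groups are transitive; centralizers of transitive groups are semiregular; minimal normal subgroups are characteristically simple, hence $T^m$) are exactly the right ones, and your case split on whether $T$ is abelian is the usual route. Two small comments: first, your justification that $C_G(K)=K_2$ in Case~2 is cleaner than you make it sound --- since $K$ is regular, $C_{\mathrm{Sym}(\Omega)}(K)$ is the opposite regular copy of $K$, of order $n=|K_2|$, so the inclusion $K_2\le C_G(K)\le C_{\mathrm{Sym}(\Omega)}(K)$ forces equality and simultaneously gives the abstract isomorphism $K_2\cong K$; no ``$|G_\alpha\cdot K_2|\ge |G|$'' considerations are needed. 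Second, in Case~1 your claim that $C_G(K)=K$ (used to rule out a second minimal normal subgroup) deserves one more line: for an abelian regular $K$ the left and right regular representations coincide, so $C_{\mathrm{Sym}(\Omega)}(K)=K$, whence $C_G(K)=K$. With these two clarifications your sketch is complete and matches the cited source.
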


\begin{thm}[{See \cite[Theorem 1.6A]{dixon1996permutation}}]
\label{thm:normal subgp of primitive is transitive} If $G\le\mathrm{Sym}\left(n\right)$
is a primitive permutation group and $1\ne H\trianglelefteqslant G$
is a non-trivial normal subgroup, then $H$ is transitive.
\end{thm}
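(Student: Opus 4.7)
The plan is to use the orbit-partition of $H$ as a candidate block system of $G$, and then invoke primitivity to force this partition to be trivial.

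First I would recall that since $G \le \mathrm{Sym}(n)$ is primitive, it is, in particular, transitive on $\Omega = \{1,\ldots,n\}$ (this is part of the usual convention for primitivity; if $n \ge 2$ it cannot fix the whole set). Next, consider the partition ${\cal O}$ of $\Omega$ into orbits of the subgroup $H$. The key normality observation is that for every $g \in G$ and every $H$-orbit $Hx \subseteq \Omega$, one has
\[
g \cdot (Hx) \;=\; (gHg^{-1}) \cdot gx \;=\; H \cdot gx,
\]
using $H \trianglelefteq G$. Thus $g$ sends $H$-orbits to $H$-orbits, so $G$ permutes the parts of ${\cal O}$. Combined with the transitivity of $G$, this shows that ${\cal O}$ is a $G$-invariant partition in which $G$ acts transitively on the parts; in particular, all parts have the same cardinality and ${\cal O}$ is a block system for $G$ in the usual sense.

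Now I would apply primitivity. By definition, $G$ preserves no non-trivial block system on $\Omega$, so ${\cal O}$ must be one of the two trivial partitions: either every part is a singleton, or there is a single part equal to all of $\Omega$. In the first case, every $H$-orbit is a singleton, meaning that $H$ fixes every point of $\Omega$; but $H \le G \le \mathrm{Sym}(\Omega)$ acts faithfully on $\Omega$ by definition of a subgroup of the symmetric group, so this forces $H = 1$, contradicting the assumption $H \ne 1$. Hence we are in the second case: ${\cal O} = \{\Omega\}$, i.e., $H$ has a single orbit on $\Omega$, which is exactly the statement that $H$ is transitive.

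There is no real obstacle here; the whole argument is a two-line consequence of normality plus the definition of primitivity. The only point that might warrant a half-sentence of care in the write-up is the distinction between ``$G$-invariant partition'' and ``block system,'' which is resolved by the transitivity of $G$ that is built into primitivity; and the faithfulness of the action, which is automatic from the hypothesis $G \le \mathrm{Sym}(n)$.
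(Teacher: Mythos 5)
Your argument is correct and is exactly the standard proof of this fact; the paper itself does not prove the statement but cites it from Dixon--Mortimer (Theorem 1.6A), where the same argument appears: the orbits of a normal subgroup form a $G$-invariant block system, and primitivity forces it to be either all singletons (impossible since $H\ne 1$ acts faithfully) or the single block $\Omega$. Nothing is missing.
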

\begin{cor}
\label{cor:socle is a product of NAFSGs}If $p\ge5$ is prime and
$Q_{p}$ is primitive, then \marginpar{$\protect\soc\left(p\right)$}$\soc\left(p\right)\stackrel{\mathrm{def}}{=}\soc\left(Q_{p}\right)$
acts transitively on $Y^{*}\left(p\right)$ and is a direct product
of non-abelian simple groups.
\end{cor}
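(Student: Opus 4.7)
The plan is to read off both claims directly from Theorems~\ref{thm:socle of primitive} and~\ref{thm:normal subgp of primitive is transitive}. Since $p\ge 5$, $|Y^*(p)|\ge 5$, so $Q_p\ne 1$ and hence $\soc(Q_p)\ne 1$. By Theorem~\ref{thm:normal subgp of primitive is transitive} every nontrivial normal subgroup of a primitive permutation group is transitive; in particular $\soc(Q_p)$ is transitive on $Y^*(p)$. The structural assertion then follows once I rule out Case~(1) of Theorem~\ref{thm:socle of primitive}, since in Cases~(2) and~(3) the socle is of the form $T^m$ or $T^m\times T^m$ for a finite non-abelian simple group $T$, i.e.\ a direct product of non-abelian simple groups.

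Case~(1) would force $|Y^*(p)|=q^d$ for some prime $q$. Since $p\mid|Y^*(p)|=\frac{p(p\pm 3)}{4}$, one must have $q=p$, and therefore $\frac{p\pm 3}{4}=p^{d-1}$. For $p\ge 5$ the inequality $\frac{p\pm 3}{4}<p$ forces $d=1$, hence $p\pm 3=4$, whose only prime solution is $p=7$ (with the minus sign). In every other case $p\ge 5$, $|Y^*(p)|$ is not a prime power and Case~(1) is excluded purely on cardinality grounds, so $\soc(Q_p)$ falls into Case~(2) or~(3) as required.

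The sole obstacle is therefore $p=7$, where $|Y^*(7)|=7$ and Case~(1) would embed $Q_7$ in $\mathrm{AGL}(1,7)$. I would eliminate this via the cycle structure of $\rot_1$: by Lemma~\ref{lem:rotations a la BGS p=00003D3(4)} and Table~\ref{tab:3 mod 4}, for $p=7$ there is exactly one hyperbolic $\pm x$, contributing a single $3$-cycle on the conic section of size $\tfrac{p-1}{2}=3$, and exactly one elliptic $\pm x$, contributing a single $4$-cycle on the conic section of size $\tfrac{p+1}{2}=4$. Thus $\rot_1$ acts on $Y^*(7)$ as a fixed-point-free permutation of cycle type $3+4$. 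However, every fixed-point-free element of $\mathrm{AGL}(1,7)$ is a nontrivial translation, hence a single $7$-cycle, a contradiction. This rules out Case~(1) also for $p=7$, and completes the proof.
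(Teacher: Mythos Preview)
Your proof is correct and follows essentially the same route as the paper: transitivity via Theorem~\ref{thm:normal subgp of primitive is transitive}, and the structural claim by excluding Case~(1) of Theorem~\ref{thm:socle of primitive}. You were in fact more careful than the paper's primary argument: the paper asserts that $|Y^*(p)|=\tfrac{p(p\pm3)}{4}$ is never a prime power, which is false precisely for $p=7$, the case you isolated. The paper does cover this gap via its parenthetical alternative (that $\mathrm{Aff}(q,d)$ has no non-identity element fixing more than half the points, whereas a suitable power of $\rot_1$ does), which works uniformly for all $p\ge 5$. Your handling of $p=7$ via the cycle type $3+4$ of $\rot_1$ itself, and the observation that fixed-point-free elements of $\mathrm{AGL}(1,7)$ are $7$-cycles, is an equally valid and pleasantly concrete substitute.
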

\begin{proof}
Transitivity follows from Theorem \ref{thm:normal subgp of primitive is transitive}
and the fact that the socle is a normal subgroup. Case (1) of Theorem
\ref{thm:socle of primitive} is ruled out because $\left|Y^{*}\left(p\right)\right|=\frac{p\left(p\pm3\right)}{4}$
is not a prime power (or, alternatively, because $\mathrm{Aff}\left(p,d\right)$
has no non-identity elements fixing more than half of the points,
such as $\rot_{1}^{p\left(p+1\right)/2}\in Q_{p}$). So either $Q_{p}$
falls into case (2) or it falls into case (3).
\end{proof}
We also use the following result giving strong limitations on primitive
groups:
\begin{thm}[{See \cite[Theorems 5.3A and 5.5B]{dixon1996permutation}}]
\label{thm:bounds on permutation groups} Let $G\lvertneqq\mathrm{Sym}\left(n\right)$,
$G\ne\mathrm{Alt}\left(n\right)$, be a primitive permutation group. 
\begin{enumerate}
\item If $G$ is not $2$-transitive then $\left|G\right|<\exp\left\{ 4\sqrt{n}\left(\ln n\right)^{2}\right\} $.
\item If $n\ge216$ and $G$ is $2$-transitive and contains a section\footnote{A \emph{section} of a group is some quotient of a subgroup.}
isomorphic to $\mathrm{Alt}\left(k\right)$, then $k<6\ln n$.
\end{enumerate}
\end{thm}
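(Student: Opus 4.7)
The plan is to treat the two parts separately, using classical, CFSG-free permutation group techniques that descend from Bochert, Wielandt, and Babai.

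For part (1) my main tool would be the \emph{minimal degree} $\mu(G) := \min\{|\mathrm{supp}(g)| : 1 \neq g \in G\}$, i.e.~the smallest number of points moved by a non-identity element. The first step is to show $\mu(G) \geq c\sqrt{n}/\log n$ whenever $G$ is primitive but not $2$-transitive. This is essentially Bochert's inequality together with a Wielandt-type refinement: if $G$ is not $2$-transitive, then a non-diagonal orbital yields a nontrivial $G$-invariant relation on $\Omega$, and primitivity forces its orbital graph to be connected and non-bipartite; a combinatorial lemma on automorphism groups of such graphs then yields the required lower bound on $\mu$. The second step is to convert this into an upper bound on $|G|$ via Babai's iterated pointwise-stabilizer trick: choosing a base $\alpha_1,\dots,\alpha_r$ greedily, one shows $G_{\alpha_1,\dots,\alpha_r} = 1$ as soon as $r$ exceeds roughly $(n/\mu)\log n$, since each new point must kill at least a constant fraction of the surviving pointwise stabilizer. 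Orbit--stabilizer then gives $|G| \leq n^r$, and feeding in the bound on $\mu$ yields $|G| \leq \exp(4\sqrt{n}(\log n)^2)$ after optimization.

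For part (2) I would invoke Burnside's classical (pre-CFSG) theorem on $2$-transitive groups: the socle of a $2$-transitive $G$ is either elementary abelian and regular, or a non-abelian simple group $T$ acting primitively on $\Omega$. In the affine case $n = p^d$ and $G \leq \mathrm{AGL}(d,p)$, so every non-abelian composition factor of $G$ is a composition factor of $\mathrm{GL}(d,p)$; a section $A_k$ therefore embeds into $\mathrm{GL}(d,p)$, forcing $k \leq O(d) = O(\log n)$. In the almost-simple case, any section $A_k$ of $G$ is a section either of $\mathrm{Out}(T)$ (which is solvable of small order) or of $T$ itself; but $T$ acts faithfully and transitively on $n$ points, so $|T| \leq n \cdot |T_\alpha|$ with $|T_\alpha|$ bounded via the minimal-degree arguments from part (1), and then $|A_k| = k!/2 \leq |T|$ gives $k \leq O(\log n)$ by Stirling. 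Tracking the constants carefully in both cases yields the stated $k < 6\ln n$ for $n \geq 216$.

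The principal obstacle, I expect, is Bochert's minimal-degree inequality underlying part (1): translating ``primitive and not $2$-transitive'' into a genuinely quantitative lower bound on $\mu(G)$ is a delicate combinatorial analysis of orbital graphs rather than a structural group-theoretic argument, and in the absence of CFSG this is the unique ingredient that cannot be shortened significantly. A secondary difficulty is extracting the sharp numerical constants $4$ and $6$; the rough exponential bounds follow from the outline above with a few pages of work, but pinning down the precise constants requires the careful bookkeeping carried out in \cite{dixon1996permutation}.
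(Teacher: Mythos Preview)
The paper does not prove this theorem at all: it is quoted as a black box from \cite[Theorems 5.3A and 5.5B]{dixon1996permutation}, with no proof given in the paper. So there is nothing to compare your proposal against here.

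That said, a brief comment on your sketch. Your outline for part~(1) is indeed the Babai--Wielandt approach that Dixon--Mortimer present: a lower bound on the minimal degree of a primitive non-$2$-transitive group, converted via a greedy base argument into an upper bound on $|G|$. Your outline for part~(2), however, has a genuine gap. In the almost-simple case you propose to bound $|T_\alpha|$ ``via the minimal-degree arguments from part~(1)'', but part~(1) applies only to primitive groups that are \emph{not} $2$-transitive, whereas here $G$ (and possibly $T$) \emph{is} $2$-transitive; the point stabilizer $T_\alpha$ is transitive on the remaining $n-1$ points but need not be primitive, so the hypotheses of part~(1) are simply not available. The actual argument in Dixon--Mortimer for Theorem~5.5B is different and does not feed part~(1) back into part~(2) in this way; it proceeds through a separate order bound for $2$-transitive groups (of Pyber--Wielandt type) and a more careful analysis of where an $A_k$ section can sit. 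If you want to reconstruct a CFSG-free proof of part~(2), you should look up that argument directly rather than trying to bootstrap from part~(1).
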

\begin{lem}
\label{lem:Q_p primitive than socle contained in image of kernel of Q_q}Let
$p$ and $q$ be distinct primes with $Q_{p}$ and $Q_{q}$ primitive,
and such that $p$ precedes $q$ in the order defined in Notation
\ref{notation:primes,kernels}. Then $Q_{pq}\ge1\times\soc\left(q\right)$
(sitting inside $\mathrm{Sym}\left(Y^{*}\left(p\right)\right)\times\mathrm{Sym}\left(Y^{*}\left(q\right)\right)$).
\end{lem}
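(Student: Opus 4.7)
The plan is to study the kernel $K = \ker\bigl(Q_{pq} \twoheadrightarrow Q_p\bigr)$ of the projection onto the first factor of $Y^{*}(pq) \cong Y^{*}(p) \times Y^{*}(q)$. Elements of $K$ act trivially on $Y^{*}(p)$, so $K \subseteq 1 \times \mathrm{Sym}(Y^{*}(q))$; write $K = 1 \times N$. Since $K \trianglelefteq Q_{pq}$ and the second projection $Q_{pq} \twoheadrightarrow Q_q$ is surjective, $N$ is normal in $Q_q$, and the lemma reduces to showing $N \supseteq \soc(q)$.

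First I would establish $N \neq 1$. By the ordering of Notation \ref{notation:primes,kernels}, together with the standing assumption that the four base primes $2,5,7,11$ come first, the inequality $|\rot_1|_{Q_p} < |\rot_1|_{Q_q}$ is strict in every pair $(p,q)$ in which this lemma is actually invoked (the only near-tie among small primes, namely $\{5,11\}$, is absorbed into the base case). The element $\sigma := \rot_1^{|\rot_1|_{Q_p}}$ is then trivial in $Q_p$ but non-trivial in $Q_q$, yielding $1 \neq \sigma \in K$ and hence $N \neq 1$. Since $Q_q$ is primitive, Theorem \ref{thm:normal subgp of primitive is transitive} gives that $N$ is transitive on $Y^{*}(q)$, and Theorem \ref{thm:socle of primitive} describes $\soc(q)$. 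Case (1) is ruled out as in Corollary \ref{cor:socle is a product of NAFSGs}: the degree $|Y^{*}(q)| = q(q\pm 3)/4$ is not a prime power. In Case (3), the socle is the unique minimal normal subgroup of $Q_q$, and since any non-trivial normal subgroup contains at least one minimal normal subgroup, $N \supseteq \soc(q)$ follows at once.

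The main obstacle is Case (2), where $\soc(q) = K_1 \times K_2$ is the internal product of the two minimal normal subgroups of $Q_q$; both $K_1,K_2$ are regular, non-abelian, and satisfy $C_{Q_q}(K_i) = K_{3-i}$. After swapping indices if necessary, one of them, say $K_1$, is contained in $N$. I would show $K_2 \le N$ by contradiction: if $K_2 \not\le N$, then $N \cap K_2 = 1$ by minimality of $K_2$, so $[N,K_2] \le N \cap K_2 = 1$ and hence $N \le C_{Q_q}(K_2) = K_1$, forcing $N = K_1$ to act \emph{regularly} on $Y^{*}(q)$. To contradict regularity it suffices to exhibit a non-identity element of $N$ with a fixed point. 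The element $\sigma$ constructed above fixes precisely the points of $Y^{*}(q)$ lying in $\rot_1$-cycles (inside $Q_q$) whose length divides $|\rot_1|_{Q_p}$. Tables \ref{tab:1 mod 4} and \ref{tab:3 mod 4} guarantee a cycle of $\rot_1 \in Q_q$ of length $\ell$ whenever $\ell > 1$ divides $(q-1)/2$ or $(q+1)/2$ (respecting the parity restrictions in the elliptic case). A direct case check on $p$ shows that $|\rot_1|_{Q_p}$ always shares such an $\ell$ with $(q^2-1)/2$: for $p=2$ take $\ell=3$, which divides $q^2-1$ since $q\neq 3$; for $p\ge 5$ the prime $\ell = 2$ always divides $|\rot_1|_{Q_p}$ and yields a $\rot_1$-cycle of length $2$ in $Q_q$ when $q\equiv 1\,(4)$, while if $q\equiv 3\,(4)$ (so length-$2$ cycles do not occur) one falls back to $\ell=3$ as above. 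In every case $\sigma$ admits a fixed point on $Y^{*}(q)$, contradicting regularity of $K_1$ and completing Case (2).
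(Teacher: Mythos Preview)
Your dismissal of the equal-order case $o_p = o_q$ is a genuine gap. The lemma as stated allows $p$ to precede $q$ with $|\rot_1|_{Q_p} = |\rot_1|_{Q_q}$ (Notation~\ref{notation:primes,kernels} breaks ties by size, it does not forbid them), and the pair $(11,5)$ is exactly such a tie: $\frac{11^2-1}{4} = 30 = \frac{5(5^2-1)}{4}$. Your claim that this is ``absorbed into the base case'' does not hold up: the base case in the paper is the computer verification that $\Gamma$ is transitive on $X^*(2\cdot 5\cdot 7\cdot 11)$, which is not the same as establishing $Q_{11\cdot 5} \ge 1\times\soc(5)$. More importantly, you have not ruled out further ties among larger primes; the paper only remarks (in a footnote) that $q=5$ is the unique tie with $p<10^6$, and the Diophantine condition $p^2-1 = q(q^2-1)$ with $p\equiv 3(4)$, $q\equiv 1(4)$ both prime is not obviously finite. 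The paper devotes its entire Case~2 to this situation, using Theorem~\ref{thm:1 mod 4} to get $Q_q \ge \mathrm{Alt}(Y^*(q))$, then arguing via Theorem~\ref{thm:bounds on permutation groups} that $Q_p$ cannot have $\mathrm{Alt}(Y^*(q))$ as a composition factor, and finishing with a normal-series argument. Your proof needs an analogous treatment.

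Where you do treat the strict case $o_p < o_q$, your argument is correct but organized differently from the paper's. For socle Case~(3) you invoke uniqueness of the minimal normal subgroup directly, which is cleaner than the paper's commutator-with-$\soc(q)$ construction. For socle Case~(2) the paper shows $\sigma\notin K_1\cup K_2$ (using that $\rot_1$ has cycles of coprime lengths, so no power lies in a regular subgroup) and then builds $[\sigma,h_1h_2]$ with nontrivial projection to each $K_i$; you instead argue that if $N\ne\soc(q)$ then $N=K_1$ is regular, and exhibit a fixed point of $\sigma$ by finding a small cycle length $\ell\in\{2,3\}$ of $\rot_1\in Q_q$ dividing $o_p$. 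Both routes are valid; yours trades the commutator bookkeeping for a short case check on $\ell$, while the paper's ``coprime cycle lengths'' observation avoids any casework on $p$ and $q\bmod 4$.
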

\begin{proof}
Recall that the primes are sorted by the order of rotation elements.
So if $o_{p}$ ($o_{q}$, respectively) is the order of $\rot_{1}$
in $Q_{p}$ ($Q_{q}$, respectively) then $o_{p}\le o_{q}$. \\

\noindent \textbf{Case 1: $o_{p}<o_{q}$}

\noindent If the inequality is strict, then the image of $g=\rot_{1}^{o_{p}}\in\Gamma$
in $Q_{p}$ is the identity whereas its image $\overline{g}$ in $Q_{q}$
is not. By Corollary \ref{cor:socle is a product of NAFSGs}, $\soc\left(q\right)$
falls under one of cases $\left(2\right)$ or $\left(3\right)$ from
Theorem \ref{thm:socle of primitive}.

Assume first that $\soc\left(q\right)$ falls under case (3). Since
$C_{Q_{p}}\left(\soc\left(q\right)\right)=1$, there is some $h\in\soc\left(q\right)$
not commuting with $\overline{g}\in Q_{q}$, so $e\ne\left[\overline{g},h\right]=\overline{g}h\overline{g}^{-1}h^{-1}\in\soc\left(q\right)\cap\pi_{q}\left(\ker\left(\Gamma\twoheadrightarrow Q_{p}\right)\right)$.
Since $\soc\left(q\right)$ is a minimal normal subgroup of $Q_{q}$,
it is generated by the conjugates of $\left[\overline{g},h\right]$
in $Q_{q}$, all of which also belong to $\pi_{q}\left(\ker\left(\Gamma\twoheadrightarrow Q_{p}\right)\right)$.
Thus $\soc\left(q\right)\le\pi_{q}\left(\ker\left(\Gamma\twoheadrightarrow Q_{p}\right)\right)$.

Now assume that $\soc\left(q\right)$ falls under case (2). Since
regular subgroups of $\mathrm{Sym}\left(n\right)$ are obtained as
the (left or right) regular representation of a group of order $n$,
every element of a regular permutation group has all its cycles with
equal length. Since $\rot_{1}\in Q_{q}$ contains cycles of coprime
lengths, no non-trivial power of it can belong to a regular subgroup,
so $\overline{g}=\rot_{1}^{o_{p}}\notin K_{1}\cup K_{2}$. So there
are $h_{1}\in K_{1}$ and $h_{2}\in K_{2}$ not commuting with $g$.
Consider $h=h_{1}h_{2}\in K_{1}\times K_{2}=\soc\left(q\right)$.
Then $\left[\overline{g},h\right]=\left(\left[\overline{g},h_{1}\right],\left[\overline{g},h_{1}\right]\right)\in K_{1}\times K_{2}=\soc\left(q\right)$
belongs also to $\pi_{q}\left(\ker\left(\Gamma\twoheadrightarrow Q_{p}\right)\right)$
but not to $K_{1}\cup K_{2}$. The only normal subgroups of $Q_{p}$
which are contained in $K_{1}\times K_{2}$ are $1,K_{1},K_{2}$ and
$K_{1}\times K_{2}$. Hence $K_{1}\times K_{2}$ is generated by the
conjugates in $Q_{q}$ of $\left[\overline{g},h\right]$, all of which
belong to $\pi_{q}\left(\ker\left(\Gamma\twoheadrightarrow Q_{p}\right)\right)$.
Thus $\soc\left(q\right)\le\pi_{p}\left(\ker\left(\Gamma\twoheadrightarrow Q_{p}\right)\right)$.\\

\noindent \textbf{Case 2: $o_{p}=o_{q}$}

\noindent We are left with the rare case\footnote{In fact, the only such case with $p<1{,}000{,}000$ is $p=5$.}
that $o_{p}=o_{q}$, as in $p=11$ and $q=5$. In this case $p>q$,
$p\equiv3\left(4\right)$, $q\equiv1\left(4\right)$ and $\left(p^{2}-1\right)=q\left(q^{2}-1\right)$.
In particular, as $Q_{q}$ is primitive, it contains the full alternating
group $\mathrm{Alt}\left(Y^{*}\left(q\right)\right)$ by the CFSG-free
Theorem \ref{thm:1 mod 4}. We claim that $Q_{p}$ has no composition
factor isomorphic to $\mathrm{Alt}\left(Y^{*}\left(q\right)\right)$.
Using this, we can finish as in the proof of Lemma \ref{lem:alt =00003D=00003D> transitivity on Y*(n)}:
indeed, consider the following normal series of $Q_{pq}$
\begin{equation}
1\trianglelefteq Q_{pq}\cap\left[1\times\mathrm{Alt}\left(Y^{*}\left(q\right)\right)\right]\trianglelefteq Q_{pq}\cap\left[1\times\mathrm{Sym}\left(Y^{*}\left(q\right)\right)\right]\trianglelefteq Q_{pq}.\label{eq:normal series of Q_pq}
\end{equation}
Since $Q_{q}$ is a quotient of $Q_{pq}$, $\mathrm{Alt\left(Y^{*}\left(q\right)\right)}$
is a composition factor of $Q_{pq}$, so it has to be a composition
factor of one of the quotients in (\ref{eq:normal series of Q_pq}).
The rightmost quotient is $Q_{p}$ which we show below has no composition
factor isomorphic to $\mathrm{Alt\left(Y^{*}\left(q\right)\right)}$.
The second quotient is $\nicefrac{\mathbb{Z}}{2\mathbb{Z}}$ or trivial.
Thus, the leftmost quotient contains $\mathrm{Alt\left(Y^{*}\left(q\right)\right)}$
as a composition factor, namely, $Q_{pq}\ge1\times\mathrm{Alt\left(Y^{*}\left(q\right)\right)}$,
and we are done as $\soc\left(q\right)=\mathrm{Alt\left(Y^{*}\left(q\right)\right)}$.

So we have left to show that $Q_{p}$ has no composition factor isomorphic
to $\mathrm{Alt\left(Y^{*}\left(q\right)\right)}$. This is certainly
the case if $Q_{p}\ge\mathrm{Alt\left(Y^{*}\left(p\right)\right)}$
(as in the case $p=11,q=5$). So assume $Q_{p}\ngeqslant\mathrm{Alt}\left(Y^{*}\left(p\right)\right)$
and proceed using Theorem \ref{thm:bounds on permutation groups}. 

First, assume that $Q_{p}$ is \emph{not} $2$-transitive. Asymptotically,
its order is smaller than that of $\mathrm{Alt}\left(Y^{*}\left(q\right)\right)$:
indeed, if $n=\left|Y^{*}\left(p\right)\right|=\frac{p\left(p-3\right)}{4}$
then $n\approx q^{3}$, and so by Theorem \ref{thm:bounds on permutation groups},
\begin{eqnarray*}
\ln\left|\mathrm{Alt}\left(Y^{*}\left(q\right)\right)\right| & = & \ln\left(\frac{1}{2}\cdot\left(\frac{q\left(q+3\right)}{4}\right)!\right)\approx q^{2}\ln q\\
\ln\left|Q_{p}\right| & \le & 4\sqrt{n}\left(\ln n\right)^{2}\approx q^{1.5}\left(\ln q\right)^{2}.
\end{eqnarray*}
In fact, this asymptotic reasoning starts taking effect for $q\ge203{,}897$,
but for smaller values of $q$ there are no cases for which $o_{p}=o_{q}$
except for $q=5$ (this was easily verified by computer). 

Finally, assume that $Q_{p}$ is $2$-transitive. Then, not only does
it not have a composition factor isomorphic to $\mathrm{Alt\left(Y^{*}\left(q\right)\right)}$,
it does not even have a section isomorphic to it: since $\frac{p\left(p-3\right)}{4}\ge216$,
Theorem \ref{thm:bounds on permutation groups} says that $k=\frac{q\left(q+3\right)}{4}<6\ln\frac{p\left(p-3\right)}{4}$.
This is impossible when $q\ge13$. 
\end{proof}
We can now finish our CFSG-free proof of Lemma \ref{lem:primitive =00003D=00003D> socle contained in kernel}.
\begin{proof}[CFSG-free proof of Lemma \ref{lem:primitive =00003D=00003D> socle contained in kernel}]
 Assume $n=p_{1}\cdots p_{k}$ is a product of distinct primes with
$Q_{p_{1}},\ldots,Q_{p_{k}}$ primitive and $p_{1},\ldots,p_{k}$
ordered as in Notation \ref{notation:primes,kernels}. We need to
show that for every $j=2,\ldots,k$, the image of $\Omega_{j-1}$
in $Q_{p_{j}}$, $\pi_{p_{j}}\left(\Omega_{j-1}\right)$ contains
a subgroup $H_{j}\le\mathrm{Sym}\left(Y^{*}\left(p_{j}\right)\right)$
which is transitive and isomorphic to a direct product of non-abelian
simple groups. We show that $\pi_{p_{j}}\left(\Omega_{j-1}\right)\ge\soc\left(p_{j}\right)$,
which is enough by Corollary \ref{cor:socle is a product of NAFSGs}.

Without loss of generality, it is enough to prove this when $j=k$.
As $\soc\left(p_{k}\right)\cong\prod_{i=1}^{m}T_{i}$ with $T_{1},\ldots,T_{m}$
non-abelian simple groups, each of them satisfies $\left[T_{i},T_{i}\right]=T_{i}$.
Hence for any $t\in\mathbb{Z}_{\ge1}$ there is a sequence of elements
$\overline{g}_{1},\ldots,\overline{g}_{t}\in\soc\left(p_{k}\right)$
so that the nested commutator 
\[
\left[\ldots\left[\left[\overline{g}_{1},\overline{g}_{2}\right],\overline{g}_{3}\right],\ldots,\overline{g}_{t}\right]
\]
has non-trivial projection in each of the $T_{i}$'s. Choose such
a sequence of length $t=k-1$. By Lemma \ref{lem:Q_p primitive than socle contained in image of kernel of Q_q},
for every $i=1,\ldots,k-1$, there is an element $g_{i}\in\Gamma$
with $\pi_{p_{i}}\left(g_{i}\right)=1$ and $\pi_{p_{k}}\left(g_{i}\right)=\overline{g}_{i}$.
The element 
\[
g=\left[\ldots\left[\left[g_{1},g_{2}\right],g_{3}\right],\ldots,g_{k-1}\right]\in\Gamma
\]
satisfies then that $\pi_{p_{i}}\left(g\right)=1$ for all $i=1,\ldots,k-1$,
whereas $\pi_{p_{k}}\left(g\right)\in\soc\left(p_{k}\right)$ in not
contained in any proper normal subgroup of $\soc\left(p_{k}\right)$.
Hence every element of $\soc\left(p_{k}\right)$ is a product of conjugates
of $\pi_{p_{k}}\left(g\right)$, and we obtain that $\pi_{p_{k}}\left(\Omega_{k-1}\right)\ge\soc\left(p_{k}\right)$.
\end{proof}

\section{$T_{2}$-systems\label{sec:T-systems}}

This section explains why Theorem \ref{thm:Action on T2-system} is
equivalent to Theorems \ref{thm:1 mod 4} and \ref{thm:3 mod 4: alternating given specific conditions}.
Namely, if we let $\Sigma_{2,-2}\left(p\right)$ denote the set of
$\mathrm{PSL}\left(2,p\right)$-defining subgroups of $\F_{2}$ with
associated trace $-2$, our goal here is to show:
\begin{enumerate}
\item A one-to-one correspondence between $Y^{*}\left(p\right)$ and $\Sigma_{2,-2}\left(p\right)$,
and
\item An isomorphism between $Q_{p}$, the group of permutations induced
by the action of $\Gamma$ on $Y^{*}\left(p\right)$, and the group
of permutations induced by the action of $\mathrm{Aut}\left(\F_{2}\right)$
on $\Sigma_{2,-2}\left(p\right)$.
\end{enumerate}
First, let us define $\Sigma_{2,-2}\left(p\right)$ properly. For
$A,B\in\mathrm{PSL}\left(2,p\right)$, define 
\begin{equation}
\Tr\left(A,B\right)\stackrel{\mathrm{def}}{=}\left(\tr A,\tr B,\tr AB\right)\in\nicefrac{\mathbb{F}_{p}^{~3}}{\sim},\label{eq:Tr for pairs of matrices}
\end{equation}
where $\sim$ is the equivalence of changing the sign of two of the
coordinates (each of $A$ and $B$ is a well-defined matrix in $\mathrm{SL}\left(2,p\right)$
up to a sign). Assume $\left\langle A,B\right\rangle =\mathrm{PSL}\left(2,p\right)$,
and let $\varphi\colon\F_{2}\twoheadrightarrow\mathrm{PSL}\left(2,p\right)$
be the epimorphism mapping the generators $a$ and $b$ of $\F_{2}$
to $A$ and $B$, respectively. The kernel $N=\ker\varphi$ is a $\mathrm{PSL}\left(2,p\right)$-defining
subgroup of $\F_{2}$, and define
\[
\Tr\left(N\right)\stackrel{\mathrm{def}}{=}\Tr\left(A,B\right).
\]
Recall that $\Sigma_{2}\left(G\right)$ denotes the set of $G$-defining
subgroups of $\F_{2}$.
\begin{claim}
The map $\Tr\colon\Sigma_{2}\left(\mathrm{PSL}\left(2,p\right)\right)\to\nicefrac{\mathbb{F}_{p}^{~3}}{\sim}$
is well-defined.
\end{claim}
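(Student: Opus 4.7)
The plan is to split the ambiguity in the definition of $\Tr(N)$ into two parts: the ambiguity in lifting $\mathrm{PSL}(2,p)$-elements to $\mathrm{SL}(2,p)$, and the ambiguity in choosing the epimorphism $\varphi\colon\F_{2}\twoheadrightarrow\mathrm{PSL}(2,p)$ with a prescribed kernel $N$.

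First I would check that for a fixed pair $(A,B)\in\mathrm{PSL}(2,p)^{2}$, the value $\Tr(A,B)$ is a well-defined element of $\nicefrac{\mathbb{F}_{p}^{\;3}}{\sim}$. The $\mathrm{SL}(2,p)$-lifts $(\tilde A,\tilde B)$ are determined only up to independent signs, giving four possible raw triples in $\mathbb{F}_{p}^{\;3}$. Replacing $\tilde A$ by $-\tilde A$ flips the signs of $\tr\tilde A$ and $\tr(\tilde A\tilde B)$ while preserving $\tr\tilde B$, and the analogous statement holds for replacing $\tilde B$ by $-\tilde B$. Thus the four raw triples form a single $\sim$-equivalence class, so that $\Tr(A,B)\in\nicefrac{\mathbb{F}_{p}^{\;3}}{\sim}$ is unambiguous.

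Second, given $N\in\Sigma_{2}(\mathrm{PSL}(2,p))$, any two epimorphisms $\varphi_{1},\varphi_{2}\colon\F_{2}\twoheadrightarrow\mathrm{PSL}(2,p)$ with $\ker\varphi_{i}=N$ differ by post-composition with some $\psi\in\mathrm{Aut}(\mathrm{PSL}(2,p))$, i.e., $\varphi_{2}=\psi\circ\varphi_{1}$ and therefore $(A_{2},B_{2})=(\psi A_{1},\psi B_{1})$. Hence it suffices to show that $\Tr(\psi A,\psi B)=\Tr(A,B)$ in $\nicefrac{\mathbb{F}_{p}^{\;3}}{\sim}$ for every $\psi\in\mathrm{Aut}(\mathrm{PSL}(2,p))$ and every generating pair $(A,B)$. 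For $p\geq 5$ it is classical that $\mathrm{Aut}(\mathrm{PSL}(2,p))=\mathrm{PGL}(2,p)$, with every such automorphism realized by conjugation by some $g\in\mathrm{GL}(2,p)$. If $\tilde A,\tilde B\in\mathrm{SL}(2,p)$ are lifts of $A,B$, then $g\tilde A g^{-1}$ and $g\tilde B g^{-1}$ lie in $\mathrm{SL}(2,p)$ (since $g$ is invertible, determinants are preserved) and are lifts of $\psi A$ and $\psi B$ respectively; moreover $(g\tilde A g^{-1})(g\tilde B g^{-1})=g\tilde A\tilde B g^{-1}$. Since conjugation preserves trace, the raw triples for $(A,B)$ and $(\psi A,\psi B)$ actually coincide in $\mathbb{F}_{p}^{\;3}$, a fortiori in $\nicefrac{\mathbb{F}_{p}^{\;3}}{\sim}$. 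The corner cases are immediate: for $p=2$, $\mathrm{PSL}(2,2)\cong S_{3}$ has only inner automorphisms (which are conjugation by matrices in $\mathrm{GL}(2,2)=\mathrm{SL}(2,2)$), and for $p=3$ we have $Y^{*}(3)=\emptyset$ so the statement is vacuous.

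I do not expect any real obstacle; the whole argument is a bookkeeping check. The only points requiring minor care are (i) matching the four-fold sign ambiguity from $\mathrm{SL}$-lifts with the $\sim$-equivalence that identifies triples differing by a sign change in two coordinates, and (ii) confirming that $\mathrm{Aut}(\mathrm{PSL}(2,p))$ does not contain any non-$\mathrm{PGL}$ automorphisms (such as field/Galois automorphisms), which holds because $\mathbb{F}_{p}$ is prime and so its absolute Galois group over the prime field is trivial.
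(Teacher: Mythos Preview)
Your proposal is correct and follows essentially the same approach as the paper: both argue that two epimorphisms with the same kernel differ by post-composition with an element of $\mathrm{Aut}(\mathrm{PSL}(2,p))=\mathrm{PGL}(2,p)$, and that conjugation by $\mathrm{PGL}(2,p)$ preserves traces. Your write-up is simply more explicit about the sign-ambiguity of the $\mathrm{SL}$-lifts (which the paper absorbs into the definition of $\Tr(A,B)$) and about the small-$p$ cases; one minor quibble is that your $p=3$ remark about $Y^{*}(3)=\emptyset$ does not literally address the claim as stated for all of $\Sigma_{2}$, but the $\mathrm{PGL}$ argument applies there too since $\mathrm{Aut}(\mathrm{PSL}(2,3))\cong S_{4}=\mathrm{PGL}(2,3)$.
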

\begin{proof}
Let $G=\mathrm{PSL}\left(2,p\right)$. Given $N\in\Sigma_{2}\left(G\right)$,
all epimorphisms $\F_{2}\twoheadrightarrow G$ with kernel $N$ are
obtained one from the other by post-composition with some automorphism
from $\mathrm{Aut}\left(G\right)$. But every automorphism of $G$
is obtained by a conjugation by some element from $\mathrm{PGL}\left(2,p\right)$.
Evidently, such conjugation does not effect the image of $\Tr$ on
the images of the generators $a$ and $b$ of $\F_{2}$.
\end{proof}
Recall that $\tr\left(\left[A,B\right]\right)=Q\left(\tr A,\tr B,\tr AB\right)$
where $Q\left(x,y,z\right)=x^{2}+y^{2}+z^{2}-xyz-2$. Thus, for $N\in\Sigma_{2}\left(\mathrm{PSL}\left(2,p\right)\right)$,
the element
\[
Q\left(N\right)\stackrel{\mathrm{def}}{=}Q\left(\Tr\left(N\right)\right)\in\mathbb{F}_{p}
\]
is well-defined, and we denote\marginpar{$\Sigma_{2,-2}\left(p\right)$}
\[
\Sigma_{2,-2}\left(p\right)\stackrel{\mathrm{def}}{=}Q^{-1}\left(-2\right)\subseteq\Sigma_{2}\left(\mathrm{PSL}\left(2,p\right)\right).
\]
Note that, by definition, for every $N\in\Sigma_{2,-2}\left(p\right)$
the triple $\Tr\left(N\right)$ is (an equivalence class up to sign
changes of) a solution to the Markoff equation (\ref{eq:markoff})
over $\nicefrac{\mathbb{Z}}{p\mathbb{Z}}$.
\begin{claim}
The map $\Tr\Big|_{\Sigma_{2,-2}\left(p\right)}$ is a bijection from
$\Sigma_{2,-2}\left(p\right)$ to $Y^{*}\left(p\right)$.
\end{claim}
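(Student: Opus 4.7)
The plan is to split the claim into three parts: (i) the image of $\Tr$ restricted to $\Sigma_{2,-2}\left(p\right)$ lies in $Y^{*}\left(p\right)$; (ii) the map is surjective; (iii) it is injective. The definition of the block $\left[x,y,z\right]$ and the sign-change equivalence $\sim$ on triples match precisely the ambiguity in lifting an element of $\mathrm{PSL}\left(2,p\right)$ to $\mathrm{SL}\left(2,p\right)$ (replacing $A$ by $-A$ flips the sign of $\tr A$ and $\tr AB$ while preserving $\tr B$), so the map is well defined by inspection.

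For (i), let $N\in\Sigma_{2,-2}\left(p\right)$ with epimorphism $\varphi\colon\F_{2}\twoheadrightarrow\mathrm{PSL}\left(2,p\right)$ of kernel $N$, and lift $\varphi\left(a\right),\varphi\left(b\right)$ to $A,B\in\mathrm{SL}\left(2,p\right)$. The trace identity gives $Q\left(\tr A,\tr B,\tr AB\right)=\tr\left[A,B\right]=-2$, which rearranges to the Markoff equation \eqref{eq:markoff}. If the triple were $\left(0,0,0\right)$, then $A^{2}=B^{2}=\left(AB\right)^{2}=-I$, so $\bar{A},\bar{B},\overline{AB}$ would all be involutions in $\mathrm{PSL}\left(2,p\right)$ satisfying $\bar{A}\bar{B}=\overline{AB}$, forcing $\langle\bar{A},\bar{B}\rangle$ to be a Klein four-group and contradicting surjectivity of $\varphi$. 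Hence the triple lies in $X^{*}\left(p\right)$ and its block in $Y^{*}\left(p\right)$.

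For (ii), given $\left[x,y,z\right]\in Y^{*}\left(p\right)$, I invoke the classical Fricke realization: there exist $A,B\in\mathrm{SL}\left(2,p\right)$ with $\left(\tr A,\tr B,\tr AB\right)=\left(x,y,z\right)$ (for instance an explicit lower-triangular--anti-diagonal construction). Define $\varphi\colon\F_{2}\to\mathrm{PSL}\left(2,p\right)$ by $a\mapsto\bar{A}$, $b\mapsto\bar{B}$; I need $\varphi$ surjective. Since $\tr\left[A,B\right]=-2$, the commutator has both eigenvalues equal to $-1$, so $\left[A,B\right]$ is either $-I$ or a non-trivial element projecting to an order-$p$ unipotent in $\mathrm{PSL}\left(2,p\right)$. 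A direct computation diagonalizing $A$ (over $\mathbb{F}_{p^{2}}$ in the elliptic case, with the parabolic case handled separately) shows $\left[A,B\right]=-I$ forces $\tr A=\tr B=\tr AB=0$, excluded by $\left(x,y,z\right)\in X^{*}\left(p\right)$. Thus $\langle\bar{A},\bar{B}\rangle$ contains an element of order $p$. Also, $\tr\left[A,B\right]=-2\neq2$ implies $A,B$ have no common eigenvector, so $\bar{A},\bar{B}$ do not share a fixed point in $\mathbb{P}^{1}\left(\mathbb{F}_{p}\right)$ and hence are not simultaneously contained in any Borel subgroup. Dickson's classification of subgroups of $\mathrm{PSL}\left(2,p\right)$ then forces $\langle\bar{A},\bar{B}\rangle=\mathrm{PSL}\left(2,p\right)$, proving surjectivity.

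For (iii), assume $N_{1},N_{2}\in\Sigma_{2,-2}\left(p\right)$ have the same $\Tr$-image in $Y^{*}\left(p\right)$; choose epimorphisms $\varphi_{i}$ with kernel $N_{i}$ and lifts $A_{i},B_{i}$ chosen (using the sign-change freedom) so that the two trace triples literally coincide. Since $\tr\left[A_{i},B_{i}\right]=-2\neq2$, each pair acts absolutely irreducibly, and the Fricke uniqueness theorem supplies $g\in\mathrm{GL}\left(2,p\right)$ with $gA_{1}g^{-1}=A_{2}$ and $gB_{1}g^{-1}=B_{2}$. Conjugation by the image of $g$ in $\mathrm{PGL}\left(2,p\right)$ is an automorphism of $\mathrm{PSL}\left(2,p\right)$ intertwining $\varphi_{1}$ and $\varphi_{2}$, so $N_{1}=\ker\varphi_{1}=\ker\varphi_{2}=N_{2}$. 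The main obstacle is the generation argument in (ii): Fricke's realization gives \emph{some} pair with the prescribed traces, but one must rule out the degenerate anti-commuting case $\left[A,B\right]=-I$ by a trace computation and then promote ``contains an order-$p$ element and is not contained in a Borel'' into ``equals $\mathrm{PSL}\left(2,p\right)$'' via Dickson's list.
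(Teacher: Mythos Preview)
Your proof is correct and follows the same three-step outline as the paper's, but the execution is genuinely different and more self-contained. The paper dispatches surjectivity by citing \cite[Thm~1]{macbeath1969generators} for the existence of a pair with prescribed traces and then \cite[Section~11]{mccullough2013nielsen} as a black box for the statement that $(0,0,0)$ is the unique non-generating Markoff triple; it handles injectivity by citing \cite[Thms~2,3]{macbeath1969generators} for conjugacy of generating pairs with equal trace triple. You instead unpack both steps: for generation you observe that $\tr[A,B]=-2\ne 2$ rules out a common eigenvector (hence containment in a Borel), that $[A,B]=-I$ forces the trace triple to vanish (your diagonalization is more work than needed---from $AB=-BA$ one gets $BAB^{-1}=-A$ and $ABA^{-1}=-B$, whence all three traces vanish immediately), so $[\bar A,\bar B]$ has order~$p$, and then Dickson's subgroup list finishes; for injectivity you invoke the Fricke/absolute-irreducibility uniqueness and pass to $\mathrm{PGL}(2,p)$. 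Your route is more elementary and makes the role of the excluded triple $(0,0,0)$ transparent, at the cost of appealing to Dickson's classification; the paper's route is shorter on the page but leans on two external references. One small point worth making explicit in your (iii): the conjugating element a~priori lives in $\mathrm{GL}(2,\overline{\mathbb{F}_p})$, and descends to $\mathrm{GL}(2,p)$ by Noether--Deuring (or equivalently because the pair generates, so conjugation by $g$ already preserves $\mathrm{SL}(2,p)$, which is how the paper phrases it).
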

\begin{proof}
Consider the map $\widetilde{\Tr}\colon\mathrm{SL}\left(2,p\right)\times\mathrm{SL}\left(2,p\right)\to\mathbb{F}_{p}^{~3}$
defined as in (\ref{eq:Tr for pairs of matrices}). By \cite[Theorems 2 and 3]{macbeath1969generators},
if $\left(x,y,z\right)\in\mathbb{F}_{p}^{~3}$ is the image of some
generating pair in $\mathrm{SL}\left(2,p\right)$, then every two
pairs in $\widetilde{\Tr}^{-1}\left(\left(x,y,z\right)\right)$ are
conjugated one to the other by an element $g\in\mathrm{SL}\left(2,\overline{\mathbb{F}_{p}}\right)$.
Since these pairs are generating, this conjugation by $g$ is an automorphism
of $\mathrm{SL}\left(2,p\right)$. As every automorphism of $\mathrm{SL}\left(2,p\right)$
is also an automorphism of $\mathrm{PSL}\left(2,p\right)$, we obtain
that 
\[
\Tr\Big|_{\Sigma_{2,-2}\left(p\right)}\colon\Sigma_{2,-2}\left(p\right)\to\nicefrac{\mathbb{F}_{p}^{~3}}{\sim}
\]
is injective.

By \cite[Thm 1]{macbeath1969generators}, the map $\widetilde{\Tr}$
is surjective. The analysis in \cite[Section 11]{mccullough2013nielsen}
shows that the only triple $\left(x,y,z\right)\in\mathbb{F}_{p}^{~3}$
with $Q\left(x,y,z\right)=-2$ which does not correspond to generating
pairs is\footnote{To see that $\left(0,0,0\right)$ is not associated with a generating
pair, note that if $A\in\mathrm{PSL}\left(2,p\right)$ has trace $0$,
then $A$ is an involution. If both $A$ and $B$ are involutions,
then $\left\langle A,B\right\rangle $ is a dihedral group, which
is a proper subgroup of $\mathrm{PSL}\left(2,p\right)$. } $\left(0,0,0\right)$. This completes the proof of the claim.
\end{proof}
We have left to show the isomorphism of $Q_{p}$ and the permutation
group induced by $\mathrm{Aut}\left(\F_{2}\right)$ on $Y^{*}\left(p\right)\cong\Sigma_{2,-2}\left(p\right)$.
Recall that $Q_{p}=\left\langle \tau_{\left(12\right)},\tau_{\left(23\right)},R_{3}\right\rangle $.
For $\F_{2}=\F\left(a,b\right)$, $\mathrm{Aut}\left(\F_{2}\right)$
is generated by the following Nielsen moves\footnote{We deliberately copy the notation for these Nielsen moves from \cite{mccullough2013nielsen}.}:
$r\colon\left(a,b\right)\mapsto\left(a^{-1},b\right)$, $s\colon\left(a,b\right)\mapsto\left(b,a\right)$
and $t\colon\left(a,b\right)\mapsto\left(a^{-1},ab\right)$. The induced
action of these three automorphisms on $Y^{*}\left(p\right)$ is easily
seen to be the same action given by $R_{3}$, $\tau_{\left(12\right)}$
and $\tau_{\left(23\right)}$, respectively. 

\section*{Appendix}

\begin{appendices}

\section{On the order of a quadratic integer modulo most primes\label{sec:Dan-Carmon}\protect \\
By Dan Carmon}

Throughout this appendix, we use the notation $f\ll g$ to mean that
there exists an absolute constant $C>0$ for which $f\le Cg$ for
all valid values of the implicit variables. The similar notation $f\ll_{a}g$
means there exists a function $C=C(a)>0$ for which $f\le Cg$. The
notation $f\asymp g$ is shorthand for ``$f\ll g$ and $g\ll f$''.

\subsection*{The main claim}

Let $a\in\mathbb{Q}(\sqrt{D})$ be a fixed quadratic integer with
norm 1 and absolute value $|a|>1$ (e.g. $a=\frac{3+\sqrt{5}}{2}$).
For primes $p\nmid D$, consider the residue $\bar{a}=(a\!\!\mod p)$,
as an element of either $\mathbb{F}_{p}$ or $\mathbb{F}_{p^{2}}$,
depending on whether $D$ is a quadratic residue modulo $p$. In both
cases there are two possible choices for $\bar{a}$, but its order
$o_{p}(a)$, which is the smallest positive integer satisfying $\bar{a}^{o_{p}(a)}=1\in\mathbb{F}_{p^{2}}$
is well-defined. Let $\pi(x)=\#\{p\le x:p\text{ -- prime}\}$\marginpar{$\pi\left(x\right)$}
be the prime counting function. We prove the following claim:
\begin{prop}
\label{prop:main appendix prop} For any constant $C\ge1$, 
\begin{equation}
\#\{p\le x:o_{p}(a)\le C\sqrt{x}\}\ll_{a}\frac{\pi(x)}{(\log x)^{\delta}(\log\log x)^{3/2-\delta}},
\end{equation}
where $\delta$ is the Erd\H{o}s-Tenenbaum-Ford constant, 
\[
\delta=1-\frac{1+\log\log2}{\log2}=0.086071....
\]
In particular, the set of primes with $o_{p}(a)>C\sqrt{p}$ has relative
density 1.
\end{prop}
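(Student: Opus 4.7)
The plan is to translate the hypothesis $o_p(a) \le C\sqrt{x}$ into a statement about the divisor structure of $p - 1$ or $p + 1$, and then appeal to Ford's theorem \cite{ford2008distribution} on the distribution of integers with a divisor in a given interval.

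The first step is an arithmetic reduction. Because $a$ has norm $1$, its image $\bar a \in \mathbb{F}_{p^2}^*$ satisfies $\bar a \cdot \mathrm{Frob}_p(\bar a) = 1$, so $\bar a$ lies either in $\mathbb{F}_p^*$ (when $D$ is a quadratic residue mod $p$, in which case $o_p(a) \mid p - 1$) or in the norm-one subgroup of $\mathbb{F}_{p^2}^*$ of order $p + 1$ (when $D$ is a non-residue, so $o_p(a) \mid p + 1$). In both cases $o_p(a) \mid p - \varepsilon_p$ for some $\varepsilon_p \in \{\pm 1\}$, and the hypothesis $o_p(a) \le C\sqrt x$ forces $p - \varepsilon_p$ to possess a divisor $d \le C\sqrt x$, namely $d = o_p(a)$ itself. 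Primes with exceptionally small $o_p(a)$ are disposed of separately: if $o_p(a) = d$ then $p$ divides the nonzero integer $N(a^d - 1) = 2 - a^d - a^{-d}$, whose absolute value is $\asymp_a |a|^d$, so the total count of primes with $o_p(a) \le y_0$ is at most $\sum_{d \le y_0} \omega(N(a^d - 1)) \ll_a y_0^2 / \log y_0$. Choosing $y_0$ to be just slightly below $\sqrt x$ (say $y_0 = \sqrt x / (\log x)^B$ for a suitable constant $B$) makes this contribution negligible compared with the target bound, and reduces matters to counting primes $p \le x$ for which $p - \varepsilon_p$ has a divisor in the short interval $I_x = [y_0,\, C\sqrt x]$.

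The second step is to invoke Ford's theorem on
\[
H(x, y, z) = \#\{m \le x : m \text{ has a divisor in } (y, z]\},
\]
dyadically on subintervals of $I_x$. For each $[y, 2y] \subseteq I_x$, Ford's bound reads $H(x,y,2y) \asymp x/(\log y)^{\delta}(\log\log y)^{3/2}$; passing from generic integers $m$ to shifted primes $m = p \pm 1$ is carried out by combining Ford's structural results on the \emph{anatomy} of typical integers having such a divisor with a sieve estimate (Brun--Titchmarsh for individual moduli, Bombieri--Vinogradov on average) for $\pi(x; d, \pm 1)$. Summing over the $O(\log\log x)$ dyadic subintervals covering $I_x$ then yields the claimed bound $\pi(x)/(\log x)^{\delta}(\log\log x)^{3/2 - \delta}$. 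The density-$1$ conclusion follows immediately since this is $o(\pi(x))$.

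The main technical obstacle is the passage from Ford's theorem for generic integers to the analogous count for shifted primes. Ford's proof proceeds by a delicate induction on the prime factorization of $m$ and does not transfer verbatim to the sparse set $\{p \pm 1 : p \text{ prime}\}$; the exponent $3/2 - \delta$ of $\log\log x$ appearing in the proposition, rather than the ``natural'' $3/2$, reflects precisely the loss incurred when Ford's intricate iterative bounds are replaced by the more elementary sieve-theoretic estimates that are uniform in the modulus. A secondary subtlety, which determines the precise choice of the cutoff $y_0$ and the constant $B$, is controlling the uniformity of the sieve when the modulus $d$ approaches the Bombieri--Vinogradov barrier $\sqrt x$.
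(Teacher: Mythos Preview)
Your overall architecture matches the paper's: split at a threshold $y$, handle $o_p(a)\le y$ by the ``$p$ divides a small product'' argument (your $\sum_{d\le y_0}\omega(N(a^d-1))\ll_a y_0^2/\log y_0$ is essentially the paper's Lemma~1, the Erd\H{o}s--Murty argument), and handle $o_p(a)\in(y,C\sqrt x]$ via Ford's divisor theorem applied to shifted primes $p\pm1$.

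However, your execution of the second step contains two genuine errors. First, the dyadic decomposition does not produce the exponent you claim. The interval $[y_0,C\sqrt x]$ with $y_0=\sqrt x/(\log x)^B$ is covered by $\asymp\log\log x$ dyadic pieces $[y,2y]$, and for each of these Ford's estimate gives $H(x,y,2y)\asymp x\,u^\delta(\log\tfrac{2}{u})^{-3/2}$ with $u=\log 2/\log y\asymp 1/\log x$, i.e.\ $H(x,y,2y)\asymp x/[(\log x)^\delta(\log\log x)^{3/2}]$. Summing $\asymp\log\log x$ such terms yields exponent $1/2$ on $\log\log x$, not $3/2-\delta$. The correct exponent arises only if you apply Ford's theorem once to the \emph{full} interval $(y,C\sqrt x]$: with $y=\sqrt{x/\alpha}$ one gets $u\asymp\log\alpha/\log x$, hence $u^\delta(\log\tfrac{2}{u})^{-3/2}\asymp(\log\alpha)^\delta(\log x)^{-\delta}(\log\log x)^{-3/2}$, and balancing this against the $1/\alpha$ from the first lemma forces $\alpha=(\log x)^\delta(\log\log x)^{3/2-\delta}$. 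That optimisation is the source of the exponent, not any sieve loss.

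Second, the ``main technical obstacle'' you describe---transferring Ford's bound from generic integers to shifted primes---is already a theorem in the reference you cite: \cite[Theorem~6]{ford2008distribution} gives $H(x,y,z;P_\lambda)\ll_\lambda H(x,y,z)/\log x$ directly for $P_\lambda=\{p+\lambda\}$ in the relevant range $z\ge y+(\log y)^{2/3}$. The paper's proof simply invokes this; no Brun--Titchmarsh or Bombieri--Vinogradov argument is needed, and your interpretation of $3/2-\delta$ as reflecting a sieve-induced loss is incorrect.
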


\subsection*{Proof outline}

Proposition \ref{prop:main appendix prop} follows from the combination
of two sub-lemmas: 
\begin{lem}
\label{lem 1} Let $\alpha=\alpha(x)$ tend to infinity arbitrarily
slowly with $x$, and let $y=\sqrt{\frac{x}{\alpha}}$. Then 
\begin{equation}
\#\{p\le x:o_{p}(a)\le y\}\ll_{a}\frac{\pi(x)}{\alpha}.\label{lem 1 eq}
\end{equation}
\end{lem}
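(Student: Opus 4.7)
The plan is to show that every $p\in S=\{p\le x:o_p(a)\le y\}$ divides at least one of the integers $N(a^n-1)$, $n\le y$, where $N$ is the norm of $\mathbb Q(\sqrt D)/\mathbb Q$. Combining this with the easy bound $|N(a^n-1)|\ll_a|a|^n$ will yield $\sum_{p\in S}\log p\ll_a y^2=x/\alpha$, and splitting off the small primes $p\le\sqrt x$ will upgrade this into the desired count $|S|\ll_a\pi(x)/\alpha$, since the remaining primes each contribute $\log p\ge\tfrac12\log x$.

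First I would record two preliminaries. Since $N(a)=1$ and $|a|>1$ the field $\mathbb Q(\sqrt D)$ must be real quadratic (in the imaginary case $|a|^2=N(a)=1$), so the nontrivial Galois conjugate $\sigma(a)=a^{-1}$ satisfies $|\sigma(a)|<1$ and
\[
|N(a^n-1)|=|a^n-1|\cdot|\sigma(a)^n-1|\le(|a|^n+1)(|\sigma(a)|^n+1)\le 4|a|^n.
\]
For the divisibility: for any rational prime $p$ coprime to the (finitely many) primes dividing the discriminant of $\mathbb Z[a]$, the residue $\bar a\in\mathbb F_{p^2}$ is well defined and nonzero; if $\mathfrak p$ is a prime of $\mathcal O_{\mathbb Q(\sqrt D)}$ above $p$, then $\bar a^{o_p(a)}=1$ forces $\mathfrak p\mid a^{o_p(a)}-1$ in $\mathcal O$, and taking $\mathbb Q$-norms yields $p\mid N(a^{o_p(a)}-1)$ in $\mathbb Z$.

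With these in hand the proof is a one-liner: the distinct primes of $S$ divide the product $\prod_{n\le y}|N(a^n-1)|$, so
\[
\sum_{p\in S}\log p\le\sum_{n\le y}\log|N(a^n-1)|\le\sum_{n\le y}\bigl(n\log|a|+\log 4\bigr)\ll_a y^2=\tfrac{x}{\alpha}.
\]
I then split $S=S_{\le\sqrt x}\cup S_{>\sqrt x}$. For the high piece, $\log p>\tfrac12\log x$ yields $|S_{>\sqrt x}|\le(2/\log x)\sum_{p\in S}\log p\ll_a x/(\alpha\log x)\asymp \pi(x)/\alpha$. For the low piece the trivial bound $|S_{\le\sqrt x}|\le\pi(\sqrt x)\ll\sqrt x/\log x$ is $o(\pi(x)/\alpha)$ because $\alpha=o(\sqrt x)$ (since $\alpha$ is assumed to grow arbitrarily slowly). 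Adding the $O(1)$ excluded primes, this finishes the proof.

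There is no real obstacle here; the argument is essentially a single product bound. The only substantive input is the norm estimate $|N(a^n-1)|\ll_a|a|^n$, which relies on $a$ being a unit with $|\sigma(a)|<1$. The crucial $\log x$-factor that turns the naive bound $x/\alpha$ into $\pi(x)/\alpha$ comes for free from the dyadic split by size of $p$, exploiting the fact that $\pi(\sqrt x)$ is negligible in the desired order of magnitude whenever $\alpha\to\infty$.
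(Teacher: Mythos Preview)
Your proof is correct and follows essentially the same route as the paper's: both bound $\sum_{p\in S}\log p$ by $\log$ of a product of small integers that each prime of $S$ must divide, then split off primes below $\sqrt{x}$. The only cosmetic difference is the choice of integer sequence --- the paper uses $A_k=(a^k-a^{-k})/\sqrt{D}$ where you use $N(a^n-1)$ --- but both satisfy the same growth bound $\ll_a|a|^n$ and the same divisibility by $p$ when $o_p(a)=n$, so the arguments are interchangeable.
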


\begin{lem}
\label{lem 2} Let $\alpha,y$ be as in the previous Lemma. Define
$z=C\sqrt{x}$, and $u_{0}=\frac{\log\alpha}{\log x}$. Suppose further
that $\alpha\in\left(\frac{4}{C^{2}},\frac{\sqrt{x}}{C}\right)$.
Then 
\begin{equation}
\#\{p\le x:\exists d\in(y,z],\ p\equiv\pm1\!\!\!\pmod d\}\ll u_{0}^{\delta}\left(\log\tfrac{2}{u_{0}}\right)^{-3/2}\pi(x).\label{lem 2 eq}
\end{equation}
\end{lem}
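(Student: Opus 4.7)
The plan is to reduce the estimate to a bound on integers $n\le x$ with a divisor in $(y,z]$, apply Ford's theorem to that count, and transfer the resulting inequality from arbitrary integers to shifted primes by a sieve.

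The first step is reformulation. The condition $p\equiv\pm1\pmod d$ is equivalent to $d\mid p\mp 1$, so the left-hand side of the displayed inequality in the Lemma is bounded above by $S_{+}(x,y,z)+S_{-}(x,y,z)$, where
\[
S_{\pm}(x,y,z):=\#\{p\le x\text{ prime}: p\pm 1\text{ has a divisor in }(y,z]\}.
\]
By symmetry it suffices to estimate $S_{-}$. Writing $\log y=\tfrac{1}{2}(1-u_{0})\log x$ and $\log(z/y)=\log C+\tfrac{1}{2}\log\alpha$, one checks that the parameter $u:=\log(z/y)/\log y$ satisfies $u\asymp u_{0}$, and the hypotheses $4/C^{2}<\alpha<\sqrt{x}/C$ guarantee $y<z\le y^{2}\le x$, placing $(y,z)$ inside the range of Ford's theorem. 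In the second step one invokes that theorem (Ford, \emph{The distribution of integers with a divisor in a given interval}, Ann.\ of Math., 2008) in the form
\[
H(x,y,z):=\#\{n\le x: n\text{ has a divisor in }(y,z]\}\;\ll\;\frac{x\,u^{\delta}}{(\log(2/u))^{3/2}},
\]
which in our regime yields $H(x,y,z)\ll x\cdot u_{0}^{\delta}(\log(2/u_{0}))^{-3/2}$. This is the integer analogue of the desired bound, short of the factor $\log x$ separating a count of integers $\le x$ from $\pi(x)$.

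The main obstacle is the third step: transferring from integers to shifted primes. A direct union bound $S_{-}(x,y,z)\le\sum_{y<d\le z}\pi(x;d,1)$ combined with Brun--Titchmarsh gives only $S_{-}\ll\pi(x)\log\alpha$, which is too weak whenever $\alpha\to\infty$ because it misses the cancellation encoded by the Ford exponent $\delta$. To recover that cancellation one adapts Ford's argument for $H(x,y,z)$ to $p-1$ in place of $n$: decompose each would-be divisor $d\in(y,z]$ of $p-1$ according to a dyadic bucketing of its prime factors, reducing the count to the number of primes $p\le x$ with $p-1$ realizing each prescribed ``anatomy'' of small prime factors; for every such anatomy apply an upper-bound sieve (Selberg or the fundamental lemma) to the single linear form $p-1=de$, which contributes the expected factor $1/\log x$ while leaving intact the combinatorial savings $u_{0}^{\delta}(\log(2/u_{0}))^{-3/2}$ produced by Ford's enumeration of admissible anatomies. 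Summing over anatomies yields
\[
S_{-}(x,y,z)\ll\frac{H(x,y,z)}{\log x}\ll\pi(x)\cdot u_{0}^{\delta}(\log(2/u_{0}))^{-3/2},
\]
and the matching bound for $S_{+}$ completes the proof. This sieve-theoretic transfer is the technical heart of the argument; if a shifted-primes version of Ford's theorem is available in the literature (via Ford, Koukoulopoulos or others), one could instead quote it as a black box and bypass the adaptation.
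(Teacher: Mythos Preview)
Your approach is correct and matches the paper's: reduce to counting shifted primes $p\pm1$ with a divisor in $(y,z]$, invoke Ford's bound on $H(x,y,z)$, and then transfer to shifted primes at the cost of a factor $1/\log x$. The only difference is that your ``technical heart'' in step~3 is unnecessary work: Ford's 2008 paper already contains the shifted-primes result you need (his Theorem~6 gives $H(x,y,z;P_{\lambda})\ll_{\lambda}H(x,y,z)/\log x$ in the regime $z\ge y+(\log y)^{2/3}$), so the black box you mention in your final sentence is precisely what the paper cites, and no sieve adaptation is required.
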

Indeed, since $a$ has norm 1, $o_{p}(a)$ is always a factor of either
$p-1$ when $D$ is a quadratic residue modulo $p$, or of $p+1$
when $D$ is a non-quadratic residue, i.e. $p\equiv\pm1\!\pmod{o_{p}(a)}$
in either case. Thus $o_{p}(a)\le C\sqrt{x}$ implies that $p$ is
either included in the set of the first lemma if $o_{p}(a)\le y$,
or in the set of the second lemma if $o_{p}(a)\in(y,z]$. Choosing
the optimal value 
\begin{equation}
\alpha=(\log x)^{\delta}(\log\log x)^{3/2-\delta}
\end{equation}
yields the claimed value in the right hand side of both lemmas.

\subsection*{Proofs of the lemmas}
\begin{proof}[Proof of Lemma \ref{lem 1}]
 The following proof is an adaptation of an argument from Erd\H{o}s
and Murty \cite[Introduction]{EM}, in which only integral values
$a$ and a specific choice of $\alpha$ were considered.

For every $k\ge1$ define $A_{k}=\frac{a^{k}-a^{-k}}{\sqrt{D}}$.
Note that $A_{k}$ is always an integer, with $|A_{k}|<|a|^{k}$,
and that $o_{p}(a)=k$ implies $p\mid A_{k}$. Define 
\[
B_{y}=\prod_{k=1}^{\left\lfloor y\right\rfloor }A_{k},
\]
so that $o_{p}(a)\le y$ implies $p\mid B_{y}$. We now observe that
\begin{equation}
\log B_{y}=\sum_{k=1}^{\left\lfloor y\right\rfloor }\log A_{k}\le\sum_{k=1}^{\left\lfloor y\right\rfloor }k\log|a|\ll_{a}y^{2}=\frac{x}{\alpha},
\end{equation}
and on the other hand 
\begin{align}
\log B_{y} & \ge\sum_{p\mid B_{y}}\log p\ge\sum_{p\;:\;o_{p}(a)\le y}\log p\ge\sum_{\substack{\ensuremath{\sqrt{x}<p\le x}\\
o_{p}\left(a\right)\le y
}
}\log\sqrt{x}\\
 & =\tfrac{1}{2}\log x\cdot\#\{\sqrt{x}<p\le x:o_{p}(a)\le y\},\nonumber 
\end{align}
whence 
\begin{align}
\#\{p\le x:o_{p}(a)\le y\}\le\pi(\sqrt{x})+\frac{2\log B_{y}}{\log x}\ll_{a}\frac{2}{\alpha}\frac{x}{\log x}\ll\frac{\pi(x)}{\alpha}.
\end{align}
\end{proof}

\begin{proof}[Proof of Lemma \ref{lem 2}]
 This lemma is a direct application of results due to Ford \cite{ford2008distribution}.
We cite the relevant definitions and theorems. Ford's main object
of study is the function 
\[
H(x,y,z)=\#\{n\le x:\exists d\in(y,z],d\mid n\}.
\]
We are particularly interested in the specialized function 
\[
H(x,y,z;P_{\lambda})=\#\{n\le x:n\in P_{\lambda},\exists d\in(y,z],d\mid n\},
\]
where $P_{\lambda}=\{p+\lambda:p\text{ -- prime}\}$ is a set of shifted
primes, and more specifically only for $\lambda=\pm1$.

In \cite[Theorem 1]{ford2008distribution}, Ford estimates $H(x,y,z)$
for all possible choices of $y\le z\le x$. The relevant case for
our choice of $y,z$ is the third subcase of case (v), wherein $x,y,z$
are all large, $y\le\sqrt{x}$, and $z\in[2y,y^{2}]$, all of which
are immediately validated for our values, due to the constraint on
$\alpha$. For this case, the theorem states 
\begin{equation}
\frac{H(x,y,z)}{x}\asymp u^{\delta}\left(\log\tfrac{2}{u}\right)^{-3/2},\label{H est}
\end{equation}
where $u$ is the number satisfying $z=y^{1+u}$, or equivalently
\begin{align}
u=\frac{\log(z/y)}{\log y}=\frac{\log(C\sqrt{\alpha})}{\log(\sqrt{x/\alpha})}=\frac{\log\alpha+2\log C}{\log x-\log\alpha}\asymp\frac{\log\alpha}{\log x}=u_{0}.\label{u est}
\end{align}
In \cite[Theorem 6]{ford2008distribution}, Ford estimates $H(x,y,z;P_{\lambda})$,
for any fixed non-zero $\lambda$. The behaviour of the function is
determined by whether $z$ is greater or lesser than $y+(\log y)^{2/3}$.
The constraint on $\alpha$ implies $z\ge2y$, so we are certainly
in the regime of $z\ge y+(\log y)^{2/3}$, in which the theorem yields
\begin{equation}
H(x,y,z;P_{\lambda})\ll_{\lambda}\frac{H(x,y,z)}{\log x}.\label{lam est}
\end{equation}
Combining the estimates (\ref{H est}),(\ref{u est}),(\ref{lam est})
yields (\ref{lem 2 eq}), proving the lemma.
\end{proof}
\end{appendices}

\bibliographystyle{alpha}
\bibliography{markoff-refs}{}

\noindent Chen Meiri,\\
Department of Mathematics,\\
Technion - Israel Institute of Technology\\
Haifa 32000 Israel\\
chenm@tx.technion.ac.il\\

\noindent Doron Puder, \\
School of Mathematical Sciences,\\
Tel-Aviv University,\\
Tel-Aviv 69978 Israel\\
doronpuder@gmail.com\\

\noindent Dan Carmon, \\
School of Mathematical Sciences,\\
Tel-Aviv University,\\
Tel-Aviv 69978 Israel\\
dancarmo@post.tau.ac.il
\end{document}